\newcommand{\rmd}{\,\mathrm{d}}
\newcommand{\expectation}[1]{\mathbb{E}\left[{#1}\right]}
\newcommand{\converginP}{\overset{\mathbb{P}}{\rightarrow}}
\newcommand{\converginD}{\overset{\mathcal{D}}{\rightarrow}}
\newcommand{\R}{\mathbb{R}}
\newcommand{\N}{\mathbb{N}}
\newcommand{\PP}{\mathbb{P}}
\newcommand{\F}{\mathcal{F}}
\newcommand{\E}{\mathbb{E}}
\newcommand{\tr}{^\top}
\newcommand{\defeq}{\coloneqq}
\newcommand{\1}{\mathbbm{1}}
\theoremstyle{plain}
\newtheorem{theorem}{Theorem}[section]
\newtheorem{lemma}[theorem]{Lemma}
\newtheorem{proposition}[theorem]{Proposition}
\theoremstyle{remark}
\newtheorem{remark}[theorem]{Remark}
\newtheorem{definition}[theorem]{Definition}
\begin{document}
\title{Likelihood inference of the non-stationary Hawkes process with non-exponential kernel}
\author{Tsz-Kit Jeffrey Kwan, Feng Chen, and William Dunsmuir\\School of Mathematics and Statistics \\UNSW Sydney}
\maketitle
\begin{abstract}
  The Hawkes process is a popular point process model for event sequences that exhibit temporal clustering. The
  intensity process of a Hawkes process consists of two components, the baseline intensity and the accumulated
  excitation effect due to past events, with the latter specified via an excitation kernel. The classical Hawkes process
  assumes a constant baseline intensity and an exponential excitation kernel. This results in an intensity process that
  is Markovian, a fact that has been used extensively to establish the strong consistency and asymtpotic normality of
  maximum likelihood estimators or similar. However, these assumptions can be overly restrictive and unrealistic for
  modelling the many applications which require the baseline intensity to vary with time and the excitation kernel to
  have non-exponential decay. However, asymptotic properties of maximum likelihood inference for the parameters
  specifying the baseline intensity and the self-exciting decay under this setup is substantially more difficult since
  the resulting intensity process is non-Markovian. To overcome this challenge, we develop an approximation procedure to
  show the intensity process is asymptotically ergodic in a suitably defined sense. This allows for the identification
  of an ergodic limit to the likelihood function and its derivatives, as required for obtaining large sample inference
  under minimal regularity conditions.
\end{abstract}

\section{Introduction}\label{sec:Introduction}

Hawkes \cite{hawkes1971point,hawkes1971spectra,hawkes1972spectra,hawkes1974cluster} introduced a class of point processes with the essential
capability of modelling phenomena that exhibit temporal clustering, whereby the occurrence of an event increases the likelihood of additional
events to occur in the future. Such processes are termed self-exciting point processes and are often denoted by $N$. Given infinite history, the complete intensity function of $N_t$ is given by
\begin{equation} \label{eq:complete_intensity}
	\lambda_t = \nu + \int_{-\infty}^{t-}g(t-s)\rmd N_s
\end{equation}
where $\nu$ is known as the baseline intensity and $g$ the excitation kernel. 

Throughout, we write $g(t) = \eta \tilde{g}(t)$ where $\eta=\int_0^\infty g(t)\rmd t$ is called the branching ratio, and the normalized excitation kernel $\tilde{g}(t)=g(t)/\eta$ is a probability density function on $[0,\infty)$. To ensure the process does not explode over time, we also assume $\eta <1$.
 The second term of $\lambda(t)$ is responsible for the self-exciting nature of the point process. Traditionally, the baseline intensity is assumed to be a constant
and the excitation kernel is assumed to take the exponential form, $\tilde g(t) = \beta e^{-\beta t}$, $\beta > 0$. As a consequence of
the memorylessness of the exponential distribution, the bivariate process $(N_t, \lambda_t)$, $t\geq 0$ is Markovian, which simplifies likelihood calculations significantly \cite{oakes1975markovian}. In addition, this also allows for asymptotic procedures from the large literature of Markov
processes to be applied to the classical Hawkes process. For example, \cite{abergel2015Long} modelled the arrivals of trades in a limit
order book as a multivariate Hawkes process with an exponential kernel. They make use of the ergodic property of Markov processes to derive the diffusive behaviour of the price at large time scales from the microscopic properties of the model. In \cite{kwan2022alternative}, the authors approximated a
Hawkes process with time-varying baseline intensity and exponential kernel by a sequence of Markovian Hawkes processes and identified a deterministic
limit to the log-likelihood function. Subsequently, they established consistency and asymptotic normality of the maximum likelihood estimator (MLE).

While the classical assumptions imposed on the baseline intensity and excitation kernel are convenient for studying the
probabilistic properties of the process, they also substantially limit the applicability of the Hawkes process to many
real-life situations. For instance, in modelling intra-day trading data, it is well observed that trading intensity is
typically higher following market open and prior to market close and lower in the middle of the day
\cite{engle1998autoregressive}, thus suggesting a time-varying baseline intensity $\nu(t)$ may provide a more
appropriate fit \cite{chen2013inference, kwan2023asymptotic, stindl2019extreme}. Furthermore, the exponential kernel has
a maximum at $t=0$, suggesting the peak influence of a past event occurs immediately after it occurs. Thus, a Hawkes
process with an exponential kernel has limited usage in epidemiology, since it fails to reflect the latent period
between when an individual is infected and when they become infectious \cite{lauer2020incubation}. In such situations, a
Gamma or Weibull kernel with shape parameter $\alpha > 1$ may be more appropriate \cite{escobar2020hawkes}. In
high-frequency financial trading (for example, when modelling the limit order book) it has often been observed that the
decay kernel is not exponential but rather exhibits heavy tail decay, which is more appropriately modelled by a
heavy-tailed kernel such as the power law~\cite{bacry2015hawkes, gould2013limit}.

When the excitation kernel is non-exponential, the resulting Hawkes process is non-Markovian in general, which renders
standard techniques from ergodic theory of Markov processes inapplicable. As a result, it is substantially more
difficult to derive ergodicity of the intensity and asymptotic results for the likelihood. In \cite{kwan2024ergodic},
the authors established ergodicity of the intensity process of a Hawkes process with a constant baseline intensity with
a non-exponential excitation kernel. From the Poisson cluster process representation of the Hawkes process, the authors
established the ergodicity of the intensity process from the mixing property of Poisson cluster process. Consequently, a
deterministic limit to the likelihood function can be identified and consistency of the MLE was shown. In this paper, we
consider Hawkes processes where not only is the excitation kernel not necessarily exponential, but the baseline
intensity can also be time-varying.  Due to the non-stationary nature of the resulting process, an infill asymptotics
regime is adopted to investigate the asymptotic properties of the process, in which the process is observed over a fixed
interval as the total number of events/points observed increases. This is done by considering a sequence of Hawkes
processes indexed by $n$, with the intensity of the process tending to infinity with $n$. With the goal of establishing
consistency and asymptotic normality of the MLE in mind, we first establish asymptotic ergodicity of the intensity
process and its partial derivatives with respect to the model parameters. Once the asymptotic ergodicity of the
intensity process is established, a deterministic limit to the likelihood function can be identified, from which
consistency and asymptotic normality of the MLE follow.

This article is organized as follows. In Section~\ref{sec:setup-notation}, we formally introduce the parametric Hawkes
process with a time-varying baseline intensity and a general kernel function.  Section~\Ref{sec:main_results} is
dedicated to establishing asymptotic ergodicity of the intensity process and its partial derivatives with respect to the
parameters up to order 2 (Theorem~\ref{thm:asympergodicity}). An outline of the proof can also be found in
Section~\Ref{sec:main_results} with the technical details relegated to
Appendix~\Ref{sec:ErgodicTheory-prep_Proofs}. Section~\Ref{sec:parametricinference} is devoted to some applications of
Theorem~\ref{thm:asympergodicity} to show consistency (Theorem~\ref{thm:consistency}) and asymptotic normality
(Theorem~\ref{thm:asym-norm}) of the MLE. The proof of Theorem~\ref{thm:consistency} relies on the identification of a
deterministic limit of the normalised log-likelihood function, which is made possible by
Theorem~\ref{thm:asympergodicity}, and an application of the M-estimator master theorem
\cite[Theorem~5.7]{van2000asymptotic}. Asymptotic ergodicity of the intensity process and its derivatives is used,
again, in the proof of Theorem~\ref{thm:asym-norm} to identify the variance structure of MLE. Examples of
non-exponential kernels that satisfy the assumed conditions for which consistency and asymptotic normality are achieved
can be found in Section~\ref{sec:exampl-numer-simul}. Simulation results to verify consistency and asymptotic normality
are also provided in the same section. We conclude the paper in
Section~\ref{sec:conclusion}. Appendix~\ref{sec:preperation-proofs} is devoted to the statements and proofs of some
boundedness results that are used in the proofs of
Theorems~\ref{thm:asympergodicity}--\ref{thm:asym-norm}. Appendix~\ref{sec:ErgodicTheory-prep_Proofs} contains the
technical details of the proof of the results in Section~\ref{sec:main_results}.

\section{Setup and notation}\label{sec:setup-notation}
In practice, only events starting from some fixed time origin are observable. Furthermore, as mentioned before, we
studied the process under infill asymptotic as mentioned above. For these reasons, let us consider a sequence of Hawkes
processes with a time-varying baseline intensity and a general excitation kernel $\tilde{N}^n$ on the interval $[0,1]$
for $n = 1, 2, \ldots$. For each $n$, denote by $\tilde{\mathcal{F}}^n = (\tilde{\mathcal{F}}^n_t)_{t \in [0,1]}$ the
natural filtration induced by $\tilde{N}$, with $\tilde{\mathcal{F}}^n_t = \sigma\{ \tilde{N}^n_s: 0 \le s \le t
\}$. The $\tilde{\mathcal{F}}^n$-intensity of $\tilde{N}^n_t$ is
\begin{equation}\label{eq:intensity1}
    \tilde{\lambda}^n_t \defeq n\nu(t) + \int_{0}^{t-} n g(n(t-s))\rmd \tilde{N}^n_s, \quad t \in [0,1],
\end{equation}
where $\nu(\cdot)$ is the (normalised) baseline intensity function, and $g(\cdot) \ge 0$ is the unnormalized excitation
kernel such that $\int_0^\infty g(t) \rmd t < 1$. Furthermore, we assume the function $\nu(\cdot) = \nu(\cdot ; \theta)$
and $g(\cdot) = g(\cdot; \theta)$ are known up to a finite-dimensional parameter
$\theta \in \Theta \subset \mathbb{R}^d$. The likelihood of $\theta$ relative to the sample path of $\tilde{N}^n$ from
$0$ to $1$, up to some multiplicative constant, takes the form
\begin{equation*}
    \tilde{l}^n(\theta) = \exp\left( \int_0^1 \log (\tilde{\lambda}^n){t, \theta} \rmd \tilde{N}^n_t - \int_0^1 \tilde{\lambda}^n_{t, \theta} \rmd t\right)
\end{equation*}
where $\tilde{\lambda}^n_{t, \theta}$ is as in \eqref{eq:intensity1} with $\nu(\cdot)$ and $g(\cdot)$ replaced by $\nu(\cdot; \theta)$ and $g(\cdot;\theta)$ respectively. The normalised log-likelihood function is given by
\begin{equation*}
    \tilde{\mathcal{L}}^n(\theta) = \frac{1}{n}\tilde{l}^n(\theta) = \frac{1}{n} \int_0^1 \log (\tilde{\lambda}^n_{t,\theta}) \rmd \tilde{N}^n_t - \frac{1}{n} \int_0^1 \tilde{\lambda}^n_{t, \theta} \rmd t.
\end{equation*}
The MLE $\hat{\theta}^n$ is computed by maximising $\tilde{\mathcal{L}}^n(\theta)$ with respect to $\theta$ over $\Theta$.

An expanding time domain allows for a more convenient way to study the asymptotic properties of the process \cite{kwan2022alternative}. For this reason, we apply a time change to stretch the time domain of the process from $[0,1]$ to $[0, n]$. Let $N^n_t \defeq \tilde{N}^n_{t/n}$, $\mathcal{F}^n_t \defeq \sigma \{ N^n_s: 0 \le s \le t\} =\tilde{\mathcal{F}}^n_{t/n}$, for $t \in [0,n]$, $\lambda^n_{t, \theta}$ the intensity of $N^n$ relative to $\mathcal{F}^n$ where $\mathcal{F}^n \defeq (\mathcal{F}^n_t)_{t \in [0,n]}$ so that 
\begin{equation}\label{eq:1}
  \lambda^n_t \defeq \nu\left(\frac{t}{n}\right) + \int_{0}^{t-}g(t-s)\rmd N^n_s, \quad t \in [0,n].
\end{equation}
The likelihood of $\theta$ can be written, up to some multiplicative constant, by
\begin{equation}
  l^n(\theta) = \exp \left\{\int_{0}^{n}\log(\lambda^n_{t,\theta}) \rmd {N}^n_t - \int_{0}^{n}{\lambda}_{t, \theta}^n\rmd t
    \right\}, 
\end{equation}
where ${\lambda}^n_{t, \theta}$ is as in \eqref{eq:1} with $\nu(\cdot)$ and $g(\cdot)$ replaced by $\nu(\cdot; \theta)$
and $g(\cdot; \theta)$ respectively. The normalised log-likelihood, score function and the negative Hessian matrix are
respectively

\begin{align}
  {\mathcal{L}}^n(\theta) & {} = \frac{1}{n}\log({l}(\theta)) = \frac{1}{n}\int_{0}^{n} \log({\lambda}^n_{t, \theta}) \rmd
                            {N}^n_t - \frac{1}{n}\int_{0}^{n}\lambda^n_{t, \theta} \rmd t,\label{LogLikelihood}\\
  {S}^n(\theta) & {} = \partial_\theta {\mathcal{L}}^n(\theta) = \frac{1}{n} \int_{0}^{n} \frac{\partial_\theta {\lambda}^n_{t,
                  \theta}}{{\lambda}^n_{t,\theta}} \rmd {N}^n_t - \frac{1}{n}\int_{0}^{n} \partial_\theta {\lambda}^n_{t, \theta} \rmd t,\label{ScoreFunction} \\
  {I}^n(\theta) & {} = - \partial_\theta^{\otimes 2} {\mathcal{L}}^n(\theta) =
                  -\frac{1}{n}\int_{0}^{n}\left(\frac{\partial_\theta^{\otimes 2} {\lambda}^n_{t,\theta}}{{\lambda}^n_{t,\theta}} -
                  \left( \frac{\partial_\theta^{\otimes 2} {\lambda}^n_{t,\theta}}{{\lambda}^n_{t, \theta}}\right)^{\otimes 2}
                  \right) \rmd {N}^n_t + \frac{1}{n}\int_{0}^{n}\partial_\theta^{\otimes 2} {\lambda}^n_{t, \theta} \rmd t,\label{InformationMatrix}
\end{align}
where $\partial_\theta = \frac{\partial}{\partial \theta}$, and $\partial_\theta^{\otimes 2}=\frac{\partial}{\partial \theta} \frac{\partial}{\partial
  \theta}^\top$. Note that $\mathcal{L}^n(\theta) = \tilde{\mathcal{L}}^n(\theta) - \log(n)\tilde{N}^n_1/n$ and therefore, the MLE $\hat{\theta}^{n}$ maximizes both $\tilde{\mathcal{L}}^n(\theta)$ and $\mathcal{L}^n(\theta)$ over the parameter space $\Theta$.
  
Fundamental to proving consistency and asymptotic normality of the MLE, istto provide an appropriate definition of ergodicity of the stochastic processes generating the observations. A suitable definition is \cite[Definition~3.1]{clinet2017statistical}.
\begin{definition}\label{def:ergodicity}
Let $(E, \mathcal{B}(E))$ be a Borel space and $X : \Omega \times \mathbb{R} \to E$ be a stochastic process. $X$ is $C$-ergodic for (or ergodic with
respect to) some family of functions $C: E \to \mathbb{R}$ if for any $\psi \in C$, there exists a mapping $\pi: C \to \mathbb{R}$ such that
\begin{equation}
\frac{1}{T}\int_{0}^{T}\psi (X_s) \rmd s \overset{\mathbb{P}}{\rightarrow} \pi(\psi)
\end{equation}
as $T \to \infty$. Here, $\pi(\psi)$ is also known as the ergodic limit of $\psi(X_s)$. 
\end{definition}

Next, let us introduce a few terms and notation. If $x$ is a real number, a vector or a matrix, $\vert x \vert_p = (\sum_i \vert x_i \vert^p)^{1/p}$
and $\vert \cdot \vert = \vert \cdot \vert_1$. For a vector $x \in \mathbb{R}^d$, $x^{\otimes 2}$ denotes the Kronecker product of $x$ and
$x^{\otimes 2} = x x^\intercal \in \mathbb{R}^{d \times d}$. If $X$ is a random variable in $\mathbb{R}^d$,
$\Vert X \Vert_p = \expectation{\sum_i\vert X_i \vert^p}^{1/p}$. If $a_n$ and $b_n$ are two sequences that depend on $n$, $a_n = O(b_n)$ implies
$\frac{a_n}{b_n} \rightarrow k$ as $n \rightarrow \infty$ for some finite constant $k$. For some Borel space $(E, \mathcal{B}(E))$, denote
$C_b(E, \mathbb{R})$ the set of functions $\psi:E \to \mathbb{R}$ such that $\psi$ is continuous and bounded, and $C_p(E,\mathbb{R})$ the set of
functions $\psi:E \to \mathbb{R}$ such that $\psi$ is continuous on $E$ and of polynomial growth of order $p$ in $E$. That is, for any $p >0$,
$\psi (x)= O(\vert x \vert^p)$ for all $\psi \in C_p(E, \mathbb{R})$.  Finally, we let $D_p(E,\R)$ denote the set of functions $\psi$ with continuous
first derivatives on $E$ such that $\psi \in C_p(E, \R)$ and $\vert \nabla\psi\vert \in C_{p-1}(E, \R)$.

\section{Asymptotic Ergodicity}\label{sec:main_results}
In this section, we investigate the asymptotic properties of the intensity process $\lambda^n_{t, \theta}$ and its partial derivatives, up to order 2,
with respect to $\theta$ for the purpose of establishing consistency and asymptotic normality of the MLE and let
$Y^n_{t, \theta, \theta^\ast} \defeq (\lambda^n_{t, \theta}, \lambda^n_{t, \theta^\ast}, \partial_\theta \lambda^n_{t, \theta},
\partial_\theta^{\otimes 2}\lambda^n_{t, \theta})$ taking values in $E_1=\mathbb{R}_+\times \mathbb{R}_+ \times \mathbb{R}^d
\times \mathbb{R}^{d \times d}$. Note that Definition~\ref{def:ergodicity} is not appropriate in the current context since the
object of interest, $Y^n_{t, \theta, \theta^\ast}$, is a sequence of stochastic processes indexed by $n$, rather than a single process as in that definition.  For this reason, we use the following more general definition when discussing ergodicity of a sequence of stochastic processes.

\begin{definition}\label{def:asymptotic_ergodicity}
  For any Borel space $(E, \mathcal{B}(E))$, let $X^n: \Omega \times [0,n] \to E$ be a sequence of stochastic processes. $X^n$ is asymptotically
  $C$-ergodic if for any function $\psi \in C: E \to \R$ , there exists a mapping $\pi : C \to \R$ such that
	\begin{equation*}
		\frac{1}{n} \int_0^n \psi(X^n_s) \rmd s \overset{\mathbb{P}}{\rightarrow} \pi(\psi)
	\end{equation*}
	as $n \to \infty$.
\end{definition}
We now give the regularity conditions under which we will establish asymptotic ergodicity of $Y^n_{t, \theta, \theta^\ast}$.

\begin{enumerate}[label= {[C1]} ]
	\item \label{cond:C1} $\nu$ and $g$ depend on the parameter $\theta = (\theta_1, \ldots, \theta_d)\tr$ that belongs to a parameter space $\Theta \subset \R^d$ where $\Theta$ is compact and contains a nonempty open ball in $\R^d$ which in turn contains $\theta^\ast$, the true parameter. Furthermore, for any $(x, \theta) \in [0,1] \times \Theta$, there exists $\underline{\nu}, \bar{\nu}, \underline{\eta}$ and $\bar{\eta}$ such that 
	\begin{align*}
		0 < \underline{\nu} \le \nu(x; \theta)  \le \bar{\nu} < \infty,\\
		0 < \underline{\eta} \le \eta \le \bar{\eta} < 1.
	\end{align*}
\end{enumerate}

\begin{enumerate}[label= {[C2]} ]
	\item \label{cond:C2} For $j \in \{0, 1, 2\}$, the family of functions $\{ \partial_\theta^{\otimes j} \nu(x; \theta)\}_{\theta \in \Theta}$ is Lipschitz equicontinuous. That is, there exists some constant $K \ge 0$ such that for any $j \in \{0, 1, 2\}$ and $\theta \in \Theta$, 
 \begin{equation*}
     \Vert \partial_\theta^{\otimes j} \nu(x_1; \theta) - \partial_\theta^{\otimes j}\nu(x_2;\theta) \Vert \le K\vert x_1 - x_2\vert.
 \end{equation*}
\end{enumerate}

\begin{enumerate}[label= {[C3]}($p$) ]
\item \label{cond:C3} For all $\theta \in \Theta$ and $j \in \{0,1,2\}$, and some $p\geq1$,
  $\vert \partial_\theta^{\otimes j} g(t; \theta) \vert^p$ is integrable with respect to $t$.
\end{enumerate}

Note that \ref{cond:C3} is formulated to depend on $p \ge 1$, and we have \hyperref[cond:C3]{[C3]($q$)} implies
\ref{cond:C3} for all $q \ge p\geq1$.

\begin{theorem}\label{thm:asympergodicity}
  For any $p \in \N_+$ and $\epsilon >0$, under \ref{cond:C1}, \ref{cond:C2}, and
  \hyperref[cond:C3]{[C3]($p + \epsilon$)}, $Y^n_{t, \theta, \theta^\ast}$ is asymptotically $D_p(E_1,
  \R)$-ergodic. That is, there exists a mapping $\pi: \Theta \times \Theta \times D_p(E_1, \R) \to \R$ such that for any
  $\psi \in D_p(E_1, \R)$,
  \begin{equation}\label{eq:asympergodicityeq1}
    \left\vert \frac{1}{n} \int_0^n \psi(Y^n_{t, \theta, \theta^\ast}) \rmd t- \pi(\theta, \theta^\ast; \psi) \right\vert \overset{\PP}{\rightarrow} 0, \quad n \rightarrow \infty.
  \end{equation}
  Furthermore, there exists a mapping $\dot{\pi}: [0, 1] \times \Theta \times \Theta \times D_p(E_1, \R) \to \R$ with
  $\dot{\pi}(x, \theta, \theta^\ast; \psi) = \E[\psi(\dot{Y}^x_{t, \theta, \theta^\ast})]$ which is continuous in $x$
  and $\pi(\theta, \theta^\ast; \psi) = \int_0^1 \dot{\pi}(x, \theta, \theta^\ast; \psi) \rmd x$.
\end{theorem}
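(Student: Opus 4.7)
The plan is to exploit the separation of time scales in \eqref{eq:1}: the baseline $\nu(t/n)$ varies on the slow scale $n$ while the kernel $g(\cdot)$ acts on an $O(1)$ scale. Hence near any time $t=nx$, $x\in[0,1]$, the process should locally resemble a \emph{stationary} Hawkes process with constant baseline $\nu(x)$. The ergodic limit $\dot{\pi}(x,\theta,\theta^\ast;\psi)$ should be the expectation of $\psi$ under this stationary law, and Theorem~\ref{thm:asympergodicity} should follow from a Riemann-sum argument over $x\in[0,1]$.

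First, for each $x\in[0,1]$ I would introduce an auxiliary stationary Hawkes process $\dot{N}^x$ on $\R$ with constant baseline $\nu(x;\theta^\ast)$ and kernel $g(\cdot;\theta^\ast)$; existence and stationarity follow from the Hawkes--Oakes Poisson cluster representation since $\eta<1$ by \ref{cond:C1}. Let $\dot{Y}^x_{t,\theta,\theta^\ast}$ be the analogue of $Y^n_{t,\theta,\theta^\ast}$ built from $\dot{N}^x$, with the ``misspecified'' intensity $\dot{\lambda}^x_{t,\theta}$ computed by inserting the sample path of $\dot{N}^x$ into the formula indexed by $\theta$. The constant-baseline ergodic results of \cite{kwan2024ergodic} then apply and identify $\dot{\pi}(x,\theta,\theta^\ast;\psi) = \E[\psi(\dot{Y}^x_{0,\theta,\theta^\ast})]$; continuity in $x$ should come from the Lipschitz continuity of $\nu(\cdot;\theta)$ in \ref{cond:C2} coupled with continuous dependence of the stationary law on the baseline.

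Second, partition $[0,n]$ into $K_n$ subintervals $I_k=[a_{k-1},a_k]$ of length $n/K_n$, with $K_n\to\infty$ and $n/K_n\to\infty$. On $I_k$ the baseline $\nu(s/n;\theta)$ lies within $O(1/K_n)$ of the frozen value $\nu(x_k;\theta)$ for $x_k=k/K_n$, using \ref{cond:C2}. I would then couple $\lambda^n$ restricted to $I_k$ with $\dot{\lambda}^{x_k}$: the effect on $\lambda^n_t$ of events at times $s<a_{k-1}-L$ is bounded in $L^q$ by $\int_L^\infty g(u)\rmd u$, small by \hyperref[cond:C3]{[C3]}$(1)$, while on $[a_{k-1}-L,a_k]$ the two intensity processes obey nearly identical stochastic integral equations and may be built on a common probability space so that $\E|\lambda^n_t-\dot{\lambda}^{x_k}_t|^q\to 0$. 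The analogous statement for $\partial_\theta\lambda^n$ and $\partial_\theta^{\otimes 2}\lambda^n$ (which satisfy linear Hawkes-type integral equations driven by derivatives of $g$) explains the strengthening to \hyperref[cond:C3]{[C3]}$(p+\epsilon)$: the extra $\epsilon$ provides moment margin for the derivative components. Combining the coupling with ergodicity of $\dot{Y}^{x_k}$ on $I_k$ then yields, up to $o_{\PP}(1)$,
\begin{equation*}
\frac{1}{n}\int_0^n \psi(Y^n_{t,\theta,\theta^\ast})\rmd t \;\approx\; \frac{1}{K_n}\sum_{k=1}^{K_n}\dot{\pi}(x_k,\theta,\theta^\ast;\psi),
\end{equation*}
which tends to $\int_0^1\dot{\pi}(x,\theta,\theta^\ast;\psi)\rmd x$ as a Riemann sum.

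The main obstacle I anticipate is that $\psi\in D_p(E_1,\R)$ is only of polynomial growth, not bounded; transferring the $L^q$ coupling of intensities through $\psi$ requires uniform moment bounds $\sup_n\sup_{t\le n}\E[|Y^n_{t,\theta,\theta^\ast}|^q]$ of sufficiently high order in each of the four components of $Y^n$. These should be obtainable via Gronwall-type estimates applied to the integral equations governing the moments of $\lambda^n$ and its derivatives, but delivering them uniformly in $n$ and $\theta$, and then balancing the three tuning parameters $K_n$, $L$, and $q$ so that the baseline-freezing error, tail-truncation error, and within-block ergodic fluctuation vanish simultaneously, is where the bulk of the technical work will lie.
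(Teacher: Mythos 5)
Your proposal follows essentially the same route as the paper: split $[0,n]$ into $a_n$ blocks of diverging width $b_n=n/a_n$, freeze the baseline at $\nu(x)$ on each block, approximate the local process by a stationary constant-baseline Hawkes process whose ergodicity is imported from \cite{kwan2024ergodic}, establish uniform-in-$t,\theta$ moment bounds for $\lambda^n$ and its derivatives to transfer the coupling through polynomial-growth $\psi$, and aggregate the block limits $\dot{\pi}(x,\theta,\theta^\ast;\psi)=\E[\psi(\dot{Y}^x_{t,\theta,\theta^\ast})]$ as a Riemann sum using equicontinuity in $x$. The only cosmetic differences are that the paper interposes an intermediate zero-history, constant-baseline process $N^{x,n,c}$ between the local block process and the stationary one (so the coupling is done in two steps rather than one), and that the extra $\epsilon$ in \hyperref[cond:C3]{[C3]($p+\epsilon$)} is consumed by the ergodic theorem for $C_p$ test functions applied to the stationary process rather than specifically by the derivative components.
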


The proof of Theorem~\ref{thm:asympergodicity} is established through an approximation method analogous to those adopted in the works of \cite{clinet2018statistical}, as well as
\cite{kwan2022alternative}. To facilitate an intuitive understanding of the proof of Theorem~\ref{thm:asympergodicity}, let us first delineate some notation, followed by an outline of the proof.

The idea of the proof is to split the time domain of the original process into $a_n$ number of blocks of equal width. Let $a_n$ be a monotone sequence of positive integers that tends to infinity such that $a_n = o(n)$. The time domain $[0,n]$ is split into $a_n$ number of equal-lengthed blocks with length
$b_n = n/a_n$, and denote the starting time of the $i$-th block by $\tau^n_{i-1} = (i-1)b_n$. Furthermore, write $\zeta^{i,n}: t \mapsto t + \tau^n_{i-1}$ as the time-shift operator for the $i$-th
block. Then, by considering the point process $N^n$ on $((i-1)b_n, ib_n]$, we can construct another point process $N^{i,n}$ defined on $[0, b_n]$ by
\begin{equation*}
	N^{i,n}_t \defeq N^n_{\zeta^{i,n}_t} - N^n_{\tau^n_{i-1}}.
\end{equation*}
$N^{i,n}$ counts the points of $N^n$ on $((i-1)b_n, i b_n]$ and is adapted to the filtration $\mathcal{F}^{i,n}_t \defeq \mathcal{F}^n_{\zeta^{i,n}_t}$. $N^{i,n}_t$ has stochastic intensity
$\lambda^{i,n}_{t, \theta^\ast}$ where for any
$\theta$, we have
\begin{align*}
	\lambda^{i,n}_{t, \theta} \defeq {} & \nu(\frac{\tau^n_{i-1} + t}{n}; \theta) + \int^{\tau^n_{i-1} + t -}_0 g(\tau^n_{i-1} + t - s; \theta) \rmd N^n_s \\
	= {} & \nu(\frac{\tau^n_{i-1} + t}{n}; \theta) + \int^{\tau^n_{i - 1} -}_0 g(\tau^n_{i-1} + t - s; \theta) \rmd N^n_s + \int_0^{t-} g(t-s; \theta) \rmd N^{i,n}_s,\quad t \in [0, b_n].
\end{align*}
We can now express the point process $N^n$ as the superposition of each $N^{i,n}$ as
\begin{equation}
	N^n_t = \sum^{\lfloor t/b_n \rfloor}_{i=1} N^{i,n}_{b_n} + N^{\lfloor t/b_n \rfloor + 1, n}_{t - \lfloor t / b_n \rfloor b_n}.
\end{equation}
By utilising the smoothness conditions imposed on the baseline intensity under \ref{cond:C2}, the baseline intensity of
each $N^{i,n}$ approaches some constant as $n$ tends to infinity. Consequently, each $N^{i,n}$ can be approximated by a
stationary Hawkes process with a constant baseline intensity, which, in turn, converges in probability to some limiting
process that is ergodic in the sense of Definition~\ref{def:ergodicity}. To explain further, we require the following
definitions.

Note that we can define another point process $N^{x,n}$ such that for any fixed $n \in \N_+$ and
$i \in \{1, \cdots, a_n\}$, there exists $x \in [0,1]$ such that $N^{x,n} \equiv N^{i,n}$.  For any $x$ and $n$, denote
another time-shift operator by $\zeta^{x,n}: t \mapsto t + nx$ and let
$\mathcal{F}^{x,n} \defeq (\mathcal{F}^{x,n}_t)_{t \in [0,b_n]}$ with
$\mathcal{F}^{x,n}_t \defeq \mathcal{F}^n_{\zeta ^{x,n}_t}$ be the filtration to which $N^{x,n}$ is adapted. $N^{x,n}_t$
has stochastic intensity $\lambda^{x,n}_{t, \theta^\ast}$ where for any $\theta$, we have
\begin{equation*}
  \lambda^{x,n}_{t, \theta} \defeq \nu(x + \frac{t}{n};\theta) + \int_0^{xn-} g(xn + t -s; \theta) \rmd N^n_s + \int_0^{t-} g(t-s; \theta) \rmd N^{x,n}_s, \quad t \in [0,n].
\end{equation*}
Next, let us define two auxiliary point processes $N^{x,n,c}$ and $\dot{N}^{x,n,c}$. Denote $\mathcal{F}^{x,n,c} \defeq (\F^{x,n,c}_t)_{t \in \R_+}$ and
$\dot{\F}^{x,n,c} \defeq (\dot{F}^{x,n,c}_t)_{t\in\R}$ to be the filtrations to which $N^{x,n,c}$ and $\dot{N}^{x,n,c}$ are adapted to respectively, with
$\F^{x,n,c}_t \defeq \dot{\F}^{x,n,c}_t \defeq \F^n_{\zeta^{x,n}_t}$. $N^{x,n,c}$ and $\dot{N}^{x,n,c}$ have, respectively, intensities $\lambda^{x,n,c}_{t, \theta^\ast}$ and
$\dot{\lambda}^{x,n,c}_{t, \theta^\ast}$, where for any $\theta$, we have
\begin{align*}
  \lambda^{x,n,c}_{t, \theta} \defeq {} & \nu(x; \theta) + \int_0^{t-} g(t-s; \theta) \rmd N^{x,n,c}_s, \quad t \in \R_+;\\
  \dot{\lambda}^{x,n,c}_{t, \theta} \defeq {} & \nu(x; \theta) + \int_{-\infty}^{t-} g(t-s; \theta) \rmd \dot{N}^{x,n,c}_s, \quad t \in \R.
\end{align*}
Let
$Y^{x,n}_{t, \theta, \theta^\ast} \defeq (\lambda^{x,n}_{t, \theta}, \lambda^{x,n}_{t, \theta^\ast}, \partial_\theta
\lambda^{x,n}_{t, \theta}, \partial^{\otimes 2}_\theta \lambda^{x,n}_{t, \theta})$,
$Y^{x,n,c}_{t, \theta, \theta^\ast} \defeq (\lambda^{x,n,c}_{t, \theta}, \lambda^{x,n,c}_{t, \theta^\ast},
\partial_\theta \lambda^{x,n,c}_{t, \theta}, \partial^{\otimes 2}_\theta \lambda^{x,n,c}_{t, \theta})$ and
$\dot{Y}^{x,n,c}_{t, \theta, \theta^\ast} \defeq (\dot{\lambda}^{x,n,c}_{t, \theta}, \dot{\lambda}^{x,n,c}_{t,
  \theta^\ast}, \partial_\theta \dot{\lambda}^{x,n,c}_{t, \theta}, \partial^{\otimes 2}_\theta \dot{\lambda}^{x,n,c}_{t,
  \theta})$ all take values in $E_1 = \R_+ \times \R_+ \times \R^d \times \R^{d \times d}$. We can now present the proof
for Theorem~\ref{thm:asympergodicity}.

\begin{proof}[Proof of Theorem~\ref{thm:asympergodicity}]
  Note that
  \begin{equation*}
    \frac{1}{n} \int_0^n \psi(Y^n_{t, \theta, \theta^\ast}) \rmd t = \frac{1}{a_n} \sum_{i=1}^{a_n} \frac{1}{b_n} \int_0^{b_n} \psi(Y^{i,n}_{t, \theta, \theta^\ast}) \rmd t.
  \end{equation*}	
  Hence, if there exists a mapping $\dot{\pi}:[0,1] \times \Theta \times \Theta \times D_p(E_1, \R) \to \R$ such that
  \begin{equation}\label{eq:asympergodicityeq2}
    \frac{1}{a_n} \sum_{i=1}^{a_n} \left\vert \frac{1}{b_n} \int_0^{b_n} \psi (Y^{i,n}_{t, \theta, \theta^\ast}) \rmd t - \dot{\pi} (\frac{\tau^n_{i-1}}{n}, \theta, \theta^\ast; \psi) \right\vert \overset{\PP}{\rightarrow} 0, \quad  n \rightarrow \infty,
  \end{equation}
  and $\dot{\pi}(x, \theta, \theta^\ast; \psi)$ is integrable with respect to $x$, then \eqref{eq:asympergodicityeq1}
  holds with $\pi(\theta, \theta^\ast; \psi) = \int_0^1 \dot{\pi}(x, \theta, \theta^\ast; \psi)\rmd
  x$. \eqref{eq:asympergodicityeq2} is shown via the aforementioned approximation procedure, the likes of which have
  also been seen in the works by \cite{clinet2018statistical} and \cite{kwan2022alternative}. That is,
  \eqref{eq:asympergodicityeq2} holds if
  \begin{align}
    & \sup_{x \in [0,1]} \left \vert \frac{1}{b_n}\int_0^{b_n} \psi (Y^{x,n}_{t, \theta, \theta^\ast}) - \psi(Y^{x,n,c}_{t, \theta, \theta^\ast}) \rmd t \right\vert \overset{\PP}{\rightarrow} 0,  \label{eq:asympergodicityeq3} \\
    &  \sup_{x \in [0,1]} \left \vert \frac{1}{b_n}\int_0^{b_n} \psi (Y^{x,n,c}_{t, \theta, \theta^\ast}) - \psi(\dot{Y}^{x,n,c}_{t, \theta, \theta^\ast}) \rmd t \right\vert \overset{\PP}{\rightarrow} 0, \label{eq:asympergodicityeq4}\\
    \shortintertext{and}
    & \sup_{x \in [0,1]} \left \vert \frac{1}{b_n}\int_0^{b_n} \psi (\dot{Y}^{x,n,c}_{t, \theta, \theta^\ast}) \rmd t- \dot{\pi}(x, \theta, \theta^\ast; \psi) \right\vert \overset{\PP}{\rightarrow} 0. \label{eq:asympergodicityeq5}
\end{align}
The following lemmas ensure \eqref{eq:asympergodicityeq3} – \eqref{eq:asympergodicityeq5} hold, the proofs of which can
be found in Appendix~\ref{sec:ErgodicTheory-prep_Proofs}.
\begin{lemma}\label{lm:ergodicity-lm1} Under \ref{cond:C1}, \ref{cond:C2} and \hyperref[cond:C3]{[C3]($p$)},
\eqref{eq:asympergodicityeq3} holds.
\end{lemma}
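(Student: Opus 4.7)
The plan is to reduce the comparison of $\psi$-values to a comparison of intensity-process values via polynomial-growth control of $\nabla\psi$, then analyse the intensity difference through a Volterra integral inequality whose forcing is shown to vanish in the time-averaged sense, uniformly in $x$.

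Since $\psi\in D_p(E_1,\R)$, the mean value theorem together with $\nabla\psi\in C_{p-1}$ gives
\begin{equation*}
  |\psi(Y^{x,n}_{t,\theta,\theta^\ast})-\psi(Y^{x,n,c}_{t,\theta,\theta^\ast})|\le K\bigl(1+|Y^{x,n}_{t,\theta,\theta^\ast}|^{p-1}+|Y^{x,n,c}_{t,\theta,\theta^\ast}|^{p-1}\bigr)\,|Y^{x,n}_{t,\theta,\theta^\ast}-Y^{x,n,c}_{t,\theta,\theta^\ast}|,
\end{equation*}
so Cauchy--Schwarz bounds the target by $A_n(x)^{1/2}B_n(x)^{1/2}$, where $A_n(x)$ is the $t$-average of the squared growth factor and $B_n(x)$ is the $t$-average of $|Y^{x,n}_{t,\theta,\theta^\ast}-Y^{x,n,c}_{t,\theta,\theta^\ast}|^2$. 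Uniform moment bounds for $\lambda^{x,n}$, $\lambda^{x,n,c}$ and their $\theta$-derivatives up to order two, expected to be supplied by Appendix~\ref{sec:preperation-proofs} under \hyperref[cond:C3]{[C3]($p+\epsilon$)}, will make $A_n$ stochastically bounded uniformly in $(x,n)$, so it suffices to show $\sup_x\E[B_n(x)]\to0$.

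For the intensity component, the three-term decomposition
\begin{equation*}
  \lambda^{x,n}_{t,\theta}-\lambda^{x,n,c}_{t,\theta}=[\nu(x+t/n;\theta)-\nu(x;\theta)]+\int_0^{xn-}\!g(xn+t-s;\theta)\rmd N^n_s+\int_0^{t-}\!g(t-s;\theta)\rmd(N^{x,n}-N^{x,n,c})_s,
\end{equation*}
combined with a standard thinning coupling between $N^{x,n}$ and $N^{x,n,c}$ and the uniform mean bound $M:=\sup_{n,s}\E[\lambda^n_s]<\infty$, yields a Volterra inequality $e(t)\le f_n(x,t)+(g\ast e)(t)$ for $e(t):=\E|\lambda^{x,n}_{t,\theta}-\lambda^{x,n,c}_{t,\theta}|$. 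Since $\eta<1$ the resolvent kernel of $g$ lies in $L^1$ with norm $\eta/(1-\eta)$, so $b_n^{-1}\int_0^{b_n}e\,\rmd t\le(1-\eta)^{-1}b_n^{-1}\int_0^{b_n}f_n(x,t)\,\rmd t$. The forcing splits into a Lipschitz baseline piece bounded by $Kt/n$ under \ref{cond:C2} whose time-average is $O(1/a_n)\to 0$, and a history-tail piece bounded in expectation by $M[G(xn+t)-G(t)]$ where $G(t):=\int_0^t g$. Crucially, $G(xn+t)-G(t)$ is nondecreasing in $x$, so the $x$-supremum of its $t$-average is attained at $x=1$ and equals $b_n^{-1}\int_0^{b_n}[G(n+t)-G(t)]\,\rmd t\to\eta-\eta=0$, using that $G(n+t)\to\eta$ uniformly in $t\ge0$ and $b_n^{-1}\int_0^{b_n}G(t)\,\rmd t\to\eta$ by a standard averaging argument. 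The $\theta$-derivative components of $Y$ satisfy analogous Volterra equations whose forcings involve $\partial_\theta^{\otimes j}\nu$, $\partial_\theta^{\otimes j}g$ and already-controlled lower-order differences, so an induction on $j\in\{0,1,2\}$ under \ref{cond:C2} and \hyperref[cond:C3]{[C3]($p$)} propagates the same vanishing bound; the second-moment upgrade required for the Cauchy--Schwarz step is obtained by squaring the pointwise inequality and invoking Jensen, with the resulting higher polynomial growth absorbed into the slack $\epsilon$ of \hyperref[cond:C3]{[C3]($p+\epsilon$)}.

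The principal obstacle I anticipate is obtaining the second-moment (rather than first-moment) version of the Volterra estimate in a way that still passes uniformly in $x$: the thinning coupling gives a clean bound on $\E|\cdot|$, but for $\E|\cdot|^2$ one must additionally control the variance of the compensated stochastic integral $\int g\,\rmd(N^{x,n}-N^{x,n,c})$ via a Burkholder--Davis--Gundy or Itô isometry argument, and the monotonicity-in-$x$ trick (which trivialises uniformity for the first moment) needs to be replicated at the $L^2$ level. A robust fallback is the fact that only the $a_n$ grid points $x=\tau^n_{i-1}/n$ are required by the downstream identity \eqref{eq:asympergodicityeq2}, permitting a union-bound over a finite grid in place of a true continuous supremum over $[0,1]$.
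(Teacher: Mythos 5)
Your reduction via the mean value theorem and your Volterra inequality for $e(t)=\E\vert\lambda^{x,n}_{t,\theta}-\lambda^{x,n,c}_{t,\theta}\vert$ are essentially the same devices the paper uses (the latter is the content of Lemma~\ref{lm:prep-proofs-lm2}), but the proposal has two genuine gaps. First, and most seriously, the statement \eqref{eq:asympergodicityeq3} puts the supremum over $x\in[0,1]$ \emph{inside} the convergence in probability, and $\sup_x\E[B_n(x)]\to0$ does not deliver this: it gives convergence for each $x$ at a uniform rate, not convergence of the supremum. Your monotonicity-in-$x$ observation only controls the deterministic forcing $G(xn+t)-G(t)$, not the random quantity itself, and the grid fallback fails quantitatively: a union bound over the $a_n\to\infty$ grid points requires $\sup_x\PP(\cdot>\epsilon)=o(1/a_n)$, but the history-tail forcing decays at the rate of the tail of $G$, which under mere integrability of $g$ carries no rate, and the baseline piece is exactly $O(1/a_n)$, so a first-moment Markov bound gives only $O(1)$ after the union bound. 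The paper spends the entire second half of its proof on precisely this point: it constructs an almost-sure dominating process $\tilde N^{x,n,c}$ (driven by a stationary majorant $\dot N^{\bar\nu}$ with baseline $\bar\nu$, and with constant baseline $\bar\nu^n(x;\theta)=\sup_{x'\in[x,x+b_n/n]}\nu(x';\theta)$ after time $nx$) that sandwiches both $\lambda^{x,n}$ and $\lambda^{x,n,c}$ and whose excess tends to zero uniformly in $x$ almost surely. Some argument of this pathwise, uniform-in-$x$ type is indispensable and is absent from your plan.

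Second, your Cauchy--Schwarz pairing forces you to control $2(p-1)$-moments of the intensities and second moments of the difference, which for $p\ge3$ exceeds what \hyperref[cond:C3]{[C3]($p$)} and Lemma~\ref{lm:prep-proofs-lm1} provide; the paper instead applies H\"older with exponents $p$ and $p/(p-1)$, pairing $\Vert\nabla\psi(c^{x,n}_{t,\theta,\theta^\ast})\Vert_{p/(p-1)}$ (which needs only $p$-th moments since $\vert\nabla\psi\vert\in C_{p-1}$) against $\Vert Y^{x,n}_{t,\theta,\theta^\ast}-Y^{x,n,c}_{t,\theta,\theta^\ast}\Vert_p$, so the stated hypothesis suffices. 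The ``slack $\epsilon$'' you invoke cannot absorb a gap of $p-2$ integration orders. Relatedly, the higher-moment upgrade of the Volterra estimate that you flag as the principal obstacle is not resolved by ``squaring and Jensen''; the paper's Lemma~\ref{lm:prep-proofs-lm2} handles it by induction on $p$ using a BDG-type inequality (Lemma A.2 of Clinet and Yoshida) for the compensated integral, which is exactly the missing ingredient you correctly anticipated but did not supply.
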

\begin{lemma}\label{lm:ergodicity-lm2} Under \ref{cond:C1}, \ref{cond:C2} and \hyperref[cond:C3]{[C3]($p$)},
\eqref{eq:asympergodicityeq4} holds.
\end{lemma}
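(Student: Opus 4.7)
\textbf{Proof plan for Lemma~\ref{lm:ergodicity-lm2}.}
The processes $N^{x,n,c}$ and $\dot{N}^{x,n,c}$ share the same constant baseline $\nu(x;\theta^\ast)$ and kernel $g(\cdot;\theta^\ast)$; they differ only in that $\dot{N}^{x,n,c}$ is the two-sided stationary Hawkes process while $N^{x,n,c}$ starts empty at time $0$. The strategy is to couple them via the Poisson cluster construction so that the discrepancy between their intensities (and $\theta$-derivatives) is driven solely by events strictly before time $0$, and then transfer this estimate to $\psi$ using the polynomial growth $\psi\in D_p(E_1,\R)$.

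First I would realise $\dot{N}^{x,n,c}$ by placing immigrants on $\R$ as a Poisson process of rate $\nu(x;\theta^\ast)$ and triggering independent Hawkes clusters through the offspring intensity $g(\cdot;\theta^\ast)$; on the same probability space, define $N^{x,n,c}$ by retaining only the immigrants in $[0,\infty)$ and their full descendant trees. The residual $R^{x,n,c}\defeq\dot{N}^{x,n,c}-N^{x,n,c}$ then consists of descendants in $[0,\infty)$ of the pre-zero immigrants, and under this coupling
\begin{equation*}
\dot{\lambda}^{x,n,c}_{t,\theta}-\lambda^{x,n,c}_{t,\theta}=\int_{-\infty}^{0}g(t-s;\theta)\rmd\dot{N}^{x,n,c}_s+\int_{0}^{t-}g(t-s;\theta)\rmd R^{x,n,c}_s,
\end{equation*}
with analogous formulas for $\partial_\theta^{\otimes j}(\dot{\lambda}-\lambda)$, $j=1,2$, obtained by differentiating $g$ inside the integrals. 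Stationarity of $\dot{N}^{x,n,c}$, a renewal-type bound on the mean rate of $R^{x,n,c}$ (the descendant density $\Psi=\sum_{k\ge 1}g^{*k}$ satisfying $\int\Psi<\infty$), and the uniform controls in \ref{cond:C1} combine to yield
\begin{equation*}
\sup_{x\in[0,1],\,\theta\in\Theta}\E\bigl|\partial_\theta^{\otimes j}\dot{\lambda}^{x,n,c}_{t,\theta}-\partial_\theta^{\otimes j}\lambda^{x,n,c}_{t,\theta}\bigr|\le h_j(t),\qquad j=0,1,2,
\end{equation*}
where $h_j(t)\to 0$ as $t\to\infty$, driven by the tails $\int_t^\infty|\partial_\theta^{\otimes j}g(u;\theta)|\rmd u$ available through \hyperref[cond:C3]{[C3]($p$)}.

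Second, since $\psi\in D_p(E_1,\R)$ has $|\nabla\psi|\in C_{p-1}(E_1,\R)$, the mean value inequality gives
\begin{equation*}
|\psi(Y^{x,n,c}_{t,\theta,\theta^\ast})-\psi(\dot{Y}^{x,n,c}_{t,\theta,\theta^\ast})|\le K\bigl(1+|Y^{x,n,c}_t|^{p-1}+|\dot{Y}^{x,n,c}_t|^{p-1}\bigr)\bigl|Y^{x,n,c}_t-\dot{Y}^{x,n,c}_t\bigr|.
\end{equation*}
Combining this with the uniform-in-$(x,\theta)$ moment bounds for the intensities and their derivatives (established in Appendix~\ref{sec:preperation-proofs} under \ref{cond:C1} and \hyperref[cond:C3]{[C3]($p$)}) and H\"older's inequality yields $\E|\psi(Y^{x,n,c}_t)-\psi(\dot{Y}^{x,n,c}_t)|\le\tilde{h}(t)$ with $\tilde{h}(t)\to 0$ uniformly in $(x,\theta)$. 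Since $b_n=n/a_n\to\infty$, a Ces\`aro argument gives $\tfrac{1}{b_n}\int_0^{b_n}\tilde{h}(t)\rmd t\to 0$, and Markov's inequality applied after bringing $\sup_x$ inside the time integral completes the proof.

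The main technical obstacle is the uniformity in $x\in[0,1]$. Pointwise in $x$ the above delivers $L^1$-convergence to $0$; upgrading to $\sup_x$ requires either a joint coupling of $\{(N^{x,n,c},\dot{N}^{x,n,c})\}_{x\in[0,1]}$---built, for instance, on a single space--time Poisson random measure whose $x$-slice is the rate-$\nu(x;\theta^\ast)$ Poisson process---or an equicontinuity argument that leverages the Lipschitz dependence on $x$ of $\nu(x;\theta^\ast)$ from \ref{cond:C2}. A secondary piece of delicate bookkeeping is tuning the H\"older exponents in the second step so that the polynomial growth order $p-1$ of $\nabla\psi$ is absorbed by the moment order available for $\partial_\theta^{\otimes 2}\lambda^{x,n,c}_{t,\theta}$, which is itself limited by the integrability index in \hyperref[cond:C3]{[C3]($p$)}. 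Once the cluster coupling is in place, however, every remaining step is standard.
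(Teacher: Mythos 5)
Your overall architecture is the same as the paper's: control the discrepancy between the intensities (and their $\theta$-derivatives) of $N^{x,n,c}$ and $\dot{N}^{x,n,c}$, transfer it to $\psi$ via the mean value theorem and H\"older's inequality using $\vert\nabla\psi\vert\in C_{p-1}(E_1,\R)$, average in time, and then upgrade to uniformity in $x$. (The paper proves Lemma~\ref{lm:ergodicity-lm2} by adapting the proof of Lemma~\ref{lm:ergodicity-lm1}, whose ingredients are exactly Lemma~\ref{lm:prep-proofs-lm2}(ii) plus this mean-value/H\"older step plus an almost-sure domination argument for the supremum over $x$.) Your cluster-representation coupling, in which the residual $R^{x,n,c}$ consists of the post-zero descendants of pre-zero immigrants, is a legitimate and arguably more transparent alternative to the paper's Poisson-embedding and convolution-inequality route for bounding the first moment of the discrepancy.

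The genuine gap is in the moment order of your discrepancy estimate. Your renewal-type bound delivers only $\E\vert\partial_\theta^{\otimes j}\dot{\lambda}^{x,n,c}_{t,\theta}-\partial_\theta^{\otimes j}\lambda^{x,n,c}_{t,\theta}\vert\le h_j(t)$, i.e.\ an $\mathbb{L}_1$ estimate. But the H\"older pairing you invoke requires $\Vert\nabla\psi(c^{x,n}_{t,\theta,\theta^\ast})\Vert_{p/(p-1)}\cdot\Vert Y^{x,n,c}_{t,\theta,\theta^\ast}-\dot{Y}^{x,n,c}_{t,\theta,\theta^\ast}\Vert_p$, and since $\vert\nabla\psi\vert$ grows like order $p-1$ while the intensities are only bounded in $\mathbb{L}_p$ under \hyperref[cond:C3]{[C3]($p$)}, the exponent $p/(p-1)$ cannot be lowered; interpolation from $\mathbb{L}_1$ against a bounded $\mathbb{L}_p$ norm only gives $\mathbb{L}_r$-decay for $r<p$, which is not enough. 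You need the full $\mathbb{L}_p$-decay of the difference, which is precisely what the paper's Lemma~\ref{lm:prep-proofs-lm2}(ii) supplies via an induction on $p$ using the BDG-type inequality (Lemma~A.2 of Clinet--Yoshida) together with H\"older bounds on the convolution terms; your first-moment renewal bound does not substitute for that induction. You flag this as ``delicate bookkeeping'' but the bookkeeping cannot be tuned away --- an additional moment argument is required. Separately, the uniformity in $x$ is named as the main obstacle but not actually executed; the paper resolves it (in the proof of Lemma~\ref{lm:ergodicity-lm1}, to be adapted here) by dominating both $\lambda^{x,n,c}_{\cdot,\theta}$ and $\dot{\lambda}^{x,n,c}_{\cdot,\theta}$ almost surely by an auxiliary process built from a single stationary Hawkes process with baseline $\bar{\nu}$, so that the supremum over $x$ is controlled by a quantity independent of $x$. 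Either of your two suggested routes could plausibly work, but as written this part of the lemma remains unproven.
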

\begin{lemma}\label{lm:ergodicity-lm3} Under \ref{cond:C1}, \ref{cond:C2} and \hyperref[cond:C3]{[C3]($p+\epsilon$)},
$\dot{Y}^{x,n,c}_{t, \theta, \theta^\ast}$ is $C_p(E_1, \R)$-ergodic and \eqref{eq:asympergodicityeq5}
holds. Furthermore, we have $\dot{\pi}(x, \theta, \theta^\ast; \psi) = \E[\psi(\dot{Y}^{x,n,c}_{t,\theta,
\theta^\ast})]$.
\end{lemma}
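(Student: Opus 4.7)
The plan is to exploit the fact that for each fixed $x \in [0,1]$, the process $\dot{N}^{x,n,c}$ is a stationary Hawkes process on $\R$ with constant baseline intensity $\nu(x;\theta^\ast)$ and excitation kernel $g(\cdot;\theta^\ast)$; its distribution does not depend on $n$, the superscript $n$ only recording the ambient filtration. Stationarity together with $\eta < 1$ from \ref{cond:C1} ensures existence of the stationary Poisson-cluster representation, which is the main tool I would rely on. Classical $C_p$-ergodicity of $\dot{Y}^{x,n,c}_{t,\theta,\theta^\ast}$ at fixed $x$, in the sense of Definition~\ref{def:ergodicity} with $T \to \infty$, then follows from the mixing property of Poisson cluster processes as established in \cite{kwan2024ergodic}, adapted in a routine way to the vector of intensity derivatives $\partial_\theta^{\otimes j}\dot{\lambda}^{x,n,c}_{t,\theta}$, $j\in\{0,1,2\}$. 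By stationarity the ergodic limit must equal $\E[\psi(\dot{Y}^{x,n,c}_{0,\theta,\theta^\ast})]$, which identifies $\dot{\pi}(x,\theta,\theta^\ast;\psi)$ and handles the first and third assertions of the lemma.

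The substantive work is to upgrade pointwise-in-$x$ ergodicity to the uniform statement \eqref{eq:asympergodicityeq5}, and I would do this in three steps. First, I would derive uniform-in-$x$ moment bounds of the form $\sup_{x \in [0,1]}\E\bigl[|\dot{Y}^{x,n,c}_{0,\theta,\theta^\ast}|^{p+\epsilon}\bigr] < \infty$, using the cluster representation together with \ref{cond:C1} to bound $\nu$ and $\eta$ uniformly and \hyperref[cond:C3]{[C3]($p+\epsilon$)} to control $L^{p+\epsilon}$ norms of $g$ and its $\theta$-derivatives; these feed into the standard stationary-Hawkes moment identities for convolutions of a kernel against the point process. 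Second, I would show continuity in $x$ of $\dot{\pi}(x,\theta,\theta^\ast;\psi) = \E[\psi(\dot{Y}^{x,n,c}_{0,\theta,\theta^\ast})]$ by coupling: construct $\dot{N}^{x,n,c}$ and $\dot{N}^{x',n,c}$ on a common probability space through a joint cluster representation whose immigrant process has rate $\min(\nu(x;\theta^\ast),\nu(x';\theta^\ast))$ plus an independent immigrant stream of rate $|\nu(x;\theta^\ast)-\nu(x';\theta^\ast)| = O(|x-x'|)$ by \ref{cond:C2}, bound the resulting $L^1$ discrepancy between the $\dot{Y}$-coordinates, and combine with the moment bound and $\psi \in D_p(E_1,\R)$ (Lipschitz on compacts plus polynomial growth) to conclude continuity. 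Third, I would promote pointwise to uniform convergence by choosing a grid $0=x_0<x_1<\cdots<x_K=1$ of mesh $\delta$, applying pointwise $C_p$-ergodicity at the $x_k$, bounding the oscillation of $b_n^{-1}\int_0^{b_n}\psi(\dot{Y}^{x,n,c}_{t,\theta,\theta^\ast})\,\rmd t$ over each cell $[x_k,x_{k+1}]$ via the same coupling and the uniform moment bound, and letting $\delta \to 0$ after $n \to \infty$.

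The main obstacle is uniform-in-$x$ tail control of $\dot{Y}^{x,n,c}$ when $\psi$ is only of polynomial growth of order $p$. Cluster-mixing gives convergence directly only for $\psi \in C_b$; upgrading to $\psi \in C_p$ requires uniform integrability of $\{\psi(\dot{Y}^{x,n,c}_t)\}_{x,t,n}$, which is precisely what the extra margin in \hyperref[cond:C3]{[C3]($p+\epsilon$)} buys. A technically delicate point is that the last coordinate $\partial_\theta^{\otimes 2}\dot{\lambda}^{x,n,c}_{t,\theta}$ is a signed, potentially large functional of the cluster, so obtaining the $L^{p+\epsilon}$ bound uniformly in $x$ needs a careful stationary-Hawkes moment computation, and the coupling step for continuity of $\dot{\pi}$ must track how a small change in the baseline propagates through two $\theta$-derivatives of $g$; both can be handled by the boundedness results stated in Appendix~\ref{sec:preperation-proofs}. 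Once these ingredients are assembled, the covering argument delivers \eqref{eq:asympergodicityeq5} and the identification of $\dot{\pi}$ is immediate from stationarity.
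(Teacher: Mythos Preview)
Your proposal is correct and follows essentially the same route as the paper: invoke \cite{kwan2024ergodic} for pointwise $C_p$-ergodicity of the stationary process at each fixed $x$, identify $\dot{\pi}(x,\theta,\theta^\ast;\psi)=\E[\psi(\dot{Y}^{x}_{t,\theta,\theta^\ast})]$ by stationarity, and upgrade to uniform convergence in $x$ via a compactness-plus-continuity argument backed by the uniform $L^{p+\epsilon}$ moment bounds of Appendix~\ref{sec:preperation-proofs}. The only cosmetic difference is that the paper packages the uniformity step as ``equicontinuity in $\mathbb{L}_1$ of $x\mapsto \frac{1}{T}\int_0^T\psi(\dot{Y}^x_{t,\theta,\theta^\ast})\,\rmd t$ plus pointwise convergence implies uniform convergence'', obtaining the equicontinuity from the mean value theorem on $\psi\in D_p$, H\"older's inequality, and the Lipschitz-in-$x$ estimate $\sup_t\|\dot{Y}^x_{t,\theta,\theta^\ast}-\dot{Y}^{x'}_{t,\theta,\theta^\ast}\|_p\le K_p|x-x'|$ via Poisson embedding (an adaptation of Lemma~\ref{lm:prep-proofs-lm2}), whereas you spell out the same content as an explicit grid-plus-coupling argument using the cluster representation; the two are equivalent.
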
 To complete the proof, it remains to show $\dot{\pi}(x, \theta, \theta^\ast; \psi)$ is integrable in
$x$. This is achieved by the following lemma, the proof of which is relegated to
Appendix~\ref{sec:ErgodicTheory-prep_Proofs}.
 \begin{lemma}\label{lm:ergodicity-lm4} Under \ref{cond:C1}, \ref{cond:C2} and \hyperref[cond:C3]{[C3]($p+\epsilon$)},
for any $\psi \in D_p(E_1, \R)$, $\dot{\pi}(x, \theta, \theta^\ast; \psi)$ is equicontinuous in $x$ when regarded as
family of functions of $\theta, \theta^\ast$ and $\psi$.
 \end{lemma}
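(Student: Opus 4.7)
Since $\dot{N}^{x,n,c}$ is a stationary Hawkes process on $\R$ with constant baseline $\nu(x;\theta^\ast)$ and kernel $g(\cdot;\theta^\ast)$, the vector $\dot{Y}^{x,n,c}_{t,\theta,\theta^\ast}$ is stationary in $t$, so
\[
\dot{\pi}(x,\theta,\theta^\ast;\psi)=\E\bigl[\psi\bigl(\dot{Y}^{x,n,c}_{0,\theta,\theta^\ast}\bigr)\bigr]
\]
and $x$ enters only through the baseline quantities $\nu(x;\theta),\nu(x;\theta^\ast)$ and $\partial_\theta^{\otimes k}\nu(x;\theta)$ for $k=0,1,2$. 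The plan is to couple $\dot{N}^{x_1,n,c}$ with $\dot{N}^{x_2,n,c}$ on a common probability space so that $\dot{Y}^{x_1,n,c}_{0,\theta,\theta^\ast}-\dot{Y}^{x_2,n,c}_{0,\theta,\theta^\ast}$ vanishes in a suitable $L^q$ sense as $|x_1-x_2|\to 0$ uniformly in $(\theta,\theta^\ast)\in\Theta^2$, and then combine this with the mean-value bound afforded by $\psi\in D_p(E_1,\R)$ to conclude.

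For the coupling step I would use the Hawkes--Oakes Poisson cluster representation. On a common probability space, let $M$ be a Poisson immigration process on $\R$ of rate $\bar{\nu}$ equipped with i.i.d.\ marks $U_\tau\sim\mathrm{Uniform}[0,1]$, and, independently of $M$, a family of i.i.d.\ branching trees with displacements governed by $\tilde{g}(\cdot;\theta^\ast)$ and (subcritically, since $\eta\le\bar{\eta}<1$) finite-moment cluster sizes. Thin $M$ into $\dot{N}^{x_j,n,c}$ by retaining immigrants $\tau$ with $U_\tau\le\nu(x_j;\theta^\ast)/\bar{\nu}$ together with their clusters. Without loss of generality, $\nu(x_1;\theta^\ast)\le\nu(x_2;\theta^\ast)$, so the difference $\dot{N}^{x_2,n,c}-\dot{N}^{x_1,n,c}$ is itself a stationary Hawkes process with immigration rate $\mu_{12}=\nu(x_2;\theta^\ast)-\nu(x_1;\theta^\ast)\le K|x_1-x_2|$ (by \ref{cond:C2}) and kernel $g(\cdot;\theta^\ast)$. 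Each component of $\dot{Y}^{x_j,n,c}_{0,\theta,\theta^\ast}$ decomposes as an offset $\partial_\theta^{\otimes k}\nu(x_j;\theta)$ plus an integral $\int_{-\infty}^{0-}\partial_\theta^{\otimes k}g(-s;\theta)\,\rmd\dot{N}^{x_j,n,c}_s$; the offset differences are $O(|x_1-x_2|)$ by \ref{cond:C2}, and the integral differences equal $\int_{-\infty}^{0-}\partial_\theta^{\otimes k}g(-s;\theta)\,\rmd(\dot{N}^{x_2,n,c}-\dot{N}^{x_1,n,c})_s$. Applying subcritical branching moment estimates to the low-rate cluster process, together with \hyperref[cond:C3]{[C3]($p+\epsilon$)} for the integrability of $\partial_\theta^{\otimes k}g(\cdot;\theta)$, should then yield, for some $\alpha>0$ and every $q\ge 1$,
\[
\bigl\|\dot{Y}^{x_1,n,c}_{0,\theta,\theta^\ast}-\dot{Y}^{x_2,n,c}_{0,\theta,\theta^\ast}\bigr\|_{q}\le C_{q}\,|x_1-x_2|^{\alpha},
\]
where $C_q$ depends only on the global constants $\bar{\nu},\bar{\eta},K$ and on the $L^{p+\epsilon}$ norms of $\partial_\theta^{\otimes k}g(\cdot;\theta)$, hence uniformly in $(\theta,\theta^\ast)\in\Theta^2$.

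To pass from this bound to equicontinuity of $\dot{\pi}(\cdot,\theta,\theta^\ast;\psi)$, I would use that $\psi\in D_p(E_1,\R)$ implies $|\nabla\psi|\in C_{p-1}(E_1,\R)$, so the mean value inequality gives
\[
|\psi(y_1)-\psi(y_2)|\le K_\psi\bigl(1+|y_1|^{p-1}+|y_2|^{p-1}\bigr)\,|y_1-y_2|.
\]
Setting $y_j=\dot{Y}^{x_j,n,c}_{0,\theta,\theta^\ast}$, taking expectations, and applying Hölder's inequality with conjugate exponents $(r,r')$ chosen to accommodate the power $p-1$, yields
\[
\bigl|\dot{\pi}(x_1,\theta,\theta^\ast;\psi)-\dot{\pi}(x_2,\theta,\theta^\ast;\psi)\bigr|\le K_\psi\Bigl(1+\textstyle\sum_{j}\|\dot{Y}^{x_j,n,c}_{0,\theta,\theta^\ast}\|_{r(p-1)}^{p-1}\Bigr)\bigl\|\dot{Y}^{x_1,n,c}_{0,\theta,\theta^\ast}-\dot{Y}^{x_2,n,c}_{0,\theta,\theta^\ast}\bigr\|_{r'}.
\]
The first factor is uniformly bounded in $(x,\theta,\theta^\ast)\in[0,1]\times\Theta^2$ by the moment bounds on stationary-Hawkes intensities already assembled in Appendix~\ref{sec:preperation-proofs} (valid under \ref{cond:C1} and \hyperref[cond:C3]{[C3]($p+\epsilon$)}), while the second factor is $\le C|x_1-x_2|^{\alpha}$ by the coupling estimate. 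Combining these gives a modulus of continuity $\omega_\psi(h)=O(h^{\alpha})$ that is independent of $(\theta,\theta^\ast)\in\Theta^2$ and of $\psi$ once one fixes the polynomial-growth constant $K_\psi$, which is precisely the claimed equicontinuity.

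The principal obstacle is the $L^q$-estimate in the coupling step: controlling moments of a stochastic integral of $\partial_\theta^{\otimes k}g$ against a low-intensity stationary Hawkes process requires careful use of subcritical branching moment inequalities, and the extra $\epsilon$ in \hyperref[cond:C3]{[C3]($p+\epsilon$)} is exactly the slack needed for Hölder to close the argument while keeping the bound uniform in $\theta\in\Theta$. Once that estimate is in hand, the remaining steps---polynomial growth of $\nabla\psi$ and uniform intensity-moment bounds from Appendix~\ref{sec:preperation-proofs}---assemble mechanically into the desired equicontinuity.
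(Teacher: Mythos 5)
Your proposal is correct in substance and ends exactly where the paper ends --- a mean-value bound $|\psi(y_1)-\psi(y_2)|\le K_\psi(1+|y_1|^{p-1}+|y_2|^{p-1})|y_1-y_2|$ combined with H\"older's inequality, uniform moment bounds on $\dot{Y}^{x}_{t,\theta,\theta^\ast}$ from Lemma~\ref{lm:prep-proofs-lm1}, and a quantitative modulus of continuity for $x\mapsto \dot{Y}^{x}_{t,\theta,\theta^\ast}$ in $\mathbb{L}_p$. Where you genuinely diverge is in how that last estimate is produced. The paper obtains $\sup_{t\in\R}\Vert \dot{Y}^x_{t,\theta,\theta^\ast}-\dot{Y}^{x'}_{t,\theta,\theta^\ast}\Vert_p\le K_p|x-x'|$ by adapting the induction-on-$p$ convolution-inequality machinery of Lemmas~\ref{lm:prep-proofs-lm1} and \ref{lm:prep-proofs-lm2} (iterating the bound $f\le \epsilon + g\ast f$ for the $\mathbb{L}_p$ distance between the two intensities, using the Lipschitz continuity of $\nu$ in $x$ under \ref{cond:C2}); you instead build an explicit Poisson-cluster thinning coupling of $\dot{N}^{x_1,n,c}$ and $\dot{N}^{x_2,n,c}$ on a common space, so that their difference is itself a stationary subcritical Hawkes process with immigration rate $O(|x_1-x_2|)$, and then read off the moment bound from that low-rate process. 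Your coupling is more constructive and makes the mechanism transparent; its cost is that the moment estimate for a point-process integral against a rate-$\rho$ process scales like $\rho^{1/q}$ in $\mathbb{L}_q$, so you only get a H\"older modulus $|x_1-x_2|^\alpha$ with $\alpha$ depending on $q$ rather than the Lipschitz rate the paper asserts --- which you correctly flag, and which is immaterial for equicontinuity. The paper's route buys reuse of already-proved lemmas and a cleaner stated rate; yours buys independence from those appendix arguments. One caveat applies equally to both proofs: uniformity over all of $D_p(E_1,\R)$ requires fixing the polynomial-growth constant of $\nabla\psi$ (equivalently, working within a bounded subfamily of $D_p$), as you note.
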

\end{proof}

\begin{remark}
  Note that \eqref{eq:asympergodicityeq3} – \eqref{eq:asympergodicityeq5} outlines the aforementioned approximation
  procedure. The idea is to identify two pseudo processes, $Y^{x,n,c}_{t, \theta, \theta^\ast}$ and
  $\dot{Y}^{x,n,c}_{t, \theta, \theta^\ast}$, such that the integral
  $\frac{1}{n} \int_0^n \psi(Y^n_{t, \theta, \theta^\ast})\rmd t$ on each block can be asymptotically approximated by
  $\frac{1}{b_n}\int_0^{b_n} \psi(Y^{x,n,c}_{t, \theta, \theta^\ast}) \rmd t$ (represented by
  \eqref{eq:asympergodicityeq2}) and $\frac{1}{b_n}\int_0^{b_n} \psi(Y^{x,n,c}_{t, \theta, \theta^\ast}) \rmd t$ by
  $\frac{1}{b_n}\int_0^{b_n} \psi(\dot{Y}^{x,n,c}_{t, \theta, \theta^\ast}) \rmd t$ (represented by
  \eqref{eq:asympergodicityeq3}) uniformly in $x$. Furthermore, suppose $\dot{Y}^{x,n,c}_{t, \theta, \theta^\ast}$ is
  $D_p(E_1, \R)$-ergodic (represented by \eqref{eq:asympergodicityeq4}), then by imposing continuity type conditions on
  $\nu$, the ergodic limit on each block can be aggregated such that
  $\pi(\theta, \theta^\ast; \psi) = \int_0^1 \dot{\pi}(x, \theta, \theta^\ast; \psi)\rmd x$ exists as the asymptotic
  ergodic limit of $\frac{1}{n}\int_0^n \psi(Y^n_{t, \theta, \theta^\ast}) \rmd t$ (represented by
  \eqref{eq:asympergodicityeq5}).
\end{remark}

As we shall see in the next section, if we strengthen the convergence in \eqref{eq:asympergodicityeq1} to be uniform in
$\theta$ and along with identifiability type conditions, we can establish consistency and asymptotic normality of the
MLE. Generalising the convergence in \eqref{eq:asympergodicityeq1} to be uniform in $\theta$ is achieved by imposing the
following conditions regarding the continuity of the intensity processes in $\theta$.
\begin{enumerate}[label= {[C4]}($p$) ]
\item \label{cond:C4}
  \begin{enumerate}
  \item [{(i)}] For any $j \in \{0, 1, 2\}$, the family of functions
    $\{\partial_\theta^{\otimes j} \nu(x; \theta)\}_{x \in [0,1]}$ is equicontinuous in $\theta$.
  \item [{(ii)}]For any $j \in \{0, 1, 2\}$, the family of functions
    $\{ \partial_\theta^{\otimes j}g (t; \theta)\}_{t \in \R_+}$ is equicontinuous in $\theta$. Furthermore, for any
    $\theta \in \Theta$, $\vert \partial_\theta^{\otimes 3} g(t; \theta) \vert^p$ is integrable with respect to $t$.
	\end{enumerate}
\end{enumerate}

The following proposition generalises the convergence in \eqref{eq:asympergodicityeq1} to be uniform in $\theta$, the
proof of which is relegated to Appendix~\ref{sec:ErgodicTheory-prep_Proofs}.
\begin{proposition}\label{prop:uniform-convergence}
  In addition to the conditions stated in Theorem~\ref{thm:asympergodicity}, under \hyperref[cond:43]{[C4]($p$)}, the
  convergence in \eqref{eq:asympergodicityeq1} is uniform in $\theta$.
\end{proposition}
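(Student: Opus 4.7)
The plan is to establish the uniform convergence by the standard three-ingredient argument: pointwise convergence (already supplied by Theorem~\ref{thm:asympergodicity}), stochastic equicontinuity of $\theta \mapsto \frac{1}{n}\int_0^n \psi(Y^n_{t,\theta,\theta^\ast})\rmd t$, and continuity of $\theta \mapsto \pi(\theta,\theta^\ast;\psi)$, combined with the compactness of $\Theta$ from \ref{cond:C1}. Once these three ingredients are in place, covering $\Theta$ with finitely many $\delta$-balls and invoking pointwise convergence at the centres yields uniform convergence in $\theta$ in a routine manner.

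For continuity of the limit, we would use the representation $\pi(\theta,\theta^\ast;\psi)=\int_0^1 \dot{\pi}(x,\theta,\theta^\ast;\psi)\rmd x$ with $\dot{\pi}(x,\theta,\theta^\ast;\psi)=\E[\psi(\dot{Y}^{x,n,c}_{t,\theta,\theta^\ast})]$ from Lemma~\ref{lm:ergodicity-lm3}. The equicontinuity in $\theta$ of $\partial_\theta^{\otimes j}\nu$ and $\partial_\theta^{\otimes j} g$ under \ref{cond:C4} implies that $\dot{Y}^{x,n,c}_{t,\theta,\theta^\ast}$ depends continuously on $\theta$ in a suitably uniform way, and combining the polynomial growth of $\psi\in D_p(E_1,\R)$ with standard moment bounds on $\dot{\lambda}^{x,n,c}$ and its derivatives permits dominated convergence to transfer continuity through the expectation and the subsequent integral over $x$.

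The main step is the stochastic equicontinuity. We would show that for any $\varepsilon>0$,
\begin{equation*}
  \lim_{\delta \downarrow 0}\limsup_{n\to\infty}\E\left[\sup_{|\theta-\theta'|<\delta}\left| \frac{1}{n}\int_0^n [\psi(Y^n_{t,\theta,\theta^\ast})-\psi(Y^n_{t,\theta',\theta^\ast})]\rmd t\right|\right]=0.
\end{equation*}
By the mean value theorem applied to $\psi\in D_p(E_1,\R)$, the integrand is dominated by $\sup_{\zeta\in\Theta}|\nabla\psi(Y^n_{t,\zeta,\theta^\ast})|\cdot|\partial_\theta Y^n_{t,\zeta,\theta^\ast}|\cdot\delta$. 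Since $Y^n$ already contains $\partial_\theta^{\otimes 2}\lambda^n$, its $\theta$-derivative involves $\partial_\theta^{\otimes 3}\lambda^n$; the integrability requirement on $|\partial_\theta^{\otimes 3} g|^p$ in \ref{cond:C4}, together with the boundedness results in Appendix~\ref{sec:preperation-proofs}, should yield uniform-in-$n$ and uniform-in-$\theta$ $L^r$-bounds on $\partial_\theta^{\otimes j}\lambda^n_{t,\theta,\theta^\ast}$ for $j\in\{0,1,2,3\}$ and sufficiently large $r$. H\"older's inequality and the polynomial growth of $\nabla\psi$ then control the full expression by $C\delta$ uniformly in $n$, with $C$ free of $\theta$, and Markov's inequality converts the moment bound into the required probability bound.

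The hard part will be extending the moment-bound machinery of Appendix~\ref{sec:preperation-proofs} one order of differentiation beyond what was needed for Theorem~\ref{thm:asympergodicity}, namely to $\partial_\theta^{\otimes 3}\lambda^n$; this is precisely where the new integrability hypothesis on $|\partial_\theta^{\otimes 3} g|^p$ in \ref{cond:C4} is designed to apply. Once these third-derivative moment bounds are in hand, the combination of continuity of the limit, stochastic equicontinuity, and the compactness of $\Theta$ assembles the uniform convergence without further difficulty.
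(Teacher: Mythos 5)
Your proposal is correct and follows essentially the same route as the paper: a finite $\delta$-net of the compact $\Theta$, pointwise convergence from Theorem~\ref{thm:asympergodicity} at the net centres, equicontinuity in $\theta$ of the integral functional (resting on the mean value theorem, H\"older's inequality, and the moment bounds that require the integrability of $\vert\partial_\theta^{\otimes 3}g\vert^p$ from \hyperref[cond:C4]{[C4]($p$)}), and continuity in $\theta$ of the limit $\pi(\theta,\theta^\ast;\psi)=\int_0^1\dot{\pi}(x,\theta,\theta^\ast;\psi)\rmd x$ obtained by pushing the $\mathbb{L}_p$-equicontinuity of $\partial_\theta^{\otimes j}\dot{\lambda}^x_{t,\theta}$ through $\nabla\psi$. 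You also correctly identify that the third-order derivative of the kernel is exactly where the extra hypothesis in \ref{cond:C4} is consumed, which matches the paper's argument.
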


\section{Parametric Inference}\label{sec:parametricinference}
In this section, we make use of the results established in Section~\ref{sec:main_results} to show the MLE is consistent
and asymptotically normal. The normalised log-likelihood, score function and negative Hessian matrix of $\theta$
relative to the sample path of $N^n$ $[0, n]$ are given respectively in \eqref{LogLikelihood}, \eqref{ScoreFunction} and
\eqref{InformationMatrix}. To establish consistency and asymptotic normality of the MLE $\hat{\theta}^n$, $\theta^\ast$
needs to be identifiable. For this reason, we require the following conditions.
\begin{enumerate}[label= {[C5]} ]
\item \label{cond:C5}
  \begin{enumerate}
  \item [{(i)}] The parameters for $\nu(\cdot)$ and $g(\cdot)$ are separable. That is,
    $\theta = (\theta_\nu\tr, \theta_g\tr)\tr \in \Theta_\nu \times \Theta_g = \Theta \subset \R^{d_\nu + d_g}$ where
    $\theta_\nu = (\theta_{\nu, 1}, \ldots, \theta_{\nu, d_\nu})\tr$ and
    $\theta_g =(\eta, \theta_{g,2}, \ldots, \theta_{g, d_g})\tr$, with $\nu(\cdot; \theta_\nu)$ depending on
    $\theta_\nu$ and $g(\cdot; \theta_g)$ depending only on $\theta_g$. In particular, we have
    $g(t; \theta_g) = \eta \tilde{g}(t; \tilde{\theta}_g)$ where $\tilde{g}$ is a proper density function of a
    non-negative random variable that depends on $\tilde{\theta}_g \in \R^{d_g-1}$ only. Furthermore, we assume that
    $\Theta_\nu$ and $\Theta_g$ are both compact and contain a nonempty open ball in $\R^{d_v}$ and $\R^{d_g}$
    respectively, which in turn contains $\theta_\nu^\ast$ and $\theta_g^\ast$ respectively, with
    $(\theta_\nu^{\ast \top}, \theta_g^{\ast\top})\tr = \theta^\ast$ being the true parameter.
  \item [{(ii)}] $\nu(x; \theta_\nu) = \nu(x; \theta_\nu')$ for all $x \in [0,1]$ only when $\theta_\nu =
    \theta_\nu'$. Furthermore, for all $\theta_\nu \in \Theta_\nu$,
    $\sum_{i=1}^{d_\nu} z_{\nu, i}\partial_{\theta_{\nu, i}}\nu(x; \theta_\nu) = 0$ for all $x \in [0,1]$ only when
    $z_{\nu, i} = 0$ for all $i \in \{1, \ldots, d_\nu\}$.
  \item [{(iii)}] $\tilde{g}(t; \tilde{\theta}_g) = \tilde{g}(t; \tilde{\theta}_g')$ for all $t \in \R_+$ only when
    $\tilde{\theta}_g = \tilde{\theta}_g'$. Furthermore, for all $\theta_g \in \Theta_g$,
    $\sum_{i=1}^{d_g} z_{g, i} \partial_{\theta_{g, i}}g(t; \theta_g) = 0$ for all $t \in \R_+$ only when $z_{g, i}=0$
    for all $i \in \{1, \ldots, d_g\}$.
  \end{enumerate}
\end{enumerate}
\subsection{Consistency}\label{subsec:consistency}
We can now present the theorem on consistency of the MLE.
\begin{theorem}\label{thm:consistency}
  Under \ref{cond:C1}, \ref{cond:C2}, \hyperref[cond:C3]{[C3](2)}, \hyperref[cond:C4]{[C4](2)} and \ref{cond:C5}, $\hat{\theta}^n$ exists as a maximiser to the normalised log-likelihood function
  $\mathcal{L}^n(\theta)$ and $\hat{\theta}^n \rightarrow \theta^\ast$ in probability.
\end{theorem}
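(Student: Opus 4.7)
The plan is to apply the M-estimator master theorem \cite[Theorem~5.7]{van2000asymptotic}, which requires three ingredients: compactness of $\Theta$ (given by \ref{cond:C1}), uniform-in-$\theta$ convergence in probability of $\mathcal{L}^n$ to a deterministic limit $\mathcal{L}$, and unique maximisation of that limit at $\theta^\ast$. Existence of a maximiser $\hat{\theta}^n$ follows separately from continuity of $\mathcal{L}^n(\cdot)$ (ensured by \hyperref[cond:C4]{[C4](2)}) combined with compactness of $\Theta$.

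For the uniform limit, I would decompose
\begin{equation*}
  \mathcal{L}^n(\theta) = \frac{1}{n}\int_0^n \bigl[\lambda^n_{t,\theta^\ast} \log \lambda^n_{t,\theta} - \lambda^n_{t,\theta}\bigr]\rmd t + \frac{1}{n}\int_0^n \log \lambda^n_{t,\theta} \rmd M^n_t,
\end{equation*}
where $M^n_t \defeq N^n_t - \int_0^t \lambda^n_{s,\theta^\ast}\rmd s$ is the $\mathcal{F}^n$-compensated martingale of $N^n$. The first integrand equals $\psi(Y^n_{t,\theta,\theta^\ast})$ for $\psi(y_1, y_2, y_3, y_4) = y_2 \log y_1 - y_1$; thanks to the uniform lower bound $\lambda^n_{t,\theta} \geq \underline{\nu}$ from \ref{cond:C1}, this $\psi$ effectively lies in $D_2(E_1, \R)$, so Theorem~\ref{thm:asympergodicity} together with Proposition~\ref{prop:uniform-convergence} (applied with $p = 2$ under \hyperref[cond:C3]{[C3](2)} and \hyperref[cond:C4]{[C4](2)}) delivers uniform-in-$\theta$ convergence in probability of the first term to $\mathcal{L}(\theta) \defeq \pi(\theta, \theta^\ast; \psi)$. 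The martingale term vanishes uniformly via the quadratic-variation estimate $\mathrm{Var}(n^{-1}\int_0^n \log \lambda^n_{t,\theta} \rmd M^n_t) = n^{-2}\E[\int_0^n (\log \lambda^n_{t,\theta})^2 \lambda^n_{t,\theta^\ast}\rmd t] = O(n^{-1})$, using the intensity moment bounds in Appendix~\ref{sec:preperation-proofs}, together with a chaining argument on the compact set $\Theta$ based on the equicontinuity from \hyperref[cond:C4]{[C4](2)}.

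The heart of the argument is unique maximisation. By Lemma~\ref{lm:ergodicity-lm3},
\begin{equation*}
  \mathcal{L}(\theta) = \int_0^1 \E\bigl[\dot{\lambda}^{x,n,c}_{0,\theta^\ast} \log \dot{\lambda}^{x,n,c}_{0,\theta} - \dot{\lambda}^{x,n,c}_{0,\theta}\bigr]\rmd x,
\end{equation*}
and the elementary inequality $a \log b - b \leq a \log a - a$ for $a, b > 0$ (equality iff $a = b$) applied pointwise inside the expectation gives $\mathcal{L}(\theta) \leq \mathcal{L}(\theta^\ast)$, with equality forcing $\dot{\lambda}^{x,n,c}_{t,\theta} = \dot{\lambda}^{x,n,c}_{t,\theta^\ast}$ almost surely for Lebesgue-a.e.\ $x \in [0,1]$ and, by stationarity, all $t \in \R$. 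Since both intensities are driven by the same point process $\dot{N}^{x,n,c}$, by parameter separability \ref{cond:C5}(i) this identity reads
\begin{equation*}
    \nu(x;\theta_\nu) - \nu(x;\theta_\nu^\ast) = -\int_{-\infty}^{t-}\bigl[g(t-s;\theta_g) - g(t-s;\theta_g^\ast)\bigr]\rmd \dot{N}^{x,n,c}_s \quad \text{a.s.}
\end{equation*}
The left-hand side is deterministic while the right-hand side has strictly positive conditional variance as a functional of $t$ unless $g(\cdot;\theta_g) \equiv g(\cdot;\theta_g^\ast)$ on $\R_+$; hence both sides vanish, so $g(\cdot;\theta_g) \equiv g(\cdot;\theta_g^\ast)$ and $\nu(x;\theta_\nu) = \nu(x;\theta_\nu^\ast)$ for a.e.\ $x$ (and all $x$ by continuity from \ref{cond:C2}). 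The identifiability assumptions \ref{cond:C5}(ii)--(iii) then give $\theta = \theta^\ast$. I expect this identification step — converting almost-sure equality of two random stationary Hawkes intensities driven by a common point process into equality of the driving kernels and baselines — to be the main technical obstacle, cleanly handled by comparing the second factorial moment measures or Laplace functionals of the two stationary intensity processes. With unique maximisation established, the M-estimator master theorem delivers $\hat{\theta}^n \converginP \theta^\ast$.
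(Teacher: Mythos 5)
Your proposal is correct and follows the same overall architecture as the paper's proof: existence of $\hat{\theta}^n$ from compactness and continuity, uniform convergence of the drift part of $\mathcal{L}^n$ via Theorem~\ref{thm:asympergodicity} and Proposition~\ref{prop:uniform-convergence}, negligibility of the martingale part, identification of $\theta^\ast$ through the stationary limit intensities, and the M-estimator master theorem. Two sub-arguments differ genuinely from the paper. First, for the martingale term the paper centres at $\theta^\ast$ (working with $\int_0^n\log(\lambda^n_{t,\theta}/\lambda^n_{t,\theta^\ast})\rmd\bar{N}^n_t$) and uses Lenglart's inequality, whereas you bound the predictable variation of $\int_0^n\log\lambda^n_{t,\theta}\rmd M^n_t$ by Chebyshev; both rest on the same $\mathbb{L}_p$ bounds from Lemma~\ref{lm:prep-proofs-lm1} and the same chaining argument for uniformity, so this is an immaterial variation. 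Second, and more substantively, your identification step differs: the paper proceeds in two stages, first taking expectations of the a.s.\ identity $\dot{\lambda}^x_{t,\theta}=\dot{\lambda}^x_{t,\theta^\ast}$ to obtain $\nu(x;\theta_\nu)-\nu(x;\theta_\nu^\ast)=\frac{\nu(x;\theta_\nu^\ast)}{1-\eta^\ast}(\eta^\ast-\eta)$, which by separability pins down $\eta=\eta^\ast$ and then $\theta_\nu=\theta_\nu^\ast$ via \ref{cond:C5}-(ii), and only then handles $\tilde{\theta}_g$ by the support argument that a stationary Hawkes process places a point in any nonempty open interval with positive probability. You instead propose a one-shot variance argument: a deterministic quantity equal a.s.\ to $\int_{-\infty}^{t-}h(t-s)\rmd\dot{N}^x_s$ with $h=g(\cdot;\theta_g)-g(\cdot;\theta_g^\ast)$ forces that integral to have zero variance, and since $\mathrm{Var}(\int h\,\rmd\dot{N}^x)\ge\frac{\nu(x;\theta_\nu^\ast)}{1-\eta^\ast}\int_0^\infty h^2$ (the diagonal term of the second factorial moment measure, the Hawkes covariance density being nonnegative), this gives $h\equiv0$ directly, whence $\eta=\eta^\ast$, $\tilde{\theta}_g=\tilde{\theta}_g^\ast$ and then $\theta_\nu=\theta_\nu^\ast$. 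This is sound and arguably cleaner, but you have left the key variance lower bound as an assertion; making it precise requires exactly the moment-measure computation you defer to, or else falling back on the paper's support argument. Two small points to tidy up: the master theorem needs $\theta^\ast$ to be a \emph{well-separated} maximum, which on compact $\Theta$ follows from uniqueness once you note the limit $\pi(\theta,\theta^\ast;\psi)$ is continuous in $\theta$ (Lemma~\ref{lm:ergodicity-lm4} and the proof of Proposition~\ref{prop:uniform-convergence} supply this); and your invocation of Theorem~\ref{thm:asympergodicity} with $p=2$ formally requires \hyperref[cond:C3]{[C3]($2+\epsilon$)} rather than \hyperref[cond:C3]{[C3]($2$)}, though this matches the paper's own slightly loose usage and is not a defect of your argument specifically.
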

\begin{proof}
  To establish consistency of the MLE, we make use of the ergodic property established in
  Theorem~\ref{thm:asympergodicity} to, first, show that there exists some mapping $\pi: \Theta \times \Theta \to \R$
  such that
  \begin{equation}\label{eq:consistency-proof-eq1}
    \sup_{\theta \in \Theta} \vert \mathcal{L}^n(\theta) - \mathcal{L}^n(\theta^\ast) - \pi(\theta, \theta^\ast) \vert \overset{\mathbb{P}}{\rightarrow} 0.
  \end{equation}
  Next, establish $\theta = \theta^\ast$ exists as a well-separated point of maximum to $\pi(\theta, \theta^\ast)$, in
  the sense that for any $\epsilon >0$, there exists an open ball $B(\theta^\ast, \epsilon)$ centred at $\theta^\ast$
  with radius $\epsilon$ such that
  $\sup_{\theta \in \Theta \backslash B(\theta^\ast, \epsilon)} \pi(\theta, \theta^\ast) < \pi (\theta^\ast,
  \theta^\ast)$. Then, by the M-estimator master theorem (\cite{van2000asymptotic}, Theorem 5.7), we have
  $\hat{\theta}^n \rightarrow \theta^\ast$ in probability and we can conclude that the MLE $\hat{\theta}^n$ is a
  consistent estimator.

  To show \eqref{eq:consistency-proof-eq1}, note that
  $\vert \mathcal{L}^n(\theta) - \mathcal{L}^n(\theta^\ast) - \pi(\theta, \theta^\ast)\vert$ is bounded by
  \begin{equation}
    \label{eq:consistency-proof-eq2}
    \vert M^n(\theta, \theta^\ast) \vert + \vert L^n(\theta, \theta^\ast) - L^n(\theta^\ast, \theta^\ast) - \pi(\theta, \theta^\ast) \vert
  \end{equation}
  where
  $M^n(\theta, \theta^\ast) = \frac{1}{n} \int_0^n \log (\lambda^n_{t, \theta} / \lambda^n_{t, \theta^\ast}) \rmd
  \bar{N}^n_t$ with $\bar{N}^n_t = N^n_t - \int_0^t \lambda^n_{s, \theta^\ast} \rmd t$. By
  Theorem~\ref{thm:asympergodicity} and Proposition~\ref{prop:uniform-convergence}, under the assumed conditions, the
  second term of \ref{eq:consistency-proof-eq2} converges to 0 in probability uniformly in $\theta$. As for
  $M^n(\theta, \theta^\ast)$, note that
  $\frac{1}{n}\int_0^t \log(\lambda^n_{t, \theta}/ \lambda^n_{t, \theta^\ast}) \rmd \bar{N}^n_s$ is a local square
  integrable martingale. Hence, by Lenglart's inequality (\cite{andersen2012statistical}, II.5.2.1, page 86), for any
  $\tilde{\epsilon}$ and $\delta > 0$, we have
  \begin{equation}
    \label{eq:consistency-proof-eq3}
    \mathbb{P} \left( \sup_{t \in [0,n]} \left\vert \frac{1}{n} \int_0^t \log \left( \frac{\lambda^n_{s, \theta}}{\lambda^n_{s, \theta^\ast}} \right) \rmd
        \bar{N}^n_s \right\vert > \tilde{\epsilon} \right) \le \frac{\delta}{\tilde{\epsilon}^2} + \mathbb{P} \left( \frac{1}{n^2} \int_0^n \log \left(
        \frac{\lambda^n_{s, \theta}}{\lambda^n_{s, \theta^\ast}}\right)^2 \lambda^n_{s, \theta^\ast} \rmd s > \delta \right).
  \end{equation}
  By Lemma~\ref{lm:prep-proofs-lm1}, the second term of the above bound converges to 0. Furthermore, since
  $\tilde{\epsilon}$ and $\delta$ are arbitrary, we deduce that $M^n(\theta, \theta^\ast$ converges to 0 in
  probability. To see that this convergence is uniform in $\theta$, by the assumed continuity conditions under
  \hyperref[cond:C4]{[C4](2)}, for any $\tilde{\epsilon} >0$, there exists $\delta >0$ such that for all
  $\theta, \theta' \in \Theta$, $\vert \theta - \theta' \vert <\delta$ implies
  $ \vert M^n(\theta, \theta^\ast) - M^n(\theta', \theta^\ast)\vert \le \tilde{\epsilon}$ with probability tending to
  1. As $\Theta$ is compact, thus by following the proof of Proposition~\ref{prop:uniform-convergence},
  $M^n(\theta, \theta^\ast) \rightarrow 0$ in probability uniformly in $\theta$.

  It remains to show that $\theta^\ast$ is a well-separated point of maximum of $\pi(\theta, \theta^\ast)$. By
  Theorem~\ref{thm:asympergodicity} and Lemma~\ref{lm:ergodicity-lm3},
  $\pi(\theta, \theta^\ast) = \int_0^1 \dot{\pi}(x, \theta, \theta^\ast) \rmd x$. Noting that
  $\dot{\pi}(x, \theta, \theta^\ast)=0$ for all $x \in [0,1]$. Thus, if we can show that for any $\tilde{\epsilon} > 0$,
  \begin{equation}
    \label{eq:consistency-proof-eq4}
    \sup_{\theta \in \Theta : \vert \theta - \theta^\ast \vert > \tilde{\epsilon}} \dot{\pi} (x, \theta, \theta^\ast) <0
  \end{equation}
  for all $x \in [0,1]$, then $\theta^\ast$ exists as a well-separated point of maximum of $\pi(\theta,\theta^\ast)$. Since
  $\pi(x, \theta, \theta^\ast) = \E[\log(\dot{\lambda}^x_{t, \theta}/\dot{\lambda}^x_{t, \theta^\ast}) \dot{\lambda}^x_{t, \theta} - \dot{\lambda}^x_{t, \theta} + \dot{\lambda}^x_{t, \theta^\ast}]$,
  we need only to show for all $x \in [0,1]$ and $t \in \R$, $\dot{\lambda}^x_{t, \theta} = \dot{\lambda}^x_{t, \theta'}$ almost surely if and only if $\theta = \theta'$. Note that when
  $\theta = \theta'$, it trivally holds that $\dot{\lambda}^x_{t, \theta} = \dot{\lambda}^x_{t, \theta'}$ almost surely. Hence, we need only to show
  $\dot{\lambda}^x_{t, \theta} = \dot{\lambda}^x_{t, \theta'}$ almost surely implies $\theta = \theta'$. A quick calculation shows that
  $\E[\dot{\lambda}^x_{t, \theta^\ast}] = \nu(x; \theta^\ast_\nu)/(1-\eta^\ast)$. Furthermore, assuming $\dot{\lambda}^x_{t, \theta} = \dot{\lambda}^x_{t, \theta'}$ almost surely, we have
  $\E[\dot{\lambda}^x_Pt, \theta] = \E[\dot{\lambda}^x_{t, \theta'}]$. By Fubini's theorem, we have
\begin{align*}
  \nu(x; \theta_\nu) - \nu(x; \theta'_\nu) = {} & \int_{-\infty}^{t-} \big( g(t-s; \theta_g') - g(t-s; \theta_g) \big) \E[\dot{\lambda}^x_{s, \theta^\ast}]\rmd s \\
  = {} & \frac{\nu(x; \theta_\nu^\ast)}{1-\eta^\ast} (\eta' - \eta).
\end{align*}
Since the parameters are separable, $\nu(x; \theta_\nu) - \nu(x - \theta_\nu')$ is dependent of $\eta$ and $\eta'$. Furthermore, under \ref{cond:C5}, there exists $\theta''_\nu \in \Theta_\nu$ such
that $\nu(x; \theta_\nu) - \nu(x; \theta'_\nu) \ne \nu(x; \theta_\nu) - \nu(x;\theta''_\nu)$, thus we deduce that $\eta = \eta'$. This also implies $\nu(x; \theta_\nu) = \nu(x; \theta'_\nu)$ and
therefore, as a consequence of \ref{cond:C5}-(ii), $\theta_\nu = \theta'_\nu$. We have shown that $\theta_\nu = \theta'_\nu$ and $\eta = \eta'$. Hence if
$\dot{\lambda}^x_{t, \theta} = \dot{\lambda}^x_{t, \theta'}$ almost surely, we also have, almost surely,
\begin{equation}
  \label{eq:consistency-proof-eq5}
  0 = \int_{-\infty}^{t-} \eta \big( \tilde{g}(t-s; \tilde{\theta}'_g) - \tilde{g}(t-s;\tilde{\theta}_g)\big) \rmd \dot{N}^x_s.
\end{equation}
Let $f(t) = \tilde{g}(t-s; \tilde{\theta}'_g) - \tilde{g}(t-s;\tilde{\theta}_g) = 0$. Note that if $f(t) = 0$ for all $t \in \R_+$, then in view of \hyperref[cond:C5]{[C5]-(iii)}, we have
$\tilde{\theta}_g = \tilde{\theta}'_g$. This can be shown via contradiction. Suppose there exists $t \in \R_+$ such that $f(t) \ne 0$. Since $f$ is continuous, there exist $\underline{t}$,
$\bar{t} \in \R_+$ such that $(\underline{t}, \bar{t}) \ne \emptyset$ and $f(t) \ne 0$ for all $t \in (\underline{t}, \bar{t})$. By noting that the probability of observing at least one event over this
nonempty open interval is positive, we have a contradiction. This shows $f(t) = 0$ for all $t \in \R_+$.
\end{proof}
\begin{remark}
  \label{remark:consistency-conditions}
  If $\lambda^n_{t, \theta}$, $\lambda^{x,n,c}_{t, \theta}$ and $\dot{\lambda}^{x,n,c}_{t, \theta}$ are all bounded in $\mathbb{L}_{1+\epsilon}$ for some $\epsilon > 0$ uniformly in $t$ and
  $\theta$, then we no longer require integrability of $g$ as assumed under \hyperref[cond:C3]{[C3](2)} for $L^n(\theta, \theta^\ast) - L^n(\theta^\ast, \theta^\ast) - \pi(x, \theta, \theta^\ast)
  \rightarrow 0$ in probability. This stems from the fact that Lemma~\ref{lm:prep-proofs-lm1} also holds for non-integer valued $p$ as a consequence of the BDG inequality. For more details see Remark~\ref{remark:prep-proofs-remark1}.
\end{remark}

\subsection{Asymptotic Normality}\label{subsec:asym_norm}
On the asymptotic distribution of the MLE, we have the following result.
\begin{theorem}\label{thm:asym-norm}
	Under \ref{cond:C1}, \ref{cond:C2}, \hyperref[cond:C3]{[C3](5)}, \hyperref[cond:C4]{[C4](3)} and \ref{cond:C5}, there exists an invertible matrix $\Gamma(\theta^\ast) \in \R^{d \times d}$ such that
	\begin{equation}\label{eq:asynormeq1}
		\sqrt{n} (\hat{\theta}^n - \theta^\ast) \overset{\mathcal{D}}{\rightarrow} \mathcal{N}(0, \Gamma(\theta^\ast)^{-1})
	\end{equation}
      \end{theorem}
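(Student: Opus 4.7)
The plan is to follow the classical M-estimator route via a mean-value expansion of the score around $\theta^\ast$. Since Theorem~\ref{thm:consistency} gives $\hat\theta^n \converginP \theta^\ast$ and $\theta^\ast$ lies in the interior of $\Theta$, with probability tending to one the MLE solves $S^n(\hat\theta^n)=0$, and a Taylor expansion yields
\begin{equation*}
\sqrt{n}(\hat\theta^n - \theta^\ast) = \bigl(I^n(\tilde\theta^n)\bigr)^{-1}\,\sqrt{n}\,S^n(\theta^\ast)
\end{equation*}
for some $\tilde\theta^n$ on the segment between $\hat\theta^n$ and $\theta^\ast$, where $I^n$ is the normalised observed information in \eqref{InformationMatrix}. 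It therefore suffices to prove: (i) $\sqrt{n}\,S^n(\theta^\ast) \converginD \mathcal{N}(0,\Gamma(\theta^\ast))$; (ii) $I^n(\tilde\theta^n) \converginP \Gamma(\theta^\ast)$; and (iii) $\Gamma(\theta^\ast)$ is invertible. Slutsky's theorem then delivers \eqref{eq:asynormeq1}.

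For (i), let $\bar{N}^n_t \defeq N^n_t - \int_0^t \lambda^n_{s,\theta^\ast}\rmd s$, which is a local square-integrable $\mathcal{F}^n$-martingale, and set
\begin{equation*}
M^n_t \defeq \frac{1}{\sqrt{n}}\int_0^t \frac{\partial_\theta \lambda^n_{s,\theta^\ast}}{\lambda^n_{s,\theta^\ast}}\rmd \bar{N}^n_s, \qquad t \in [0,n],
\end{equation*}
so that $\sqrt{n}\,S^n(\theta^\ast) = M^n_n$. I would invoke Rebolledo's martingale central limit theorem. The predictable quadratic variation is
\begin{equation*}
\langle M^n\rangle_n = \frac{1}{n}\int_0^n \frac{(\partial_\theta \lambda^n_{t,\theta^\ast})^{\otimes 2}}{\lambda^n_{t,\theta^\ast}}\rmd t,
\end{equation*}
which has the form $\frac{1}{n}\int_0^n \psi(Y^n_{t,\theta^\ast,\theta^\ast})\rmd t$ with $\psi(y_1,y_2,u,V)=u u\tr/y_2$; since $\lambda^n_{t,\theta^\ast}\ge\underline{\nu}>0$ by \ref{cond:C1}, $\psi$ belongs to the class $D_p(E_1,\R)$ for appropriate $p$, and Theorem~\ref{thm:asympergodicity} under \hyperref[cond:C3]{[C3](5)} yields $\langle M^n\rangle_n \converginP \Gamma(\theta^\ast)$ where
\begin{equation*}
\Gamma(\theta^\ast) \defeq \int_0^1 \E\!\left[\frac{(\partial_\theta \dot\lambda^{x,n,c}_{0,\theta^\ast})^{\otimes 2}}{\dot\lambda^{x,n,c}_{0,\theta^\ast}}\right]\rmd x.
\end{equation*}
The Lindeberg/jump condition reduces to showing $\frac{1}{n}\sum_{t\le n} \bigl|n^{-1/2}\partial_\theta\lambda^n_{t,\theta^\ast}/\lambda^n_{t,\theta^\ast}\bigr|^2 \1_{\{\cdot > \varepsilon\}} \converginP 0$, which follows from the $\mathbb{L}_p$ boundedness of $|\partial_\theta \lambda^n_{t,\theta^\ast}|/\lambda^n_{t,\theta^\ast}$ uniform in $t$ and $n$ supplied by the lemmas of Appendix~\ref{sec:preperation-proofs}; this is precisely where the strengthened integrability \hyperref[cond:C3]{[C3](5)} is needed.

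For (ii), I would split $I^n(\theta)$ into its compensator piece $\frac{1}{n}\int_0^n \partial_\theta^{\otimes 2}\lambda^n_{t,\theta}\rmd t$ minus a time-integral of $\partial_\theta^{\otimes 2}\lambda^n_{t,\theta}/\lambda^n_{t,\theta} - (\partial_\theta\lambda^n_{t,\theta}/\lambda^n_{t,\theta})^{\otimes 2}$ against $\bar N^n$ plus its own compensator. The $\bar N^n$ integrals are asymptotically negligible by a Lenglart argument identical to the one used in the proof of Theorem~\ref{thm:consistency}, while the remaining compensator integrals are of the form $\frac{1}{n}\int_0^n \psi(Y^n_{t,\theta,\theta^\ast})\rmd t$ with $\psi\in D_p(E_1,\R)$; Proposition~\ref{prop:uniform-convergence} (which uses \hyperref[cond:C4]{[C4](3)}) then gives uniform-in-$\theta$ convergence to a function $\theta\mapsto\Gamma(\theta)$ that is continuous on $\Theta$. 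Because $\tilde\theta^n$ is squeezed between $\hat\theta^n$ and $\theta^\ast$ and $\hat\theta^n\converginP\theta^\ast$, uniform convergence plus continuity yields $I^n(\tilde\theta^n)\converginP \Gamma(\theta^\ast)$; the identification of this limit with the same matrix appearing in (i) is verified by a direct computation exploiting the representation $\dot\pi(x,\theta^\ast,\theta^\ast;\psi)=\E[\psi(\dot Y^{x,n,c}_{0,\theta^\ast,\theta^\ast})]$ from Lemma~\ref{lm:ergodicity-lm3}. Finally, invertibility of $\Gamma(\theta^\ast)$ amounts to checking that $z\tr \partial_\theta \dot\lambda^{x,n,c}_{0,\theta^\ast}=0$ almost surely for almost every $x\in[0,1]$ forces $z=0$: this is the infinitesimal version of the identifiability argument carried out in the proof of Theorem~\ref{thm:consistency}, where separability in \ref{cond:C5}(i) isolates the branching ratio $\eta$ first, after which \ref{cond:C5}(ii) handles $\theta_\nu$-components and \ref{cond:C5}(iii) handles $\tilde\theta_g$-components. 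I expect the main obstacle to be the rigorous verification of the Lindeberg condition together with the uniform growth/integrability bounds that place the relevant integrands in $D_p(E_1,\R)$; everything else is assembly of ingredients already furnished by Section~\ref{sec:main_results} and the preparatory appendix.
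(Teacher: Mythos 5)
Your proposal is correct and follows essentially the same route as the paper: the same mean-value expansion of the score, the same martingale representation and martingale CLT (the paper cites Fleming--Harrington's Theorem 5.1.1, a version of Rebolledo's theorem) with the quadratic variation handled by the block/ergodicity machinery of Theorem~\ref{thm:asympergodicity}, the same split of the observed information into a Lenglart-negligible martingale part plus a compensator converging uniformly via Proposition~\ref{prop:uniform-convergence}, and the same identifiability argument under \ref{cond:C5} for positive definiteness of $\Gamma(\theta^\ast)$. The only cosmetic difference is that the paper verifies the Lindeberg condition by truncating the jumps of $M^n$ and applying Lenglart's inequality, whereas you appeal directly to uniform $\mathbb{L}_p$ bounds; these amount to the same estimate.
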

\begin{proof}
  Let $\tilde{\theta}^n$ be a point that lies on the line segment joining $\hat{\theta}^n$ and $\theta^\ast$, an
  application of the mean value theorem to $S^n$ gives
\begin{equation*}
  \sqrt{n} S^n(\hat{\theta}^n) = \sqrt{n} S^n(\theta^\ast) - \mathcal{I}^n(\tilde{\theta}^n)
  \sqrt{n}(\hat{\theta}^n - \theta^\ast).
\end{equation*}
Since $\sqrt{n}S(\hat{\theta}^n) = 0$, we have
\begin{equation*}
  \sqrt{n} (\hat{\theta}^n - \theta^\ast ) = \mathcal{I}^n(\tilde{\theta}^n)^{-1}
  \sqrt{n}S^n(\theta^\ast).
\end{equation*}
In view of Slutsky's theorem, if we can show that
\begin{equation}\label{eq:asym_norm_pf_eq1}
  \sqrt{n} S^n(\theta^\ast) \converginD \mathcal{N}\big(0,\Gamma(\theta^\ast) \big)
\end{equation}
and
\begin{equation}\label{eq:asym_norm_pf_eq2}
  \mathcal{I}^n(\tilde{\theta}^n) \converginP \Gamma(\theta^\ast)
\end{equation}
for some matrix valued function $\Gamma(\cdot)$ that is invertible when evaluated at $\theta^\ast$, we have the desired
result.
	
Let us show \eqref{eq:asym_norm_pf_eq1} and \eqref{eq:asym_norm_pf_eq2} sequentially. There exists a martingale
representation of the score function where we can write $\sqrt{n} S^n(\theta^\ast) = \int_0^n U^n_s \rmd \bar{N}^n_s$
with $U^n_s = \partial_{\theta^\ast} \lambda^n_{s, \theta^\ast} / (\sqrt{n} \lambda^n_{s, \theta^\ast})$.  The process
$U^n_t = (U^{1,n}_t, \ldots, U^{d,n}_t)\tr \in \R^d$ is locally bounded and predictable, and the compensated process
$\bar{N}^n$ is a square-integrable local martingale. Hence, under the assumed conditions, each component of
$M^n_t = (M^{1,n}_t, \ldots, M^{d,n}_t)\tr = \int_0^{nt} U^n_s \rmd \bar{N}^n_s \in \R^d$ is a square-integrable local
martingale with $M^n_1 = \sqrt{n}S^n(\theta^\ast)$. Denote
$M^{n, \tilde{\epsilon}}_t = (M^{1,n,\tilde{\epsilon}}_t, \ldots, M^{d,n,\tilde{\epsilon}}_t)\tr \in \R^d$ where
$M^{i,n,\tilde{\epsilon}}_t = \int_0^{nt} U^{i,n}_s \1 \{ \vert U^{i,n}_s \vert > \tilde{\epsilon}\}\rmd \bar{N}^n_s$
for $i \in \{1, \ldots, d\}$. If there exists a mapping $\gamma:[0,1] \times \Theta \mapsto \R^{d \times d}$ such that

\begin{equation}\label{eq:asym_norm_pf_eq3}
  \langle M^n, M^n \rangle_t \converginP \gamma(t, \theta^\ast)
\end{equation}
and
\begin{equation}\label{eq:asym_norm_pf_eq4}
  \langle M^{n,\tilde{\epsilon}}, M^{n, \tilde{\epsilon}} \rangle_t \converginP 0 \quad \forall \ \tilde{\epsilon} >0,
\end{equation} 
then, by the martingale central limit theorem (see \cite{fleming2011counting}, Theorem 5.1.1),
\eqref{eq:asym_norm_pf_eq1} holds with $\Gamma(\theta^\ast) = \gamma(1, \theta^\ast)$. To see
\eqref{eq:asym_norm_pf_eq3} holds, note that
$\langle \bar{N}^n, \bar{N}^n \rangle_t = \int_0^t \lambda^n_{s, \theta^\ast} \rmd s$. Hence, the predictable variation
of $M^n_t$ is given by
\begin{equation*}
  \langle M^n, M^n \rangle_t = \frac{1}{n} \int_0^{nt} \frac{\big(\partial_{\theta^\ast} \lambda^n_{s, \theta^\ast} \big)^{\otimes 2}}{\lambda^n_{s, \theta^\ast}} \rmd s.
\end{equation*}
Since $t \in [0,1]$, by adopting the approximation procedure outlined in Section~\ref{sec:main_results}, we can split
the integral into $a_n$ number of equal width blocks with width $b_n = n/a_n$ and write
\begin{align*}
  \langle M^n, M^n \rangle_t = \frac{1}{a_n} \sum_{i=1}^{\lfloor t a_n \rfloor} \frac{1}{b_n} \int_0^{b_n} \frac{\big( \partial_{\theta^\ast} \lambda^{i,n}_{s, \theta^\ast} \big) ^{\otimes 2}}{\lambda^{i,n}_{s, \theta^\ast} } \rmd s + \frac{1}{n} \int_0^{nt - \lfloor t a_n \rfloor b_n} \frac{\big( \partial_{\theta^\ast} \lambda^{\lfloor t a_n \rfloor + 1, n}_{s, \theta^\ast}\big)^{\otimes 2} }{\lambda^{\lfloor t a_n \rfloor + 1, n}_{s,\theta^\ast}} \rmd s.
\end{align*}
The second term on the right converges to 0 almost surely since $nt - \lfloor t a_n \rfloor b_n = o(n)$. From the proof
of Theorem~\ref{thm:asympergodicity}, we deduce that under the assumed conditions, there exists
$\dot{\gamma}:[0,1] \times \Theta \mapsto \R^{d \times d}$ such that
\begin{equation*}
  \sup_{x \in [0,1]} \Big\vert \frac{1}{b_n} \int_0^{b_n} \frac{\big( \partial_{\theta^\ast} \lambda^{x,n}_{t, \theta^\ast} \big)^{\otimes 2}}{\lambda^{x,n}_{t, \theta^\ast}} \rmd t - \dot{\gamma}(x, \theta^\ast) \Big\vert \converginP 0,
\end{equation*}
from which we can further deduce that
\begin{equation*}
  \Big\vert \frac{1}{a_n} \sum_{i=1}^{\lfloor t a_n \rfloor}  \Big( \frac{1}{b_n} \int_0^{b_n} \frac{\big( \partial_{\theta^\ast} \lambda^{i,n}_{s, \theta^\ast} \big)^{\otimes 2} }{\lambda^{i,n}_{s, \theta^\ast} } \rmd s- \dot{\gamma}\big( \frac{\tau^n_{i-1}}{n}, \theta^\ast \big) \Big) \Big\vert \converginP 0
\end{equation*}
for all $t \in[0,1]$. Following the proof of Lemma~\ref{lm:ergodicity-lm4}, it can be shown that
$\{\dot{\gamma}(x, \theta) \}_{\theta \in \Theta}$ is equicontinuous, and, hence, is integrable with respect to
$x$. This shows \eqref{eq:asym_norm_pf_eq3} holds with
$\gamma(t, \theta^\ast) = \int_0^t \dot{\gamma}(x, \theta^\ast) \rmd x$ and
$\Gamma(\theta^\ast) = \gamma(1, \theta^\ast)$.
	
Next, we turn to the big jumps of $M^n_t$. Note that $M^{n, \tilde{\epsilon}}_t$ contains all the jumps of $M^n$ of size
$\tilde{\epsilon}$ or larger. Hence, it suffices that
$\sup_{s \in [0, t]} \vert M^{n, \tilde{\epsilon}}_s \vert \rightarrow 0$ in probability dimension-wise for all
$t \in [0,1]$ in order to deduce \eqref{eq:asym_norm_pf_eq4}. By Lenglart's inequality again, for any $\eta >0$, there
exists $\delta >0$ such that
\begin{equation*}
  \PP \big( \sup_{s \in [0,t]} \big\vert M^{i, n, \tilde{\epsilon}}_t \big\vert > \eta \big) \le \frac{\delta}{\eta^2} + \PP \Big( \int_0^{nt} \big( U^{i,n}_s \1 \{ \vert U^{i,n}_s \vert > \tilde{\epsilon} \} \big)^2 \lambda^n_{s, \theta^\ast} \rmd s > \delta \Big).
\end{equation*}
Under the assumed conditions and by Lemma~\ref{lm:prep-proofs-lm1},
$(U^{i,n}_s)^2 \lambda^n_{s, \theta^\ast} = O_P(n^{-1})$ for all $s \in [0, n]$. Hence, the second term of the above
bound tends to 0 with $n$ for all $t \in [0,1]$. Since $\delta$ and $\eta$ are arbitrary, this shows
$\sup_{s \in [0,t]} \vert M^{n, \tilde{\epsilon}}_s \vert \rightarrow 0$ in probability dimension-wise for all
$t \in [0,1]$. This proves \eqref{eq:asym_norm_pf_eq4} and, consequently, \eqref{eq:asym_norm_pf_eq1}.
	
To show \eqref{eq:asym_norm_pf_eq2}, note that we can write
$\mathcal{I}^n(\theta) = J^n(\theta) + I^n(\theta, \theta^\ast)$ where
\begin{equation*}
  J^n(\theta) = \frac{-1}{n} \int_0^n \frac{\partial_\theta^{\otimes 2} \lambda^n_{t, \theta}}{\lambda^n_{t, \theta}} - \bigg(\frac{\partial_\theta \lambda^n_{t, \theta} }{\lambda^n_{t, \theta}} \bigg)^{\otimes 2} \rmd \bar{N}^n_t
\end{equation*}
and
\begin{equation*}
  I^n(\theta, \theta^\ast) = \frac{1}{n} \int_0^n - \bigg( \frac{\partial_\theta^{\otimes 2} \lambda^n_{t, \theta}}{\lambda^n_{t, \theta}} - \bigg( \frac{\partial_\theta \lambda^n_{t, \theta}}{\lambda^n_{t, \theta}}\bigg)^{\otimes 2} \bigg) \lambda^n_{t, \theta^\ast} + \partial_\theta^{\otimes 2} \lambda^n_{t, \theta} \rmd t.
\end{equation*}
By Lenglart's inequality again, it can be shown that $\vert J^n(\theta)\vert \rightarrow 0$ in probability under
\hyperref[cond:C3]{[C3]($5$)}. By Theorem~\ref{thm:asympergodicity} and Proposition~\ref{prop:uniform-convergence},
there exists $\alpha: \Theta \times \Theta \mapsto \R^{d \times d}$ such that
\begin{equation*}
  \sup_{\theta \in\Theta} \big\vert I^n(\theta, \theta^\ast) - \alpha (\theta, \theta^\ast) \big\vert \converginP 0.
\end{equation*}
Since $\hat{\theta}^n \rightarrow \theta^\ast$ in probability and $\tilde{\theta}^n$ is sandwiched between
$\hat{\theta}^n$ and $\theta^\ast$, we have $\tilde{\theta}^n \rightarrow \theta^\ast$ in probability. Following the
arguments from the proof of Lemma~\ref{lm:ergodicity-lm4}, it can be shown that $\alpha(\theta, \theta^\ast)$ is
continuous in $\theta$. Hence by the continuous mapping theorem,
$\alpha(\hat{\theta}^n, \theta^\ast) \rightarrow \alpha(\theta^\ast, \theta^\ast)$ in probability.  Furthermore, noting
that $\mathcal{I}^n(\theta^\ast) = \langle M^n, M^n \rangle_1 \rightarrow \Gamma(\theta^\ast)$ in probability, it
follows that $\alpha(\theta^\ast, \theta^\ast) = \Gamma(\theta^\ast)$.
	
It remains to show $\Gamma(\theta^\ast)$ is positive definite. Recall that
$\Gamma(\theta^\ast) = \int_0^1 \dot{\gamma}(x, \theta^\ast) \rmd x$, by Lemma~\ref{lm:ergodicity-lm3}, the mapping
$\dot{\gamma}(x, \theta^\ast)$ takes the form
\begin{equation*}
  \dot{\gamma}(x, \theta^\ast) = \E \bigg[ \frac{\big( \partial_{\theta^\ast} \dot{\lambda}^x_{t, \theta^\ast} \big)^{\otimes 2} }{\dot{\lambda}^x_{t, \theta^\ast} }\bigg].
\end{equation*}
For some fixed $z = (z_1, \ldots, z_d)\tr \in \R^d$, we shall show that $z\tr \dot{\gamma}(x, \theta^\ast) z = 0$ only
when $z = 0$. Rewriting $z\tr \dot{\gamma}(x, \theta^\ast) z = 0$ as
\begin{equation*}
  \E \bigg[ \frac{1}{\dot{\lambda}^x_{t, \theta^\ast}} \Big( \sum_{i=1}^d z_i  \partial_{\theta_i}\dot{\lambda}^x_{t, \theta^\ast}  \Big)^2 \bigg] =0,
\end{equation*} 
and noting that $0 < \underline{\nu} \le \dot{\lambda}^x_{t, \theta^\ast}$ almost surely, it suffices to show
\begin{equation*}
  \sum_{i=1}^d z_i \partial_{\theta_i}\dot{\lambda}^x_{t, \theta^\ast}  =0
\end{equation*}
almost surely only when $z_i= 0$ for all $i \in \{1, \ldots, d\}$. By inspection, when $z_i = 0$ for all $i$, we have
$$\sum_{i=1}^d z_i \partial_{\theta_i} \dot{\lambda}^x_{t, \theta} = 0$$
almost surely. To see that the converse holds, since
$\E[\sum_{i=1}^d z_i \partial_{\theta_i} \dot{\lambda}^x_{t, \theta} ] =0$ and
$\E [\dot{\lambda}^x_{t, \theta^\ast}] = \nu(x; \theta^\ast) /(1- \eta^0)$, thus, by Leibniz's rule for integrals, we
can write
        
\begin{align*}
  \sum_{i=1}^{d_\nu} z_{\nu, i} \partial_{\theta_{\nu,i}} \nu(x; \theta_\nu)	= {} & - \int_{-\infty}^{t-} \sum_{i=1}^{d_g} z_{g, i} \partial_{\theta_{g, i}} g(t-s; \theta_g) \E \big[ \dot{\lambda}^x_{s, \theta^\ast} \big] \rmd s \\
  = {} & - \E \big[\dot{\lambda}^x_{t, \theta^\ast}\big] \sum_{i=1}^{d_g} z_{g,i} \partial_{\theta_{g,i}} \int_{-\infty}^{t-} g(t-s; \theta_g) \rmd s  \\
  = {} &  - z_{g,1} \frac{\nu(x; \theta_\nu^0)}{1-\eta^0}.
\end{align*}
Taking the derivative with respect to $\nu(x; \theta_\nu^0)/(1-\eta^0)$ on both sides of the above expression, we deduce
that $z_{g,1} = 0$ and $\sum_{i=1}^{d_\nu} z_{\nu, i} \partial_{\theta_{\nu, i}} \nu(x; \theta_\nu) = 0$, which, in view
of \hyperref[cond:C5]{[C5]-(ii)}, implies $z_{\nu, i} = 0$ for all $i \in \{1, \ldots, d_\nu\}$. From here, we have
\begin{equation*}
  0= \int_{-\infty}^{t-} \sum_{i=2}^{d_g} z_{g, i} \partial_{\theta_{g, i}} g(t-s; \theta_g) \rmd \dot{N}^x_s,
\end{equation*}
almost surely. Following the argument presented after \eqref{eq:consistency-proof-eq5} to deduce $f(t) = 0$ for all
$t \in \R_+$, we can similarly deduce that $\sum_{i=2}^{d_g} z_{g, i} \partial_{\theta_{g, i}} g(t; \theta_g) = 0$ for
all $t \in \R_+$, which, in view of \hyperref[cond:C5]{[C5]-(iii)}, implies $z_{g, i} = 0$ for all
$i \in \{2, \ldots, d_g\}$. This proves $\dot{\gamma}(x, \theta^\ast)$ is positive definite for all $x \in [0,1]$ and
hence so is $\Gamma(\theta^\ast)$.
\end{proof}
\section{Examples and numerical simulations}\label{sec:exampl-numer-simul}
\subsection{Examples of non-exponential $g$}\label{sec:exampl-non-expon}
In this section, we give examples of $\nu$ and non-exponential $g$ satisfying \ref{cond:C1}, \ref{cond:C2}, \hyperref[cond:C3]{[C3](p)}, \hyperref[cond:C4]{[C4](p)} and \ref{cond:C5}. Furthermore, we will verify, via numerical simulations, consistency and asymptotic normality of the MLE for three classes of non-exponential kernels: generalised Pareto, Gamma and Weibull. 

Note that \ref{cond:C1} and \hyperref[cond:C5]{[C5]-(i)} are regularity conditions on the model and can be easily verified. Let us focus on the conditions imposed on $\nu$ first, namely \ref{cond:C2}, \hyperref[cond:C4]{[C4](p)-(i)} and \hyperref[cond:C5]{[C5]-(ii)}. Upon inspection, these conditions are concerned with the continuity and boundedness of $\nu$ and its partial derivatives with respect to $\theta$, up to order 2. These conditions are straightforward to verify and are not overly restrictive for practical model applications. For example, the model considered in the simulation study by \cite{chen2013inference} has a quadratic baseline intensity function given by
\begin{equation}\label{eq:sim-stud-eq1}
	\nu(x; \theta_\nu) = e^{\theta_{\nu, 1}} + \left(e^{\theta_{\nu,2}} + e^{\theta_{\nu, 3}}\right)^2 \left(x - e^{\theta_{\nu, 2}}/\big(e^{\theta_{\theta_\nu, 2}} + e^{\theta_{\nu, 3}} \big)\right)^2. \nonumber
\end{equation}
By inspection, there exists $\underline{\nu}$, $\bar{\nu}$ and a compact $\Theta_\nu$ such that
$0 < \underline{\nu} \le \nu(x; \theta_\nu) \le \bar{\nu} < \infty$ for all
$(x, \theta_\nu) \in [0,1]\times \Theta_\nu$. Furthermore, for any $(x, \theta_\nu) \in [0,1]\times \Theta_\nu$ and
$j \in \{0, 1, 2\}$, $\vert \partial_\theta^{\otimes j} \nu(x; \theta_\nu) \vert$ is bounded and the families of
functions $\{\partial_{\theta_\nu} ^{\otimes j} \nu(x; \theta_\nu)\}_{\theta_\nu \in \Theta_\nu}$ and
$\{ \partial_{\theta_\nu}^{\otimes j} \nu(x; \theta_\nu)\}_{x\in [0,1]}$ are both equicontinuous. Finally, by computing
the first partial derivative of $\nu(x; \theta_\nu)$ with respect to $\theta_\nu$, \hyperref[cond:C5]{[C5]-(ii)} holds
trivially. Another suitable candidate of $\nu$ can be constructed via B-spline basis functions. Denote
$B^{n_b,k}_i(\cdot)$ the $i$th B-spline basis function of order $n_b$ and $k$ number of internal knots, the baseline
intensity function takes the form
\begin{equation}\label{eq:sim-stud-eq2}
  \nu(x; \theta_\nu) = \sum_{i=1}^{n_b + k}\theta_{\nu, i} B^{n_b, k}_i(x).
\end{equation}
While there are other families of functions that satisfy the conditions imposed on $\nu$, for the rest of this section,
we assume that the baseline intensity function takes the form as in \eqref{eq:sim-stud-eq2} and the aforementioned
conditions imposed on $\nu$ are all satisfied.

On the other hand, the generalised Pareto, Gamma and Weibull distributions all satisfy the conditions imposed on the
excitation kernel, namely \hyperref[cond:C3]{[C3](p)}, \hyperref[cond:C4]{[C4](p)-(ii)} and
\hyperref[cond:C5]{[C5](p)-(iii)}, with some restrictions on the parameters as a function of $p$. The density functions
for the generalised Pareto, Gamma and Weibull distributions are respectively
\begin{align*}
  & \tilde{g}_p(t; \alpha, \beta) \defeq \frac{1}{\beta}\Big(1 + \frac{\alpha t}{\beta} \Big)^{-(1/\alpha +1)},  & \alpha >0, \beta >0, t > 0;\\
  & \tilde{g}_g(t; \alpha, \beta) \defeq \frac{1}{\Gamma(\alpha) \beta^\alpha} t^{\alpha - 1} e^{-t/\beta},  & \alpha >0, \beta >0, t >0;  \\
  & \tilde{g}_w(t; \alpha, \beta) \defeq \frac{\alpha}{\beta}\Big(\frac{t}{\beta}\Big)^{\alpha-1} e^{-(t/\beta)^\alpha},  & \alpha >0, \beta >0, t >0; 
\end{align*}
where in all three cases, $\alpha$ denotes the shape parameter and $\beta$ the scale parameter. The generalised Pareto
density function takes the value $1/\beta$ at the origin $t=0$ and decays towards 0 at a polynomial rate. Relative to
the exponential kernel, $\tilde{g}_p$ decays at a much slower rate and has a heavier tail, thus making the generalised
Pareto kernel prefereable to the exponential kernel when modelling phenomena in which events in the past have longer
term effects on the current intensity. Note that both the exponential and generalised Pareto kernels decay
monotonically, hence, if we wish to model events where the peak excitation effect of an occurrence is not immediate but
rather at a delay, the Gamma or Weibull kernel with $\alpha > 1$ may provide a better fit.

Note that, the integrability conditions imposed under \hyperref[cond:C3]{[C3](p)} and \hyperref[cond:C4]{[C4](p)-(ii)}
may further restrict the parameter space $\tilde{\Theta}$ of the density function. Taking the generalised Pareto kernel
as an example, by computing the partial derivatives of $\tilde{g}_p(t; \alpha,\beta)$ with respect to $\alpha$ and
$\beta$ up to order 2 (the exact form of these partial derivatives are given in \hyperref[sec:Examples-of-g]{Appendix
  C}, a quick calculations shows that $\tilde{g}_p(t; \alpha, \beta)$ satisfies the integrability condition under
\hyperref[cond:C3]{[C3](p)} for $\alpha < 1/2$ and $\beta >0$. Following a similar argument, it is can be shown that
$\tilde{g}_p(t; \alpha, \beta)$ satisfies \hyperref[cond:C4]{[C4](p)-(ii)} for $\alpha < 1/3$ and $\beta > 0$. By taking
a similar approach, we can also deduce that when $\alpha > 1/p$ and $\beta >0$, both $\tilde{g}_g(t; \alpha, \beta)$ and
$\tilde{g}_w(t; \alpha, \beta)$ satisfie \hyperref[cond:C3]{[C3](p)} and \hyperref[cond:C4]{[C4](p)-(ii)}. The details
on this are relegated to \hyperref[sec:Examples-of-g]{Appendix C}.

\subsection{Numerical simulations}\label{sec:numer-simul}
In the previous subsection, we gave examples of $\nu$ and considered three families of $g$ such that the intensity
process satisfies the regularity conditions under which the MLEs are consistent and asymptotically normal. In this
subsection, we investigate the impact of sample size on these properties using simulations of three different models as
summarized in Table~\ref{tab:sim-study-model-sum}.. For each model, the baseline intensity function takes the form
\eqref{eq:sim-stud-eq2} with $n_b = 2$, $k = 1$, and
$(\theta_{\nu, 1}, \theta_{\nu, 2}, \theta_{\nu, 3}) = (5, 1.25, 2.5)$. The resulting shape of the baseline intensity is
motivated by phenomena that daily stock trading intensities tend to be higher during market open and market close, and
lower during middle of the day \cite{chen2013inference,stindl2018likelihood,kwan2022alternative}.

The first model assumes a generalised Pareto kernel $g(t; \theta_g) = \eta \tilde{g}_p(t; \alpha \beta)$ with
$(\eta, \alpha, \beta) = (0.5, 0.25, 0.75)$, the second model assumes a Gamma kernel
$g(t; \theta_g) = \eta \tilde{g}_g(t; \alpha, \beta)$ with $(\eta, \alpha, \beta) = (0.5, 2, 0.5)$ and the third model
assumes a Weibull kernel $g(t; \theta_g) = \eta \tilde{g}_w(t; \alpha, \beta)$ with
$(\eta, \alpha, \beta) = (0.5, 2, \Gamma(1.5)^{-1})$. 

\begin{table}[hbt]
  \centering
  \begin{adjustbox}{width=1\textwidth}
    \begin{tabular}{l l l }
      \toprule
      & \multicolumn{1}{c}{Baseline Intensity} & \multicolumn{1}{c}{Excitation Kernel}  \\
      \midrule
      Model 1 & $ \nu(x; \theta_\nu) = 5 B^{2,1}_1(x) + 1.25 B^{2,1}_2(x) + 2.5 B^{2,1}_3(x)$ & $g(t; \theta_g) = 0.5 \tilde{g}_p(t; 0.25, 0.75)$ \\
      Model 2 & $ \nu(x; \theta_\nu) = 5 B^{2,1}_1(x) + 1.25 B^{2,1}_2(x) + 2.5 B^{2,1}_3(x)$ & $g(t; \theta_g) = 0.5 \tilde{g}_g(t; 2, 0.5)$  \\
      Model 3 & $ \nu(x; \theta_\nu) = 5 B^{2,1}_1(x) + 1.25 B^{2,1}_2(x) + 2.5 B^{2,1}_3(x)$ & $g(t; \theta_g) = 0.5 \tilde{g}_w(t; 2, \Gamma(1.5)^{-1})$ \\
      \bottomrule
    \end{tabular}
  \end{adjustbox}
  \caption{Summary of the models considered in the simulation study.}\label{tab:sim-study-model-sum}
\end{table}

For each value of $n$ selected, we simulated $1000$ replicates of sample paths for the above models on the interval
$[0,1]$ with baseline intensity function $n\nu(\cdot; \theta_\nu)$ and excitation kernel $ng(\cdot; \theta_g)$ for
$g = \eta \tilde{g}_p$, $g = \eta \tilde{g}_g$, $g = \eta \tilde{g}_w$. For the three models we present results for
$n= 100, 400, 1600$. As we shall soon see, for the generalised Pareto kernel case, the rate of convergence of the MLEs
to their true parameter values and to normality is much slower compared to the case of the Gamma or Weibull kernel. For
this reason, we include additional results for $n = 6400$ for this model.

The simulation results are summarised in Tables~\ref{tab:sim-study-sum-results-1} –
\ref{tab:sim-study-sum-results-3}. In these tables, the first column gives the value of $n$, the second column gives the
mean number of events of the 1000 simulations, the third column gives the parameter, the fourth column gives the true
parameter value, the fifth column gives the mean of the 1000 estimates, the sixth column gives the standard error
(obtained as the standard deviation of the 1000 estimates divided by $\sqrt{n}$) and the last column gives the p-value
of the Kolnogorov-Smirnov (KS) test of normality of the 1000 estimates where the mean and standard deviation of the
theorised normal distribution are chosen as the empirical mean and standard deviation of the 1000 estimates given in the
fifth and sixth columns of the tables, respectively. Figures~\ref{fig:sim-study-qq-GP} – \ref{fig:sim-study-qq-WB}
present the QQ-plots, against normality, of the parameter estimates for Models 1 – 3 respectively.

From Tables~\ref{tab:sim-study-sum-results-1} – \ref{tab:sim-study-sum-results-3}, we observe that the mean estimates of
the parameters and their respective true values are close, relative to their standard errors, as $n$
increases. Furthermore, the empirical standard error shrinks by a factor of roughly $2$ each time $n$ quadruples, which
is as expected. Across the three specifications of kernel estimates of the baseline parameters tend to be biased upwards
to a similar level but the amount of bias is small relative to their standard errors and essentially disappears when
sample size $n=1600$ is reached. Also, across all three kernel specifications, the branching ratio $\eta$ tends to be
underestimated for sample sizes $n=100, 400$ and again this bias disappears when $n=1600$.

The final column of Tables Tables~\ref{tab:sim-study-sum-results-1} – \ref{tab:sim-study-sum-results-3} indicate that
convergence to normality is ultimately reached for all parameters. However, it is notable that the estimated of
parameters $\alpha$ and $\beta$ (which define the shape of the excitation kernels) require larger sample sizes to
achieve normality than is required for the baseline function and branching ration $\eta$. This is particularly true for
the generalised Pareto kernel which requires a larger sample size than for the other two cases to attain
normality. These observation are borne out in the QQ-plots. Figure~\ref{fig:sim-study-qq-GP} indicate that the shape
parameter estimate, $\hat{\alpha}$, for Model 1 is significantly positively skewed for $n = 100$, with slight and slow
improvements as $n$ is progressively quadrupled. Similar patterns are also present for the estimated shape parameter for
Models 2 and 3. Additionally, for the generalise Pareto kernel, $\hat{\alpha}=0$ for a large percentages of the
simulates.  Note that when $\alpha = 0$, the generalised Pareto distribution becomes the exponential distribution with
scale parameter equals to $\beta$. Thus, we can infer from Figure~\ref{fig:sim-study-qq-GP} that a significant
proportion of the fitted models suggests a Hawkes process with an exponential kernel, and that proportion decreases
inversely with $n$. The lack of events from small values of $n$ may contribute to this phenomenon, which when compounded
with the substantial edging effect from the generalised Pareto kernel, could result in a situation where the model has
insufficient information to justify the use a generalised Pareto kernel.

Interestingly, for all three models, and for all values of $n$, the mean estimates of $\eta$ are consistently below the
true value of $0.5$, and increase monotonically with $n$. However, this phenomenon does not necessarily suggest that the
models are consistently underestimating the number of offspring events, but it could be, again, a consequence of the
edging effect, since the simulations are right truncated, ergo not all offspring events are realised. Evident from
Table~\ref{tab:sim-study-sum-results-1}, for the p-values of the KS tests against normality of the estimated parameters
from Model 1 to be uniformly greater than 0.5, especially for $\hat{\eta}$, $\hat{\alpha}$, and $\hat{\beta}$, the model
demands a significantly larger value of $n$ compared to its alternatives. This is due to the rate of polynomial decay
from the generalised Pareto kernel, which resulted in a model with more a persistent and longer lasting excitation
effect. Naturally, such a model may require a much larger $n$ to conduct asymptotic inference reliably.

\begin{table}[hbt]
  \centering
  \begin{adjustbox}{width=1\textwidth}
    \begin{tabular}{>{\centering\arraybackslash}m{0.15\linewidth} c c c c c c c}
      \toprule
      Model 1 $\qquad$ (generalised Pareto): & Mean \({N^n_1}\)&Par.&True&Mean Est.&Emp. S.E. &  p-value (KS test)\\\hline
		
		\multirow{5}{*}{$n = 100$}&	\multirow{5}{*}{\( 581 \)}	&$\theta_{\nu,1}$ & 5.00 & 5.684 & 1.264 & 4.540 e-6 \\ 
		&&$\theta_{\nu,2}$& 1.25 & 1.342 & 1.411 & 0.660 \\
		& &$\theta_{\nu,3}$& 2.50 & 2.835 & 0.903 & 1.518 e-3\\
		& &$\eta$& 0.50 & 0.450 & 0.140 & 1.885 e-5 \\
		& &$\alpha$	&0.25 & 0.497 & 1.337 & 0.000  \\
		& &$\beta$	& 0.75 & 0.791 & 0.706 & 0.000  \\ \hline
		
		\multirow{5}{*}{$n = 400$}&	\multirow{5}{*}{\( 2329 \)}	& $\theta_{\nu, 1}$ & 5.00 & 5.207 & 0.653 & 0.056 \\
		& &$\theta_{\nu, 2}$ & 1.25 & 1.294 & 0.638 & 0.572 \\
		& &$\theta_{\nu, 3}$ & 2.50 & 2.609 & 0.446 & 0.070 \\
		& &$\eta$ &0.50 & 0.480 & 0.064 & 4.402 e-3 \\
		& &$\alpha$ & 0.25 & 0.239 & 0.410 & 0.000 \\
		& &$\beta$ & 0.75 & 0.740 & 0.166 & 1.879 e-4 \\ \hline
		
		\multirow{5}{*}{$n = 1600$}&	\multirow{5}{*}{\( 9328 \)}	&$\theta_{\nu, 1}$ & 5.00 & 5.066 & 0.375 & 0.223 \\ 
		& &$\theta_{\nu, 2}$ & 1.25 & 1.263 & 0.275 & 0.851 \\
		& &$\theta_{\nu, 3}$ & 2.50 & 2.529 & 0.223 & 0.900 \\
		& &$\eta$ &  0.50 & 0.494 & 0.033 & 2.671 e-3 \\
		& &$\alpha$ & 0.25 & 0.242 & 0.222 & 1.110 e-16 \\
		& &$\beta$ & 0.75 & 0.749 & 0.083 & 0.168 \\ \hline
		
		\multirow{5}{*}{$n = 6400$}&	\multirow{5}{*}{\( 37317 \)}	&$\theta_{\nu,1}$ & 5.00 & 5.034 & 0.188 & 0.638 \\
		&&$\theta_{\nu,2}$&1.25 & 1.260 & 0.136 & 0.851 \\
		& &$\theta_{\nu,3}$& 2.50 & 2.514 & 0.112 & 0.678 \\
		& &$\eta$& 0.50 & 0.496 & 0.017 & 0.157 \\
		& &$\alpha$	&0.25 & 0.251 & 0.119 & 0.486 \\
		& &$\beta$	& 0.75 & 0.749 & 0.040 & 0.412 \\ \hline
    \end{tabular}
  \end{adjustbox}
  \caption{The results of estimating the parameters using the ML method for Model 1.}\label{tab:sim-study-sum-results-1}
\end{table}

\begin{figure}[h!]
  \centering \includegraphics[width=\linewidth]{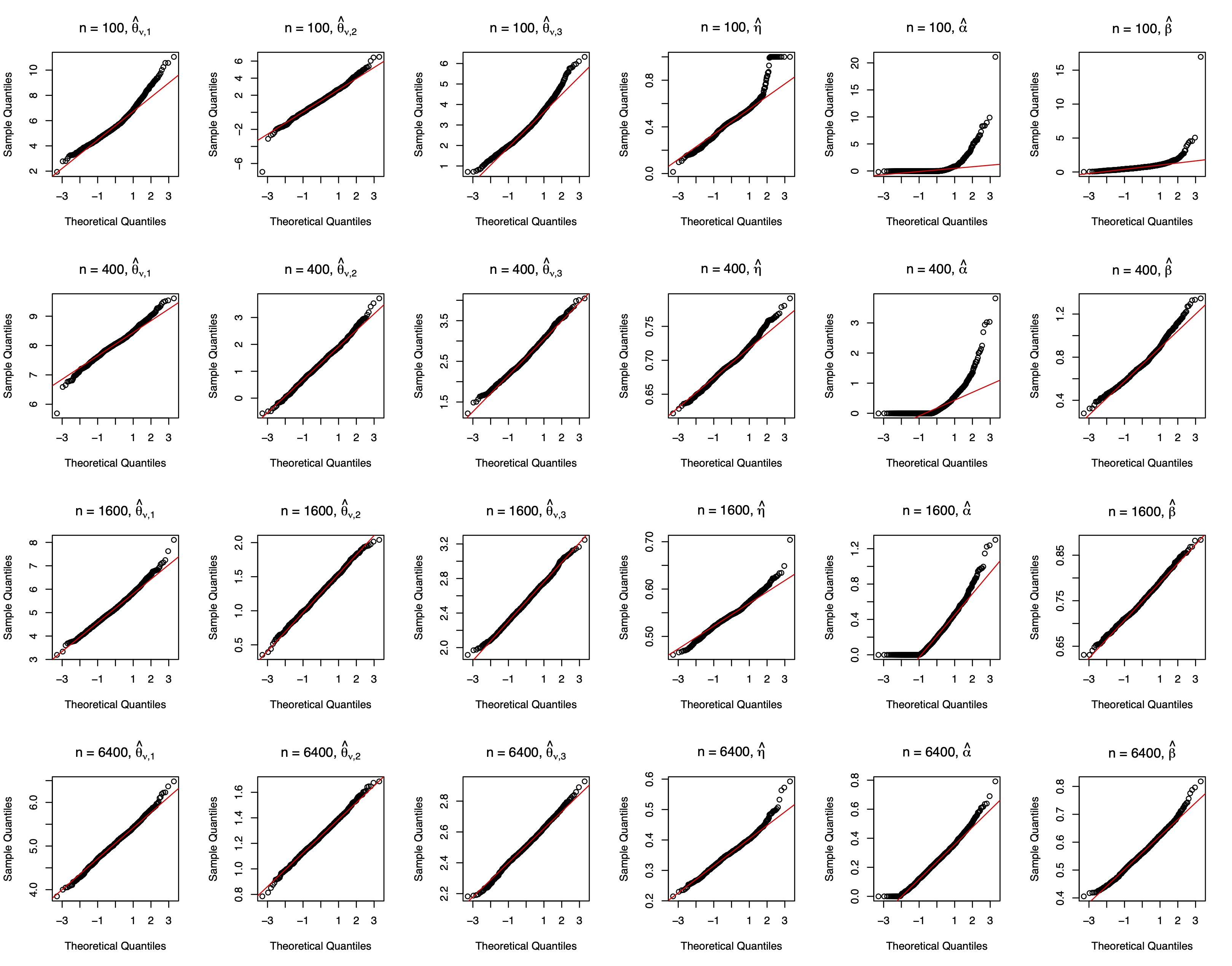}
  \caption{Normal QQ-plots of the estimated parameters for Model 1. The rows corresponds to $n$ = 100, 400, 1600 and
    6400 from top to bottom, and the columns corresponds to the parameters $\hat{\theta}_{\nu, 1}$,
    $\hat{\theta}_{\nu, 2}$, $\hat{\theta}_{\nu, 3}$, $\hat{\eta}$, $\hat{\alpha}$ and
    $\hat{\beta}$.}\label{fig:sim-study-qq-GP}
\end{figure}

\begin{table}[hbt]
	\centering
	\begin{adjustbox}{width=1\textwidth}
	\begin{tabular}{>{\centering\arraybackslash}m{0.15\linewidth} c c c c c c c}
		\toprule
		Model 2 (Gamma): & Mean \({N^n_1}\)&Par.&True&Mean Est.&Emp. S.E. &  p-value (KS test)\\\hline
		
		\multirow{5}{*}{$n = 100$}&	\multirow{5}{*}{\( 576 \)}	&$\theta_{\nu,1}$ & 5 & 5.737 & 1.288 & 0.003 \\
		&&$\theta_{\nu,2}$& 1.25 &1.484 & 1.562 & 0.007 \\ 
		& &$\theta_{\nu,3}$& 2.5 & 2.933 & 1.070 & 0.000  \\ 
		& &$\eta$& 0.5 &0.417 & 0.161 & 0.000 \\ 
		& &$\alpha$	&2 & 4.768 & 6.797 & 0.000 \\ 
		& &$\beta$	& 0.5 & 0.564 & 1.267 & 0.000 \\ \hline
		
		
		\multirow{5}{*}{$n = 400$}&	\multirow{5}{*}{\( 2331 \)}	&$\theta_{\nu,1}$ & 5 & 5.205 & 0.680 & 0.322 \\
		&&$\theta_{\nu,2}$&1.25 & 1.332 & 0.585 & 0.863 \\
		& &$\theta_{\nu,3}$& 2.5 & 2.604 & 0.431 & 0.159 \\
		& &$\eta$& 0.5 & 0.478 & 0.059 & 0.170 \\
		& &$\alpha$	&2 & 2.302 & 0.813 & 0.000 \\
		& &$\beta$	& 0.5 & 0.491 & 0.206 & 0.000 \\ \hline
		
		
		\multirow{5}{*}{$n = 1600$}&	\multirow{5}{*}{\( 9323 \)}	&$\theta_{\nu,1}$ & 5 & 5.022 & 0.340 & 0.839 \\
		&&$\theta_{\nu,2}$& 1.25 &1.289 & 0.249 & 0.991 \\
		& &$\theta_{\nu,3}$& 2.5 &2.552 & 0.187 & 0.998 \\
		& &$\eta$& 0.5 & 0.494 & 0.027 & 0.780 \\
		& &$\alpha$	&2 & 2.069 & 0.282 & 0.517 \\
		& &$\beta$	& 0.5 & 0.496 & 0.095 & 0.842 \\ \hline
		
	\end{tabular}
      \end{adjustbox}
\caption{The results of estimating the parameters using the ML method for Model 2.}\label{tab:sim-study-sum-results-2}
\end{table}

\begin{figure}[h!]
  \centering \includegraphics[width=\linewidth]{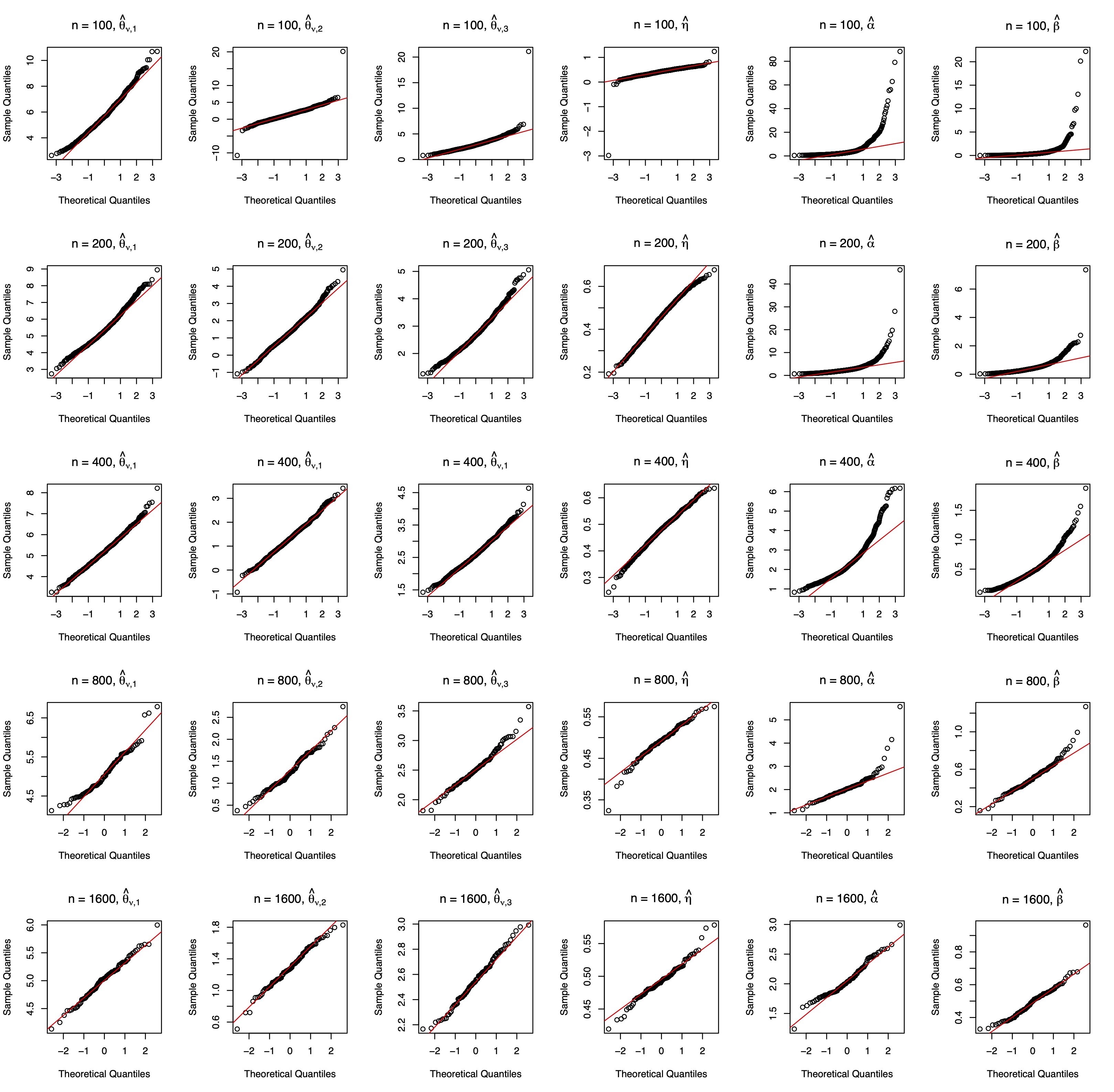}
  \caption{Normal QQ-plots of the estimated parameters for Model 2. The rows correspond to $n$ = 100, 200, 400, 800 and
    1600 from top to bottom, and the columns correspond to the parameters $\hat{\theta}_{\nu, 1}$,
    $\hat{\theta}_{\nu, 2}$, $\hat{\theta}_{\nu, 3}$, $\hat{\eta}$, $\hat{\alpha}$ and
    $\hat{\beta}$.}\label{fig:sim-study-qq-GM}
\end{figure}

\begin{table}[hbt]
  \centering
  \begin{adjustbox}{width=1\textwidth}
    \begin{tabular}{>{\centering\arraybackslash}m{0.15\linewidth} c c c c c c c}
      \toprule
      Model 3 $\qquad$ (Weibull): & Mean \({N^n_1}\)&Par.&True&Mean Est.&Emp. S.E. &  p-value (KS test)\\\hline
		
      \multirow{5}{*}{$n = 100$}&	\multirow{5}{*}{\( 581 \)}	&$\theta_{\nu,1}$ & 5& 5.737 & 1.306 & 0.010 \\
                                  &&$\theta_{\nu,2}$& 1.25 & 1.510 & 1.488 & 0.143 \\
                                  & &$\theta_{\nu,3}$& 2.5 & 2.900 & 0.892 & 0.013 \\
                                  & &$\eta$& 0.5 & 0.421 & 0.126 & 0.352 \\
                                  & &$\alpha$	& 2 & 1.503 & 1.328 & 0.000 \\
                                  & &$\beta$	& 1.128 & 1.103 & 1.325 & 0.000 \\ \hline
		
		
		\multirow{5}{*}{$n = 400$}&	\multirow{5}{*}{\( 2325 \)}	&$\theta_{\nu,1}$ & 5 & 5.251 & 0.690 & 0.398 \\
		&&$\theta_{\nu,2}$& 1.25 & 1.366 & 0.600 & 0.266 \\ 
		& &$\theta_{\nu,3}$& 2.5 & 2.614 & 0.427 & 0.263 \\
		& &$\eta$& 0.5 &0.471 & 0.061 & 0.853 \\ 
		& &$\alpha$	&2 &1.056 & 0.171 & 0.000 \\ 
		& &$\beta$	& 1.128 & 0.963 & 0.234 & 0.001 \\  \hline
		
		
		\multirow{5}{*}{$n = 1600$}&	\multirow{5}{*}{\( 9332 \)}	&$\theta_{\nu,1}$ & 5 & 5.043 & 0.331 & 0.479 \\ 
		&&$\theta_{\nu,2}$& 1.25 &1.251 & 0.254 & 0.905 \\ 
		& &$\theta_{\nu,3}$& 2.5 & 2.523 & 0.199 & 0.625 \\
		& &$\eta$& 0.5 & 0.496 & 0.026 & 0.772 \\
		& &$\alpha$	&2& 2.028 & 0.187 & 0.394 \\ 
		& &$\beta$	& 1.128 & 1.128 & 0.066 & 0.618 \\ \hline
    \end{tabular}
  \end{adjustbox}
  \caption{The results of estimating the parameters using the ML method for Model 3.}\label{tab:sim-study-sum-results-3}
\end{table}

\begin{figure}[h!]
  \centering \includegraphics[width=\linewidth]{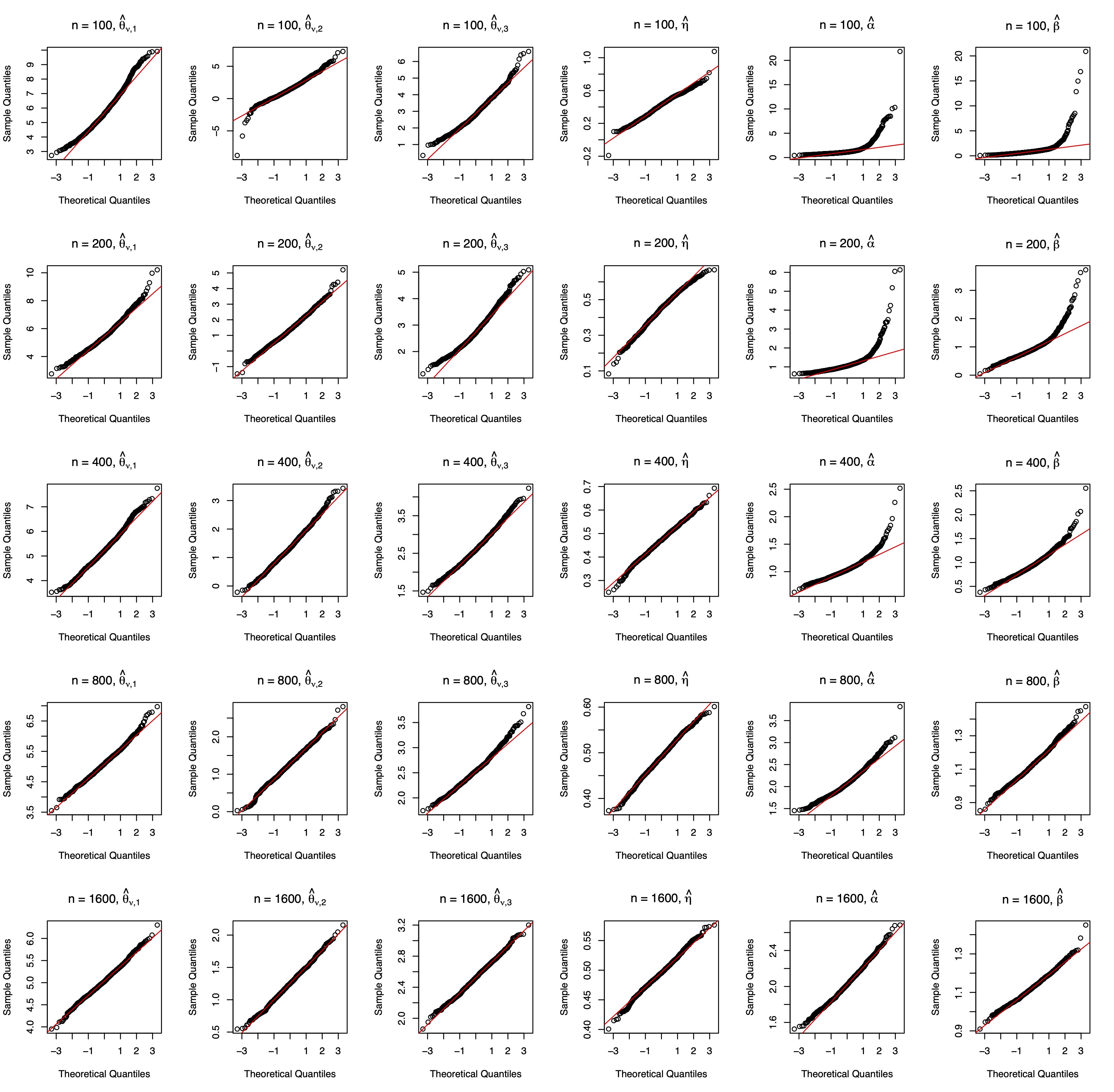}
  \caption{Normal QQ-plots of the estimated parameters for Model 3. The rows correspond to $n$ = 100, 200, 400, 800 and
    1600 from top to bottom, and the columns correspond to the parameters $\hat{\theta}_{\nu, 1}$,
    $\hat{\theta}_{\nu, 2}$, $\hat{\theta}_{\nu, 3}$, $\hat{\eta}$, $\hat{\alpha}$ and
    $\hat{\beta}$.}\label{fig:sim-study-qq-WB}
\end{figure}
\section{Conclusion}\label{sec:conclusion}

\begin{appendix}\label{appendix}

  \section{Preparation of proofs}\label{sec:preperation-proofs}
  Throughout these appendices, we write $K$ for a finite and positive constant, and $K_p$ for some finite and positive
  constant that depends only on $p$, both of which might change from line to line, since we are only interested in their
  bounding qualities.
  \begin{lemma}\label{lm:prep-proofs-lm1}
    For $p \in \N$, under \ref{cond:C1}, \ref{cond:C2} and \hyperref[cond:C3]{[C3](p)}, for any $j \in \{0,1,2\}$ and
    $n \in \N$, we have
    \begin{enumerate}
    \item [{(i)}]
      $\sup_{t \in [0, n]} \sup_{\theta \in \Theta} \sum_{j=0}^2 \Vert \partial_\theta^{\otimes j} \lambda^n_{t, \theta}
      \Vert_p < \infty$;
    \item [{(ii)}]
      $\sup_{t \in [0, b_n]} \sup_{x \in [0,1]} \sup_{\theta \in \Theta} \sum_{j=0}^2 \Vert \partial_\theta^{\otimes
        j}\lambda^{x,n}_{t, \theta} \Vert_p < \infty$;
    \item[{(iii)}]
      $\sup_{t \in \R_+} \sup_{x \in [0,1]} \sup_{\theta \in \Theta} \sum_{j=0}^2 \Vert \partial_\theta^{\otimes j}
      \lambda^{x,n,c}_{t, \theta} \Vert_p < \infty$;
    \item[{(iv)}]
      $\sup_{t \in \R} \sup_{x \in [0,1]} \sup_{\theta \in \Theta} \sum_{j=0}^2 \Vert \partial_\theta^{\otimes j}
      \dot{\lambda}^{x,n,c}_{t, \theta} \Vert_p < \infty$.
    \end{enumerate}
  \end{lemma}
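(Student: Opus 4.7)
All four intensities are Hawkes-type affine functionals of an underlying counting process, with (absolute values of) baselines uniformly bounded in $t,\theta$ under \ref{cond:C1}, \ref{cond:C2} and compactness of $\Theta,[0,1]$, and with kernel $g$ or its $\theta$-derivatives up to order 2 having finite $L^p$ norm under \hyperref[cond:C3]{[C3]($p$)}. The plan is to establish (i) by induction on $p\in\N$ (first for $j=0$, then for $j=1,2$), and then observe that (ii)--(iv) follow by the same template since they differ from (i) only by additional bounded predictable terms, a change of baseline to a constant $\nu(x;\theta)\in[\underline\nu,\bar\nu]$, or a passage to a stationary version.

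For the base case $j=0,\,p=1$: taking expectations in \eqref{eq:1} yields the renewal inequality $\E[\lambda^n_t]\le\bar\nu+\int_0^t g(t-s;\theta)\E[\lambda^n_s]\rmd s$, which iterates to $\E[\lambda^n_t]\le\bar\nu/(1-\bar\eta)$ uniformly in $t,n,\theta$. For $p\ge 2$, induct: decompose
\[
\int_0^{t-}g(t-s;\theta)\rmd N^n_s \;=\; \int_0^t g(t-s;\theta)\lambda^n_s\rmd s + M^n_t,
\]
with $M^n_t$ a local martingale, apply Minkowski's integral inequality to the predictable compensator to bound it by $\bar\eta\sup_{s\le t}\Vert\lambda^n_s\Vert_p$, and apply Burkholder--Davis--Gundy combined with Jensen's inequality on $x\mapsto x^{p/2}$ to bound $\Vert M^n_t\Vert_p$ by a constant multiple of $\Vert g\Vert_{L^2}\sup_s\Vert\lambda^n_s\Vert_{p/2}^{1/2}+\Vert g\Vert_{L^p}\sup_s\Vert\lambda^n_s\Vert_1^{1/p}$; both are finite by the induction hypothesis and \hyperref[cond:C3]{[C3]($p$)} (interpolated with the $L^1$ bound on $g$ available from \ref{cond:C1}). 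The constraint $\bar\eta<1$ absorbs the compensator contribution into the left-hand side after taking the supremum over $s\le t$, closing the Gr\"onwall loop. For $j\in\{1,2\}$, differentiating \eqref{eq:1} gives $\partial_\theta^{\otimes j}\lambda^n_t=\partial_\theta^{\otimes j}\nu(t/n;\theta)+\int_0^{t-}\partial_\theta^{\otimes j}g(t-s;\theta)\rmd N^n_s$; the same compensator/BDG decomposition applied to the signed kernel $\partial_\theta^{\otimes j}g$ leaves a bound that is linear in $\sup_s\Vert\lambda^n_s\Vert_p$ rather than in $\sup_s\Vert\partial_\theta^{\otimes j}\lambda^n_s\Vert_p$, so there is no self-reference and the $j=0$ bounds feed directly into the $j=1,2$ bounds, with $\Vert \partial_\theta^{\otimes j}\nu\Vert_\infty$ bounded via \ref{cond:C2} and compactness.

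For (ii)--(iv): $\lambda^{x,n}_t$ differs from $\lambda^n_t$ only by the additional predictable past-history term $\int_0^{xn-}g(xn+t-s;\theta)\rmd N^n_s$, whose $L^p$ norm is controlled by the same compensator/BDG machinery using the already-established bounds on $\lambda^n$; $\lambda^{x,n,c}_t$ has constant baseline $\nu(x;\theta)\in[\underline\nu,\bar\nu]$ and is therefore a direct instance of the (i) argument; and $\dot\lambda^{x,n,c}_t$ is stationary in $t$, so the bound at a single time reduces to the $t\to\infty$ limit of the bound for $\lambda^{x,n,c}_t$ (or equivalently follows from the Poisson cluster representation of the stationary Hawkes process with branching ratio $\eta<\bar\eta<1$). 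The main obstacle is the self-referential character of the induction step in the $j=0$ case: the BDG bound on $M^n_t$ involves a stochastic integral driven by $\rmd N^n$ whose intensity is $\lambda^n$ itself, so closing the implicit inequality hinges simultaneously on $\bar\eta<1$ (to absorb the predictable part) and on Jensen's convexity (to re-express the BDG contribution in terms of the strictly lower moment $\Vert\lambda^n\Vert_{p/2}$ already supplied by the induction hypothesis); keeping the bounds uniform in $x,\theta$ throughout requires the $L^q$ norms of $g$ and its derivatives to be uniform in $\theta\in\Theta$, which is the role of the compactness plus continuity hypotheses behind \ref{cond:C1}--\hyperref[cond:C3]{[C3]($p$)}.
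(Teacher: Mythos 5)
Your proposal is correct and follows essentially the same route as the paper's proof: induction on $p$, splitting the stochastic integral into its compensator and a compensated part, a BDG-type bound (the paper invokes Lemma~A.2 of \cite{clinet2017statistical}) combined with H\"older/Jensen to reduce to lower moments supplied by the induction hypothesis, absorption of the compensator via $\bar{\eta}<1$, and then feeding the $j=0$ bounds into the $j=1,2$ cases and into (ii)--(iv). The only cosmetic difference is that you close the fixed-point inequality with Minkowski's integral inequality on the $\mathbb{L}_p$ norms, whereas the paper uses the weighted power-mean inequality $(x+y)^p\le(1+\tilde\epsilon)^{p-1}x^p+(1+\tilde\epsilon^{-1})^{p-1}y^p$ and chooses $\tilde\epsilon$ so that $(1+\tilde\epsilon^{-1})^{2p-2}\bar\eta^p<1$; both yield the same conclusion.
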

\subsection*{Proof of Lemma~\ref{lm:prep-proofs-lm1}}
While this proof follows a similar structure as the proofs of Lemma~S.1 of \cite{kwan2022alternative} and Lemma~8 of
\cite{kwan2024ergodic}, wefirst give details required to show $(i)$. Let us consider the case when $j = 0$, and
$\theta = \theta^0$ first. We will prove by induction on $p$ that
$\sup_{t \in [0,n]} \Vert \lambda^n_{t, \theta^0} \Vert_p < \infty$. For the initialising case when $p = 1$, we have
\begin{align*}
  \Vert \lambda^n_{t, \theta^0} \Vert= {}  \nu(\frac{t}{n}; \theta^0) + \int_{0}^{t^-} g(t-s; \theta^0) \mathbb{E} [\lambda^n_{s, \theta^0} ] \rmd s 	\le {}  \bar{\nu}+ \bar{\eta} \sup_{t \in [0,n]} \mathbb{E} [ \lambda^n_{t, \theta^0}].
\end{align*}
From there, it is straightforward to see that
$\sup_{t \in [0,n]} \Vert \lambda^n_{t, \theta^0} \Vert \le \bar{\nu}/(1-\bar{\eta}) < \infty$. For $p >1$, from the
inequality that $ (x + y)^{p} \le (1+\tilde{\epsilon})^{p-1} x^p + (1+\tilde{\epsilon}^{-1})^{p-1}y^p$ for
$x,y,\tilde{\epsilon} >0$, we can bound $\mathbb{E}[ \vert \lambda^n_{t, \theta^0} \vert^p]$ by
\[ (1+\tilde{\epsilon})^{p-1}\bar{\nu} ^p + (1+\tilde{\epsilon}^{-1})^{p-1} \mathbb{E} \Big[\Big(\int_{0}^{t^-}
  g(t-s;\theta^0) \rmd N^n_s \Big)^p \Big]. \] The first term is clearly bounded under~\hyperref[cond:C1]{[C1]}. Using
the same inequality to get the above bound, $\mathbb{E} [(\int_{0}^{t^-} g(t-s;\theta^0) \rmd N^n_s )^p]$ is bounded by
\begin{equation*}
  (1+\tilde{\epsilon})^{p-1} \mathbb{E} \Big[\Big(\int_{0}^{t^-} g(t-s;\theta^0) \rmd \bar{N}^n_s \Big)^p \Big] +  (1+\tilde{\epsilon}^{-1})^{p-1} \mathbb{E} \Big[\Big(\int_{0}^{t^-} g(t-s;\theta^0) \lambda^n_{s, \theta^0} \rmd s \Big)^p\Big].
\end{equation*}
An application of Lemma A.2 of~\cite{clinet2017statistical} gives the following bound for
$ \mathbb{E} [(\int_{0}^{t^-} g(t-s;\theta^0) \rmd \bar{N}^n_s )^p] $:
\begin{equation*}
  K_p \Big\{ \mathbb{E} \Big[\int_{0}^{t^-} g(t-s;\theta^0)^{p} \lambda^n_{s, \theta^0} \rmd s \Big] + \mathbb{E} \Big[ \Big( \int_{0}^{t^-} g(t-s;\theta^0)^{2} \lambda^n_{s, \theta^0} \rmd s \Big)^{p/2} \Big]  \Big\}.
\end{equation*}
Since $g(\cdot; \theta^0)^p$ is integrable and $\sup_{t \in [0,n]} \Vert \lambda^n_{t, \theta^0} \Vert < \infty$, we
deduce that the first term above is bounded. Recall from H\"{o}lder's inequality that
$(\int f g)^k \le (\int f^k g)(\int g)^{k-1}$ for $k >1$ and non-negative functions $f$, $g$, we have the following
bound for $\mathbb{E} [ ( \int_{}^{t^-} g(t-s;\theta^0)^{i} \lambda^n_{s, \theta^0} \rmd s )^{j} ] $:
\begin{align*}
  {} & \mathbb{E} \bigg[ \Big( \int_{0}^{t^-} g(t-s;\theta^0)^{i} \big(\lambda^n_{s, \theta^0} \big)^{j}  \rmd s \Big) \Big( \int_{0}^{t^-} g(t-s;\theta^0)^i \rmd s \Big)^{j-1}  \bigg] \\
  \le {} & \sup_{t \in [0,n]} \mathbb{E} \Big[\big(\lambda^n_{t, \theta^0} \big)^j \Big] \Big(\int_{0}^{t^-} g(t-s; \theta^0)^i \rmd s \Big)^{j}.
\end{align*}
For $i \in [1,p]$ and $j \in [1,p)$, the above bound is finite under the assumed conditions. To this end, we have shown
that
\[ \mathbb{E} \big[\vert \lambda^n_{t, \theta^0} \vert^p \big] \le (1+\tilde{\epsilon})^{p-1} \bar{\nu}^p + \big( (1 +
  \tilde{\epsilon}) (1+\tilde{\epsilon}^{-1})\big)^{p-1} K_q + (1 + \tilde{\epsilon}^{-1})^{2p - 2} \bar{\eta}^p \sup_{t
    \in [0, n]} \mathbb{E} \big[\vert \lambda^n_{t, \theta^0} \vert^p \big]. \] By taking the supremum in $t$ over
$[0,n]$ and rearranging, it follows that $\sup_{t \in [0, n]} \Vert \lambda^n_{t, \theta^0} \Vert_p$ is bounded.
	
To generalise the result such that it holds for any $\theta \in \Theta$. We will do so by induction on $p$ again. For
the initialising case when $p = 1$, we have
\begin{align*}
  \big\Vert \lambda^n_{t, \theta}  \Vert \le {}  \bar{\nu} + \mathbb{E} \bigg[  \int_{0}^{t^-} g(t-s; \theta) \lambda_{s, \theta^0} \rmd s \bigg]
  \le {}  \bar{\nu} + \bar{\eta}  \sup_{t \in [0,n] } \mathbb{E} [ \lambda^n_{t, \theta^0}] < \infty.
\end{align*}
For $p >1$, by an argument similar to before, $\mathbb{E}[ (\lambda^n_{t, \theta})^p]$ is bounded by
\[ (1+\tilde{\epsilon})^{p-1} \bar{\nu}^p + (1+\tilde{\epsilon}^{-1})^{p-1} \mathbb{E} \Big[\Big( \int_{0}^{t^-}
  g(t-s;\theta) \rmd N^n_s \Big)^p\Big] \] and $\mathbb{E} [( \int_{0}^{t^-} g(t-s;\theta) \rmd N^n_s )^p]$ is bounded
by
\begin{align}
  K_p \bigg\{ & \mathbb{E} \Big[ \int_{0}^{t^-} g(t-s; \theta)^p \lambda^n_{s, \theta^0} \rmd s\Big] + \mathbb{E} \Big[ \Big( \int_{0}^{t^-} g(t-s; \theta)^{2} \lambda^n_{s, \theta^0} \rmd s \Big)^{p/2} \Big]  \nonumber \\
              & + \mathbb{E} \Big[\Big( \int_{0}^{t^-} g(t-s; \theta) \lambda^n_{s, \theta^0} \rmd s \Big)^p \Big] \bigg\}. \label{eq:Chap_4_step_1_eq1}
\end{align}
By H\"{o}lder's inequality again, we have for $i, j \ge 1$,
\begin{align}
  {}  \mathbb{E} \bigg[ \Big( \int_{0}^{t^-} g(t-s; \theta)^i \lambda^n_{s, \theta^0} \rmd s \Big)^j \bigg]  
  \le {} & \mathbb{E} \bigg[ \Big( \int_{0}^{t^-} g(t-s; \theta)^i \big( \lambda^n_{s, \theta^0}\big)^j \rmd s \Big) \Big( \int_{0}^{t^-} g(t-s; \theta)^i \rmd s \Big)^{j-1} \bigg]  \nonumber \\
  \le {} &  \Big( \int_{0}^{t^-} g(t-s; \theta)^i \rmd s \Big)^{j} \sup_{t \in [0,n] } \mathbb{E}\big[ (\lambda^n_{t, \theta^0})^p  \big]. \label{eq:Chap_4_Holder_ineq_eq1}
\end{align}
Under the assumed conditions, induction hypothesis and the above inequality, the first two terms of
\eqref{eq:Chap_4_step_1_eq1} are trivially bounded, and the last term of \eqref{eq:Chap_4_step_1_eq1} is bounded by
$\bar{\eta}^p \mathbb{E}[ (\lambda^n_{t, \theta^0})^p ]$. Collecting these derived bounds and taking the supremum in $t$
and $\theta$ over $[0,n]$ and $\Theta$ respectively, this shows that
$\sup_{t \in \mathbb{R}} \sup_{\theta \in {\Theta}} \Vert \lambda^n_{t, \theta} \Vert_p$ is bounded. Finally, by the
boundedness of $\vert \partial_\theta^{\otimes j} \nu(x; \theta)\vert$ and integrability of
$\vert \partial_\theta^{\otimes j} g(t; \theta) \vert^p$, and a straightforward adaptation of the proof above, it can be
shown that
$\sup_{t\in [0,n]} \sup_{\theta \in \Theta} \Vert \partial_\theta^{\otimes j} \lambda^n_{t, \theta} \Vert_p < \infty$
for $j \in \{1, 2\}$.
\begin{remark}
  \label{remark:prep-proofs-remark1}
  The above lemma holds for non-integer $p$ as well since the BDF inequality holds for all $p \ge 1$. However, for
  $p \in (1, 2)$, due to the quadratic variation term in the BDG inequality, we require $g$ to be square integrable in
  order for the intensity process to be bounded in $\mathbb{L}_p$. For $p \ge 2$, the above proof can be easily adapted
  to achieve the desired result.
\end{remark}

\begin{lemma}\label{lm:prep-proofs-lm2}
  Under \ref{cond:C1}, \ref{cond:C2} and \hyperref[cond:C3]{[C3](p)}, for any $j \in \{0,1,2\}$, there exists $\gamma_n \rightarrow \infty$, $b_n \rightarrow
  \infty$ with $\gamma_n = o(b_n)$ such that
  \begin{enumerate}
  \item [{(i)}] $\sup_{t \in [\gamma_n, b_n]} \sup_{x \in [0,1]} \sup_{\theta \in \Theta} \Vert \partial_\theta^{\otimes j} \lambda^{x,n}_{t, \theta} -
    \partial_\theta^{\otimes j} \lambda^{x,n,c}_{t, \theta} \Vert_p = o(1)$;
    \item[{(ii)}] $\sup_{t \in [\gamma_n, b_n]} \sup_{x \in [0,1]} \sup_{\theta \in \Theta} \Vert \partial_\theta^{\otimes j} \lambda^{x,n,c}_{t, \theta} -
      \partial_\theta^{\otimes j} \dot{\lambda}^{x,n,c}_{t, \theta} \Vert_p = o(1)$.
  \end{enumerate}
\end{lemma}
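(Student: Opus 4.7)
The plan is to couple the processes on a common probability space, express each difference as a Hawkes-type fixed-point equation driven by a small ``forcing'' term, and close the recursion using the sub-criticality $\bar{\eta} < 1$ rather than by Gronwall.

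For part $(i)$, couple $N^{x,n}$ and $N^{x,n,c}$ so that they are driven by a common Poisson basis on $[0, b_n]$. Subtracting the two intensity formulas gives
\[
\lambda^{x,n}_{t,\theta} - \lambda^{x,n,c}_{t,\theta} = A^n_t(x,\theta) + B^n_t(x,\theta) + \int_0^{t-} g(t-s;\theta) \rmd \bigl(N^{x,n}_s - N^{x,n,c}_s\bigr),
\]
where $A^n_t(x,\theta) = \nu(x+t/n;\theta)-\nu(x;\theta)$ and $B^n_t(x,\theta) = \int_0^{xn-} g(xn+t-s;\theta) \rmd N^n_s$. The term $A^n_t$ is $O(b_n/n) = o(1)$ uniformly by the Lipschitz equicontinuity in \ref{cond:C2}, since $t \le b_n$ and $b_n = o(n)$. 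For $B^n_t$, because $t \ge \gamma_n$, the kernel is evaluated only at arguments $\ge \gamma_n$; a BDG-plus-H\"{o}lder decomposition identical to the one used in the proof of Lemma~\ref{lm:prep-proofs-lm1}, combined with Lemma~\ref{lm:prep-proofs-lm1}(i), gives $\Vert B^n_t \Vert_p \le K (\int_{\gamma_n}^\infty \vert g(u;\theta)\vert^p \rmd u)^{1/p}$, which tends to zero uniformly in $(x,\theta)$ by \hyperref[cond:C3]{[C3]($p$)} once $\gamma_n \to \infty$. Applying the same BDG estimate to the third term then produces the integral inequality in $\Vert \lambda^{x,n}_{t,\theta} - \lambda^{x,n,c}_{t,\theta} \Vert_p$ that will be closed in the last paragraph.

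For part $(ii)$, couple $\dot{N}^{x,n,c}$ and $N^{x,n,c}$ so that they share a Poisson basis on $[0,\infty)$. Then
\[
\dot{\lambda}^{x,n,c}_{t,\theta} - \lambda^{x,n,c}_{t,\theta} = \int_{-\infty}^{0} g(t-s;\theta) \rmd \dot{N}^{x,n,c}_s + \int_0^{t-} g(t-s;\theta) \rmd \bigl(\dot{N}^{x,n,c}_s - N^{x,n,c}_s\bigr).
\]
Stationarity of $\dot{\lambda}^{x,n,c}$ and Lemma~\ref{lm:prep-proofs-lm1}(iv) bound the first integral in $\mathbb{L}_p$ by $K (\int_t^\infty \vert g(u;\theta)\vert^p \rmd u)^{1/p}$, which is $o(1)$ uniformly in $(x,\theta)$ for $t \ge \gamma_n$ by \hyperref[cond:C3]{[C3]($p$)}. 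The second integral is again a self-exciting propagation of the first. The derivative cases $j \in \{1,2\}$ are handled identically: differentiating the defining intensity equations yields analogous Hawkes-type identities in $\partial_\theta^{\otimes j} \lambda$ driven by $\partial_\theta^{\otimes j} g$, for which \hyperref[cond:C3]{[C3]($p$)} again supplies the requisite integrability, and Lemma~\ref{lm:prep-proofs-lm1} supplies the necessary uniform $\mathbb{L}_p$-boundedness.

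The main obstacle will be closing the recursion uniformly on a block whose width $b_n$ diverges. A naive Gronwall argument produces a constant growing like $\exp(\bar{\eta} b_n)$, which is useless here. The correct argument exploits sub-criticality: the self-exciting operator $f \mapsto \int_0^{t-} g(t-s;\theta) f(s) \rmd s$ has operator norm at most $\bar{\eta} < 1$ on the relevant space of bounded $\mathbb{L}_p$-valued functions, so iterating the recursion gives a convergent Neumann series and a bound of order $\varepsilon_n/(1 - \bar{\eta})$, where $\varepsilon_n$ is the supremum of the forcing terms identified above. Choosing $\gamma_n \to \infty$ with $\gamma_n = o(b_n)$ slowly enough that $\sup_\theta \int_{\gamma_n}^\infty \vert \partial_\theta^{\otimes j} g(u;\theta)\vert^p \rmd u \to 0$ for $j \in \{0,1,2\}$ is then a routine consequence of \hyperref[cond:C3]{[C3]($p$)} and the compactness of $\Theta$.
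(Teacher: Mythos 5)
Your proposal is correct and follows essentially the same route as the paper: a Poisson-embedding coupling, the same decomposition of the difference into a baseline term (controlled by \ref{cond:C2}), an edge term supported on kernel arguments beyond $\gamma_n$ (controlled by the tail integrability in \hyperref[cond:C3]{[C3]($p$)} together with Lemma~\ref{lm:prep-proofs-lm1}), and a self-excitation propagation term closed by iterating the convolution inequality and summing the resulting Neumann series using $\bar{\eta}<1$, exactly as the paper does via the bound $G^{\ast(i)}\le\bar{\eta}^i$ rather than Gronwall.
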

\subsection*{Proof of Lemma~\ref{lm:prep-proofs-lm2}}
We only give details to show (i) since (ii) follow via a straightforward adaptation of the the proof of (i). Let us
consider the case when $p = 1$, $\theta = \theta^0$ and $j = 0$ first. By Poisson embedding and Fubini's theorem,
$\E [ \vert \lambda^{x,n}_{t, \theta^0} - \lambda^{x,n,c}_{t, \theta^0} \vert ]$ is bounded by
\begin{align*}
  & \big\vert \nu(x + \frac{t}{n}; \theta^0) - \nu(x; \theta^0) \big\vert + \int_0^{nx-} g(t + nx -s; \theta^0) \E\big[ \lambda^n_{s, \theta^0} \big] \rmd s \\
  + {} & \int_0^{t-} g(t-s; \theta^0) \E \big[ \big\vert \lambda^{x,n}_{s, \theta^0} - \lambda^{x,n,c}_{s, \theta^0} \big\vert \big] \rmd s.
\end{align*}
Under \hyperref[cond:C2]{[C2]}, $\nu(x; \theta)$ is continuous in $x$, and since
$\frac{t}{n} \le \frac{1}{a_n}$, the first term above can be made arbitrarily small by increasing $n$. Let
$G(t; \theta) = \int_0^t g(s; \theta) \rmd s$, by Lemma~\ref{lm:prep-proofs-lm1},
$\sup_{t \in [0,n]} \E [\lambda^n_{t, \theta^0}]$ is bounded and since $t \in [\gamma_n, b_n]$, the second term above is
bounded by
\begin{equation*}
  K \int_0^{nx} g(t + nx - s;\theta^0) \rmd s \le K \big( G(b_n + n; \theta^0) - G(\gamma_n; \theta^0) \big)
\end{equation*}
which converges to 0 as $n$ tends to infinity. Denote $ (a \ast b)(t) = \int_0^t a(t-s) b(s) \rmd s$ to be the
convolution between $a(\cdot)$ and $b(\cdot)$, $a^{\ast(i)}(\cdot)$ the $i$-th convolution of $a(\cdot)$ with itself and
$f(t; \theta) = \E [ \vert \lambda^{x,n}_{t, \theta} - \lambda^{x, n, c}_{t, \theta} \vert ]$, thus far, we have shown
that for any $\epsilon >0$, there exists an $N$ large enough such that for all $n > N$ and $t \in [\gamma_n, b_n]$, we
have
\begin{align}
  f(t ;\theta^0)  \le {} & \epsilon + \big(g (\cdot; \theta^0) \ast f(\cdot ;\theta^0) \big) (t) \label{eq:Chap_4_convol_eq1} \nonumber \\
  \le {} & \epsilon + \big(\tilde{\epsilon} \ast g(\cdot; \theta^0) \big) (t) + \big( g^{\ast(2)} (\cdot; \theta^0) \ast f(\cdot ; \theta^0) \big) (t) \nonumber \\
  \le {} & \epsilon\Big( 1 + \sum_{i=1}^k G^{\ast (i)}(t; \theta^0) \Big) + \big( g^{\ast (k+1)} (\cdot; \theta^0) \ast f(\cdot; \theta^0) \big) (t). \nonumber
\end{align}
Noting that $G^{\ast(i)}(t; \theta^0) \le \bar{\eta}^i$, we have
$\epsilon ( 1 + \sum_{i=1}^k G^{\ast (i)}(t; \theta^0) ) \le \frac{\epsilon}{1-\bar{\eta}}$. Furthermore, recall that
$g(\cdot; \theta) = \eta \tilde{g}(\cdot; \tilde{\theta})$ where $\tilde{g}(\cdot; \tilde{\theta})$ is the density
function of a positive random variable. Denote $\tilde{G}(t; \theta) = \int_0^t \tilde{g}(s; \tilde{\theta}) \rmd s$, by
Lemma~\ref{lm:prep-proofs-lm1} again, we can write
\begin{align*}
  \big( g^{\ast (k +1)} (\cdot; \theta^0) \ast f(\cdot ; \theta^0) \big) (t) = {} &  \int_0^t g^{\ast(k + 1)}(t-s; \theta^0) f(s; \theta^0) \rmd s \\
  \le {} & K \bar{\eta} ^{k+1} \tilde{G}^{\ast(k+1)}(t; \tilde{\theta}^0).
\end{align*}
Since $\tilde{G}^{\ast (k+1)}(t; \tilde{\theta^0}) \to 1$ as $n \to \infty$, thus for any $\tilde{\epsilon} > 0$, there
exists $K_1 > 0$ large enough such that for all $k > K_1$,
$\big( g^{\ast(k+1)} (\cdot; \theta^0) \ast f(\cdot; \theta^0) \big) (t) < \frac{\tilde{\epsilon}}{2}$. Collecting the
derived bounds from above, and let $\epsilon = \frac{(1 - \eta^0) \tilde{\epsilon}}{2}$, we have shown that there exist
$N$ large enough such that
$\sup_{t \in [\gamma_n, b_n]} \E [ \vert \lambda^{x,n}_{t, \theta^0} - \lambda^{x,n,c}_{t, \theta^0} \vert ] <
\tilde{\epsilon}$ whenever $n > N$.
	
To get a similar estimate for all $\theta \in \Theta$, by following the steps detailed above, it is straightforward to
see that for any $\epsilon >0$, there exist an $N$ large enough such that for all $n >N$, we have
\begin{equation*}
  f(t; \theta) \le \epsilon + \big(\ g(\cdot; \theta) \ast f(\cdot; \theta^0) \big) (t).
\end{equation*}
Note that there exists $\tilde{\gamma}_n \to \infty$ with $n$ such that $\tilde{\gamma}_n = o(\gamma_n)$ and
$\sup_{t \in [\tilde{\gamma}_n, b_n]} \E [\vert \lambda^{x,n}_{t, \theta^0} - \lambda^{x, n, c}_{t, \theta^0} \vert] =
o(1)$. In light of this, we have
\begin{align*}
  f(t; \theta) \le {} & \epsilon + \int_0^{t-} g(t-s; \theta) f(s; \theta^0) \rmd s \\
  = {} &  \epsilon + \int_0^{\tilde{\gamma}_n-} g(t-s; \theta) f(s; \theta^0) \rmd s + \int_{\tilde{\gamma}_n}^{t-} g(t-s; \theta) f(s; \theta^0) \rmd s\\
  < {} & \epsilon + K \big( G(t; \theta) - G(t - \tilde{\gamma}_n; \theta)\big) + \eta \sup_{t \in [\tilde{\gamma}_n, b_n]} \E \big[ \big\vert \lambda^{x,n}_{t, \theta^0} - \lambda^{x, n, c}_{t, \theta^0} \big \vert\big].
\end{align*}
Since $t \in [\gamma_n, b_n]$ and $\tilde{\gamma}_n = o(\gamma_n)$, there exists an $N$ large enough such that for all
$n > N$, $K (G(t; \theta) - G(t - \tilde{\gamma}_n; \theta) ) < \epsilon$. This shows $f(t; \theta)$ can be made
arbitrarily small by letting $n$ be large enough.
	
Similar to the case when $j = 0$, for $j \ge 1$,
$\E [ \vert \partial_\theta^{\otimes j} \lambda^{x,n}_{t, \theta} - \partial_\theta^{\otimes j} \lambda^{x, n, c}_{t,
  \theta} \vert ]$ is bounded by
\begin{align*}
  & \big \vert \partial_\theta^{\otimes j} \nu(x + \frac{t}{n}; \theta) - \partial_\theta^{\otimes j} \nu(x; \theta) \big\vert + \int_0^{nx-} \big\vert \partial_\theta^{\otimes j} g(t + nx - s; \theta) \big\vert \E \big[ \lambda^n_{s, \theta^0} \big] \rmd s \\
  + \ &  \E \Big[ \int_0^{t-} \big\vert \partial_\theta^{\otimes j} g(t-s; \theta) \big\vert \big\vert \lambda^{x,n}_{s, \theta^0} - \lambda^{x, n, c}_{s, \theta^0} \big\vert \rmd s\Big].
\end{align*}
By the assumed integrability of $\vert \partial_\theta^{\otimes j} g(\cdot; \theta) \vert$ and an adaptation of the
above proof for the case when $j = 0$, each term in the above expression can be made arbitrarily small by letting $n$ be
large enough.
	
Next, we extend the result for $p \ge 2$ by induction on $p$. Let us consider the case where $\theta = \theta^0$ first
again, $\E[ \vert \lambda^{x,n}_{t,\theta^0} - \lambda^{x, n, c}_{t, \theta^0} \vert^p]$ is bounded by
\begin{align*}
  &(1 + \epsilon)^{2(p-1)} \Big\{  \big\vert \nu(x + \frac{t}{n};\theta^0) - \nu(x ;\theta^0) \big\vert^p \\
  +{} &  \big( (1+ \epsilon) (1 + \epsilon^{-1})\big)^{p-1} \E \Big[ \Big( \int_0^{nx-} g(t + nx - s; \theta^0) \rmd N^n_s \Big)^p \Big] \\
  + {} & ( 1 + \epsilon^{-1})^{p-1} \E \Big[ \Big( \int_0^{t-} g(t-s; \theta^0) \big\vert \rmd N^{x,n}_s - \rmd N^{x,n,c}_s\big\vert \Big)^p \Big].
\end{align*}
Similar to before, $\vert \nu(x + \frac{t}{n}; \theta^0) - \nu(x; \theta^0) \vert^p$ can be made arbitrarily small by
increasing $n$. By Lemma A.2 of \cite{clinet2017statistical}, we have the following bound for
$\E [ ( \int_0^{nx-} g(t + nx - s; \theta^0) \rmd N^n_s )^p ]$:
\begin{align}
  K_p \Big\{ {} &\E \Big[ \int_0^{nx-} g(t + nx - s; \theta^0)^p  \lambda^n_{s, \theta^0}  \rmd s \Big] + \E \Big[ \Big( \int_0^{nx-} g(t + nx - s; \theta^0)^2 \lambda^n_{s, \theta^0} \rmd s \Big)^{p/2} \Big] \nonumber \\
  {} & +  \E \Big[ \Big( \int_0^{nx-} g(t + nx - s; \theta^0)\lambda^n_{s, \theta^0} \rmd s \Big)^p \Big]\Big\}. \label{eq:Chap_4_Step_2_eq1}
\end{align}
Similar to \eqref{eq:Chap_4_Holder_ineq_eq1}, by H\"{o}lder's inequality, for $i, k \ge 1$, we have
\begin{align*}
  & \E \Big[ \Big( \int_0^{nx-} g(t + nx - s; \theta^0)^i \lambda^n_{s, \theta^0} \rmd s \Big)^k \Big] \\
  \le {} & \Big( \int_0^{nx-} g(t + nx - s; \theta^0)^i  \rmd s \Big)^{k-1}  \E \Big[ \Big( \int_0^{nx-} g(t + nx - s; \theta^0)^i  \big( \lambda^n_{s, \theta^0} \big)^k \rmd s \Big) \Big]\\
  \le {} & K_k   \Big( \int_0^{nx-} g(t + nx - s; \theta^0)^i  \rmd s \Big)^k.
\end{align*}
By the integrability of $g(\cdot; \theta^0)^i$ for $i \le p$, we deduce the supremum of \eqref{eq:Chap_4_Step_2_eq1} in
$t$ over $[\gamma_n, b_n]$ converges to 0 as with $n$; hence so does
$\E [ ( \int_0^{nx} g(t + nx - s; \theta^0) \rmd N^n_s )^p ]$. By Poisson embedding and Lemma A.2 of
\cite{clinet2017statistical} again,
$ \E [ ( \int_0^{t-} g(t-s; \theta^0) \vert \rmd N^{x,n}_s - \rmd N^{x,n,c}_s\vert )^p ]$ is bounded by
\begin{align}
  K_p \Big\{ {} & \E \Big[ \int_0^{t-} g(t-s; \theta^0)^p \big\vert \lambda^{x,n}_{s, \theta^0} - \lambda^{x,n,c}_{s, \theta^0} \big\vert \Big] + \E \Big[ \Big( \int_0^{t-} g(t-s; \theta^0)^2 \big\vert \lambda^{x,n}_{s, \theta^0} - \lambda^{x,n,c}_{s, \theta^0} \big\vert \rmd s \Big)^{p/2}\Big] \Big\} \nonumber \\
		& + (1 + \epsilon^{-1})^{p-1} \E \Big[\Big( \int_0^{t-} g(t-s; \theta^0) \big\vert \lambda^{x,n}_{s, \theta^0} - \lambda^{x,n,c}_{s, \theta^0} \big\vert \rmd s \Big)^p\Big]. \label{eq:Chap_4_Step_2_eq2}
\end{align}
By H\"{o}lder's inequality again, we have
\begin{align*}
  & \E \Big[ \Big( \int_0^{t-} g(t - s; \theta^0)^i  \big\vert \lambda^{x,n}_{s, \theta^0} - \lambda^{x,n,c}_{s, \theta^0} \big\vert  \rmd s \Big)^k \Big] \\
  \le {} & \Big( \int_0^{t-} g(t - s; \theta^0)^i  \rmd s \Big)^{k-1}  \E \Big[ \Big( \int_0^{t-} g(t - s; \theta^0)^i  \big\vert \lambda^{x, n}_{s, \theta^0} - \lambda^{x,n,c}_{s, \theta^0} \big\vert^k \rmd s \Big) \Big].
\end{align*}
We can deduce from this inequality that, when taking the supremum in $t$ over $[\gamma_n, b_n]$, the first line of
\eqref{eq:Chap_4_Step_2_eq2} converges to 0 with $n$. To see this, note that there exists $\tilde{\gamma}_n \to\infty$
with $\tilde{\gamma}_n = o(\gamma_n)$ such that we can write
\begin{align}
  & \E \Big[  \int_0^{t-} g(t - s; \theta^0)^i  \big\vert \lambda^{x, n}_{s, \theta^0} - \lambda^{x}_{s, \theta^0} \big\vert^k \rmd s  \Big] \nonumber \\
  = {} & \int_0^{\tilde{\gamma}_n} g(t-s; \theta^0)^i \E \big[\big\vert \lambda^{x,n}_{s, \theta^0} - \lambda^{x,n,c}_{s, \theta^0} \big\vert^k \big] \rmd s + \int_{\tilde{\gamma}_n}^{t-} g(t-s; \theta^0)^i \E \big[\big\vert \lambda^{x,n}_{s, \theta^0} - \lambda^{x,n,c}_{s, \theta^0} \big\vert^k \big] \rmd s \nonumber \\
  \le {} & K_j  \int_0^{\tilde{\gamma}_n} g(t-s; \theta^0)^i \rmd s +   \sup_{t \in [\tilde{\gamma}_n, b_n]} \E \big[\big\vert \lambda^{x,n}_{t, \theta^0} - \lambda^{x,n,c}_{t, \theta^0} \big\vert^k \big]  \int_{\tilde{\gamma}_n}^{t-} g(t-s; \theta^0)^i \rmd s. \label{eq:Chap_4_Step_2_eq4}
\end{align}
By the induction hypothesis, we have
$\sup_{t \in [\tilde{\gamma}_n, b_n]} \E[\vert \lambda^{x,n}_{t, \theta^0} - \lambda^{x,n,c}_{t, \theta^0}\vert^k] =
o(1)$ for $k < p$. Thus, provided that $i \le p$ and $k < p$, along with the integrability of $g(\cdot; \theta^0)^i$,
the right hand side of \eqref{eq:Chap_4_Step_2_eq4} converges to $0$ with $n$. This shows the first line of
\eqref{eq:Chap_4_Step_2_eq2} converges to 0. Next, let
$f(t; \theta) = \E[ \vert \lambda^{x,n}_{t, \theta} - \lambda^{x,n,c}_{t, \theta} \vert^p]$. Thus far we have shown that
for any $\tilde{\epsilon}>0$, there exists $N$ large enough such that for all $n > N$,
\begin{align}
  f(t ;\theta^0)  \le {} & \tilde{\epsilon} + (1 + \epsilon^{-1})^{2(p-1)} \big(g (\cdot; \theta^0) \ast f(\cdot ;\theta^0) \big) (t) \label{eq:Chap_4_convol_eq1} \nonumber \\
  \le {} & \tilde{\epsilon}\Big( 1 + \sum_{i=1}^k (1 + \epsilon^{-1})^{2i(p-1)}G^{\ast (i)}(t; \theta^0) \Big) \nonumber\\
  {} & + (1 + \epsilon^{-1})^{2(k+1)(p-1)} \big( g^{\ast (k+1)} (\cdot; \theta^0) \ast f(\cdot; \theta^0) \big) (t) \nonumber \\
  \le {} & \tilde{\epsilon}\Big( 1 + \sum_{i=1}^k (1 + \epsilon^{-1})^{2i(p-1)}G^{\ast (i)}(t; \theta^0) \Big) + K_p \big( ( 1 + \epsilon^{-1})^{2(p-1)}\bar{\eta}\big)^{k+1}. \nonumber
\end{align}
Since $\bar{\eta} <1$, let $\epsilon$ be large enough such that $(1 + \epsilon^{-1})^{2(p-1)} \bar{\eta} < 1$, we have
\begin{align*}
  f(t; \theta^0) \le \frac{\tilde{\epsilon}}{1 - (1 + \epsilon^{-1})^{2(p-1)} \bar{\eta}} + K_p \big( ( 1 + \epsilon^{-1})^{2(p-1)} \bar{\eta}\big)^{k+1} 
\end{align*}
Hence there exists $K \in \N$ large enough such that for all $k > K$, $f(t; \theta^0) = o(1)$.
	
It remains to generalise the result for any $\theta \in \Theta$ and $j = \{0, 1,2\}$. Note that
$\E[\vert \partial_\theta^{\otimes j} \lambda^{x,n}_{t, \theta} - \partial_\theta^{\otimes j} \lambda^{x,n,c}_{t,
  \theta} \vert^p]$ is bounded by
\begin{align*}
  K_p \Big\{  & \big\vert \partial_\theta^{\otimes j} \nu(x + \frac{t}{n};\theta) - \partial_\theta^{\otimes j} \nu(x; \theta) \big\vert + \E \Big[ \Big( \int_0^{nx-} \big\vert \partial_\theta^{\otimes j} g(t + nx - s; \theta) \big\vert \rmd N^n_s \Big)^p \Big] \\
              & + \E \Big[ \Big( \int_0^{t-} \big\vert \partial_\theta^{\otimes j} g(t-s; \theta) \big\vert \big\vert \rmd N^{x,n}_s - \rmd N^{x,n,c}_s\big\vert \Big)^p \Big] \Big\}
\end{align*}
which, by following the arguments presented above, can be made arbitrarily small by letting $n$ be large enough.

\section{Technical lemmas and proofs of main results}\label{sec:ErgodicTheory-prep_Proofs}
\subsection*{Proof of Lemma~\ref{lm:ergodicity-lm1}}
Let us show
\begin{equation}
  \bigg\vert \frac{1}{b_n}\int_0^{b_n} \psi\big (Y^{x,n}_{t,\theta, \theta^\ast}  \big) - \psi\big(Y^{x,n,c}_{t,\theta, \theta^\ast}  \big)\rmd t \bigg\vert \overset{\mathbb{P}}{\rightarrow}0 \label{eq:asym_ergodicity_lm_1_eq1}
\end{equation}
first, then generalise the convergence to be uniform in $x$. Note that there exists a monotonically increasing sequence $\gamma_n >0$ such that $\gamma_n = o(b_n)$. By Lemma~\ref{lm:prep-proofs-lm1}, we can replace
the lower limit of the above integral by $\gamma_n$ such that it suffices to show $\frac{1}{b_n} \int_{\gamma_n}^{b_n} \psi( Y^{x,n}_{t, \theta, \theta^\ast} ) - \psi(Y^{x,n,c}_{t, \theta,\theta^\ast} )
\rmd t$ converges to 0 in, say, the $\mathbb{L}_1$-norm instead.  Furthermore, by the mean value theorem, Fubini's theorem and H\"{o}lder's inequality, there exists a random variable
$c^{x,n}_{t, \theta, \theta^\ast} \in E_1$ that lies on the line segment joining $Y^{x,n}_{t, \theta,\theta^\ast}$ and $Y^{x,n,c}_{t, \theta, \theta^\ast}$ such that $ \Vert \frac{1}{b_n}\int_{\gamma_n}^{b_n}
\psi (Y^{x,n}_{t,\theta, \theta^\ast} ) - \psi(Y^{x,n,c}_{t,\theta, \theta^\ast} ) \rmd t \Vert $ is bounded by
\begin{equation*}
  \big\Vert Y^{x,n}_{t, \theta,\theta^\ast} - Y^{x,n,c}_{t, \theta, \theta^\ast} \big\Vert_p \rmd t.
\end{equation*} Since $\vert \nabla \psi \vert \in C_{p-1}(E_1, \R)$, by Lemma~\ref{lm:prep-proofs-lm1} and \ref{lm:prep-proofs-lm2}, there exists an $\alpha \in
[0,1]$ such that
\begin{equation*} \big\Vert \nabla \psi \big(c^{x,n}_{t, \theta, \theta^\ast} \big) \big\Vert_{\frac{p}{p-1}} \le K_p \sup_{t \in [0, b_n]} \big \Vert \alpha Y^{x,n}_{t, \theta, \theta^\ast} + (1 - \alpha)
  Y^{x,n,c}_{t, \theta, \theta^\ast} \big\Vert_p^{p-1} < \infty
\end{equation*} and $\sup_{t \in [\gamma_n, b_n]} \Vert Y^{x,n}_{t, \theta, \theta^\ast} - Y^{x,n,c}_{t, \theta, \theta^\ast} \Vert_p = o (1)$. This shows \eqref{eq:asym_ergodicity_lm_1_eq1}.
	
To generalise the convergence to be uniform in $x$, it suffices to show
$$\sup_{t \in [\gamma_n, b_n]} \sup_{x \in [0,1]} \big \vert
Y^{x,n}_{t,\theta} - Y^{x,n,c}_{t,\theta} \big\vert \converginP 0$$ as $n \to \infty$. Let us look at showing
$\sup_{t \in [\gamma_n, b_n]} \sup_{x \in [0,1]} \vert \lambda^{x,n}_{t,\theta} - \lambda^{x,n,c}_{t,\theta} \vert
\converginP 0$ first. Note that $N^{x,n}$ can be written as the superposition of two point processes $N^{x,n,0}$ and
$N^{x,n,1}$, where $N^{x,n,0}$ counts only the offspring events of $N^n$ that occurred after $nx$ which are triggered by
events of $N^n$ that occurred prior to $nx$, and $N^{x,n,1}$ is a Hawkes process with zero history, time-varying
baseline intensity $\nu(x + \frac{\cdot}{n};\theta^\ast)$ and excitation kernel $g(\cdot; \theta^\ast)$. $N^{x,n,0}_t$
and $N^{x,n,1}_t$ have intensities $\lambda^{x,n,0}_{t,\theta^\ast}$ and $\lambda^{x,n,1}_{t, \theta^\ast}$,
respectively, where for any $\theta$, we have
\begin{align*}
  \lambda^{x,n,0}_{t,\theta} = {} & \int_0^{nx-} g(nx + t - s;\theta) \rmd N^n_s + \int_0^{t-} g(t-s;\theta) \rmd N^{x,n,0}_s \\
  \lambda^{x,n,1}_{t, \theta} = {} & \nu(x + \frac{t}{n}; \theta) + \int_0^{t-} g(t-s; \theta) \rmd N^{x,n,1}_s.
\end{align*}
Note that $\lambda^{x,n,0}_{t, \theta}$ evaluates the excitation effects generated by all the events of $N^{x,n}$ prior
to $nx$ and all of their subsequent offspring events at $t$ post $nx$. Hence $N^{x,n,0}$ contains only offspring events
and does not generate any immigrant events on its own. As the branching ratio is bounded by $\bar{\eta} < 1$ under
\ref{cond:C1}, we have $\lim_{n \to \infty} \sup_{x \in [0,1]} N^{x,n,0}_{b_n} \le K < \infty$ almost surely. From this,
we deduce that $\sup_{t \in [\gamma_n, b_n]} \sup_{x \in [0,1]} \lambda^{x,n,0}_{t, \theta} \to 0$ almost surely. Next,
we define two additional auxiliary point processes. Let $\dot{N}^{\bar{\nu}}$ be a stationary Hawkes process with
baseline intensity $\bar{\nu}$ and excitation kernel $g(\cdot; \theta^\ast)$. $\dot{N}^{\bar{\nu}}_t$ has intensity
$\dot{\lambda}^{\bar{\nu}}_{t, \theta^\ast}$ where for any $\theta$, we have
\begin{equation*} \dot{\lambda}^{\bar{\nu}}_{t, \theta} = \bar{\nu} + \int_{-\infty}^{t-} g(t-s;\theta) \rmd
  \dot{N}^{\bar{\nu}}_s.
\end{equation*}
Next, let $\bar{\nu}^n(x; \theta) = \sup_{x' \in [x, x+\frac{b_n}{n}]} \nu(x'; \theta)$ and denote $\tilde{N}^{x,n,c}$
to be a Hawkes process with constant baseline intensity $\bar{\nu}^n(x; \theta^\ast)$, excitation kernel
$g(\cdot; \theta^\ast)$ and pre-excitation effects generated by events of $\dot{N}^{\bar{\nu}}$ prior to $nx$. That is,
$\tilde{N}^{x,n,c}$ counts the offspring events of $\dot{N}^{\bar{\nu}}$ after $nx$ which are triggered by events of
$\dot{N}^{\bar{\nu}}$ that occurred prior to $nx$, as well as the events generated by a Hawkes process with zero
history, constant baseline intensity $\bar{\nu}^n(x; \theta^\ast)$ and excitation kernel $g(\cdot;
\theta^\ast)$. $\tilde{N}^{x,n,c}_t$ is thus the superposition of two point processes $\tilde{N}^{x,n,c,0}_t$ and
$\tilde{N}^{x,n,c,1}_t$, and have intensities $\tilde{\lambda}^{x,n,c,0}_{t, \theta^\ast}$ and
$\tilde{\lambda}^{x,n,c,1}_{t, \theta^\ast}$, respectively, where for any $\theta$, we have
	\begin{align*}
          \tilde{\lambda}^{x,n,c,0}_{t, \theta} = {} & \int_{-\infty}^{nx-} g(nx + t - s; \theta) \rmd \dot{N}^{\bar{\nu}}_s + \int_0^{t-} g(t-s; \theta)\rmd \tilde{N}^{x,n,c,0}_s, \\
          \tilde{\lambda}^{x,n,c,1}_{t, \theta} = {} & \bar{\nu}^n(x; \theta) +\int_0^{t-} g(t-s;\theta) \rmd \tilde{N}^{x,n,c,1}_s.
	\end{align*}
        Under this construction, $\lambda^{x,n}_{\cdot, \theta}$ and $\lambda^{x,n,c}_{\cdot, \theta}$ are dominated by
        $\tilde{\lambda}^{x,n,c}_{\cdot, \theta}$, $\lambda^{x,n,0}_{\cdot, \theta}$ by $\tilde{\lambda}^{x,n,c,0}_{\cdot, \theta}$, and
        $\lambda^{x,n,1}_{\cdot, \theta} $ by $\tilde{\lambda}^{x,n,c,1}_{\cdot, \theta}$ almost surely. We have
\begin{align}
  0 \le {} & \tilde{\lambda}^{x,n,c}_{t, \theta} - \lambda^{x,n}_{t, \theta} \nonumber \\
  = {} & \bar{\nu}^n(x; \theta) - \nu(x + \frac{t}{n}; \theta) + \tilde{\lambda}^{x,n,c,0}_{t, \theta} - \lambda^{x,n,0}_{t, \theta} + \int_0^{t-}
         g(t-s; \theta) \big( \rmd \tilde{N}^{x,n,c,1}_s - \rmd N^{x,n,1}_s\big). \label{eq:unifconvergence_eq1}
\end{align} Noting that $t \in [\gamma_n, b_n]$ and $\nu(\cdot; \theta)$ is Lipschitz equicontinuous, we have $\bar{\nu}^n(x; \theta) - \nu(x + \frac{t}{n}; \theta) \le K \frac{b_n}{n} \to 0$
with $n$. Furthermore, from the above discussion, we deduce that $\sup_{t \in [\gamma_n, b_n]} \sup_{x \in [0,1]} \tilde{\lambda}^{x,n,c,0}_{t, \theta}$ and $\sup_{t \in [\gamma_n, b_n]} \sup_{x \in
  [0,1]} \lambda^{x,n,0}_{t, \theta}$ both tend to $0$ almost surely with $n$. Finally, by Poisson embedding, $\tilde{N}^{x,n,c,1} - N^{x,n,1}$ is dominated by another Hawkes process with with zero
history, baseline intensity $K\frac{b_n}{n}$ (which is independent from $x$) and excitation kernel $g(\cdot; \theta^\ast)$. Since $\frac{b_n}{n} \to 0$, the last term of \eqref{eq:unifconvergence_eq1}
tends to $0$ uniformly in $x$, almost surely.
	
Thus far, we have shown that $\sup_{t \in [\gamma_n, b_n]} \sup_{x \in [0,1]} \vert \tilde{\lambda}^{x,n,c}_{t, \theta} - \lambda^{x,n}_{t,\theta} \vert \to 0$ almost
surely. Hence for $\sup_{t \in [\gamma_n, b_n]} \sup_{x \in [0,1]} \vert \lambda^{x,n}_{t,\theta} - \lambda^{x,n,c}_{t,\theta} \vert \converginP 0$, it suffices that
$$\sup_{t \in [\gamma_n, b_n]} \sup_{x \in [0,1]} \vert \tilde{\lambda}^{x,n,c}_{t, \theta} - \lambda^{x,n,c}_{t,\theta} \vert \to 0$$ 
almost surely as well. Note again that $\lambda^{x,n,c}_{t, \theta}$ is dominated by $\tilde{\lambda}^{x,n,c}_{t, \theta}$, we write
\begin{align}
  0 \le {} & \tilde{\lambda}^{x,n,c}_{t, \theta} - \lambda^{x,n,c}_{t, \theta} \nonumber \\
  = {} & \bar{\nu}^n(x; \theta) - \nu(x; \theta) + \tilde{\lambda}^{x,n,c,0}_{t, \theta} + \int_0^{t-} g(t-s; \theta) \big( \rmd \tilde{N}^{x,n,c,1}_s - \rmd
         N^{x,n,c}_s\big). \label{eq:unifconvergence_eq2}
\end{align}
By similar arguments as before, we can deduce that \eqref{eq:unifconvergence_eq2} converges to 0 uniformly in $x \in [0,1]$, $t \in [\gamma_n, b_n]$ almost surely. This
proves $\sup_{t \in [\gamma_n, b_n]} \sup_{x \in [0,1]} \vert \lambda^{x,n}_{t,\theta} - \lambda^{x,n,c}_{t,\theta} \vert \converginP 0$. From the Lipschitz
equicontinuity of $\partial_\theta^{\otimes j}\nu(\cdot; \theta)$ and integrability of $\vert \partial_\theta^{\otimes j} g(\cdot; \theta) \vert$ for $j \in \{1, 2\}$, it
is straightforward to see that similar arguments lead to the uniform estimate
\begin{equation*}
  \sup_{t \in [\gamma_n, b_n]} \sup_{x \in [0,1]} \big\vert Y^{x,n}_{t, \theta, \theta^\ast} - Y^{x,n,c}_{t, \theta, \theta^\ast} \big\vert \to 0 \quad \mbox{almost surely}.
\end{equation*}
\subsection*{Proof of Lemma~\ref{lm:ergodicity-lm2}}
This is a straightforward adaptation of the proof of Lemma~\ref{lm:ergodicity-lm1} and the details are omitted here.
\subsection*{Proof of Lemma~\ref{lm:ergodicity-lm3}}
Let us consider the stationary Hawkes process $\dot{N}^x$ on $\mathbb{R}$ with constant baseline intensity
$\nu(x; \theta^\ast)$ and excitation kernel $g(\cdot; \theta^\ast)$. Denote the natural filtration of $\dot{N}^x$ by
$\dot{\mathcal{F}}^x =(\dot{\mathcal{F}}^x_t)_{t \in \R}$ with
$\dot{\mathcal{F}}^x_t = \sigma \{ \dot{N}^x_s: -\infty < s \le t \}$ and the intensity of $\dot{N}^x_t$ by
$\dot{\lambda}^x_{t, \theta^\ast}$ where for any $\theta$, we have
\begin{equation*}
  \dot{\lambda}^x_{t, \theta} + \nu(x; \theta) = \int_{-\infty}^{t-} g(t-s; \theta) \rmd \dot{N}^x_s. 
\end{equation*}
Let
$\dot{Y}^x_{t, \theta, \theta^\ast} = (\dot{\lambda}^x_{t, \theta}, \dot{\lambda}^x_{t, \theta^\ast}, \partial_\theta \dot{\lambda}^x_{t, \theta},
\partial_\theta^{\otimes 2} \dot{\lambda}^x_{t, \theta})$ take values on $E_1$. Instead of showing \eqref{eq:asympergodicityeq4}, it is equivalent to show
\begin{equation}
  \sup_{x \in [0,1]}  \bigg\vert \frac{1}{T}\int_{0}^{T} 	\psi\big( \dot{Y}^x_{t,\theta, \theta^\ast} \big) \rmd t - \dot{\pi}\big(x, \theta, \theta^\ast ;\psi \big) \bigg\vert\overset{\mathbb{P}}{\rightarrow}0. \label{eq:uniform_ergodicity_eq1}
\end{equation}
By Theorem 1 and Proposition 1 of \cite{kwan2024ergodic}, $\dot{Y}^x_{t, \theta, \theta^\ast}$ is $C_p(E_1, \R)$-ergodic and
$\dot{\pi}(x, \theta, \theta^\ast; \psi) = \E [\psi(\dot{Y}^x_{t, \theta, \theta^\ast})]$. It thus remains to show the convergence is uniform in $x$. Since
$\psi$ is continuous on $E_1$ and $[0,1]$ is compact, it suffices to show that $\frac{1}{T}\int_0^T \psi (\dot{Y}^x_{t, \theta, \theta^\ast}) \rmd t$ is
equicontinuous in, say, the $\mathbb{L}_1$-norm. By Fubini's theorem, H\"{o}lder's inequality and the mean value theorem, there exists some random variable
$\dot{c}^{x,x'}_{t, \theta, \theta^\ast}$ that lies on the line segment joining $\dot{Y}^x_{t, \theta, \theta^\ast}$ and $\dot{Y}^{x'}_{t, \theta, \theta^\ast}$
such that we have
\begin{align*}
  {} & \Big\Vert \frac{1}{T} \int_0^T \psi(\dot{Y}^x_{t, \theta, \theta^\ast} ) - \psi(\dot{Y}^{x'}_{t, \theta, \theta^\ast}) \rmd t \Big\Vert\\
  \le {} & \frac{1}{T} \int_0^T \big\Vert \nabla \psi (\dot{c}^{x,x'}_{t, \theta, \theta^\ast}) \big\Vert_{\frac{p}{p-1}} \big\Vert \dot{Y}^x_{t, \theta, \theta^\ast} - \dot{Y}^{x'}_{t, \theta, \theta^\ast} \big\Vert_p \rmd t \\
  \le{} & \sup_{t \in \R} \big\Vert \nabla \psi (\dot{c}^{x,x'}_{t, \theta, \theta^\ast}) \big\Vert_{\frac{p}{p-1}} \sup_{t \in \R} \Vert \dot{Y}^x_{t, \theta, \theta^\ast} - \dot{Y}^{x'}_{t, \theta, \theta^\ast} \big\Vert_p.
\end{align*}
Under the assumed regularity conditions, a straightforward adaptation of the proofs of Lemmas~\ref{lm:prep-proofs-lm1} and \ref{lm:prep-proofs-lm2} shows that
$\sup_{t \in \R} \Vert \nabla \dot{c}^{x, x'}_{t, \theta, \theta^\ast} \Vert_{\frac{p}{p-1}}$ is bounded and
$\sup_{t \in \R} \Vert \dot{Y}^x_{t, \theta, \theta^\ast} - \dot{Y}^{x'}_{t, \theta, \theta^\ast} \Vert_p \le K_p \vert x - x' \vert$. Thus for any
$\tilde{\epsilon} >0$, by taking $x$ sufficiently close to $x'$,
\begin{equation*}
  \Big\Vert \frac{1}{T} \int_0^T \psi(\dot{Y}^x_{t, \theta, \theta^\ast} ) - \psi(\dot{Y}^{x'}_{t, \theta, \theta^\ast}) \rmd t \Big\Vert < \tilde{\epsilon},
\end{equation*}
the bound being independent of $T$. Such equicontinuity along with pointwise convergence ensures uniform convergence.
\subsection*{Proof of Lemma~\ref{lm:ergodicity-lm4}}
Since $\dot{\pi}(x, \theta, \theta^\ast; \psi) = \E [ \psi(\dot{Y}^x_{t, \theta, \theta^\ast})]$, thus by the mean value theorem and H\"{o}lder's inequality,
there exists $\dot{c}^{x, x'}_{t, \theta, \theta^\ast}$ that lies on the line segment joining $\dot{Y}^{x'}_{t, \theta, \theta^\ast}$ and
$\dot{Y}^x_{t, \theta, \theta^\ast}$ such that $\vert \dot{\pi}(x, \theta, \theta^\ast; \psi) - \dot{\pi}(x', \theta, \theta^\ast; \psi) \vert$ is bounded by
\begin{equation*}
  \big\Vert \nabla \psi (\dot{c}^{x, x'}_{t, \theta,  \theta^\ast} )\big\Vert_{\frac{p}{p-1}} \big\Vert \dot{Y}^x_{t, \theta, \theta^\ast} - \dot{Y}^{x'}_{t,
    \theta, \theta^\ast} \big\Vert_p.
\end{equation*}
From the proof of Lemma~\ref{lm:ergodicity-lm3}, it is straightforward to see that the above expression is bounded by $K_p\vert x - x'\vert$.

\subsection*{Proof of Proposition~\Ref{prop:uniform-convergence}}
  Since $\Theta$ is compact, thus for any $\delta >0$, there exist a finite number of open balls with radius $\delta$
  whose union covers $\Theta$. Let $\tilde{\theta}_1, \ldots, \tilde{\theta}_K$ be the centre of these balls and for any
  $\theta \in \Theta$, let $\tilde{\theta}_{i(\theta)}$ be the centre of the ball that contains $\theta$. For any
  $\tilde{\epsilon}>0$, we have the following bound for
  $\PP(\sup_{\theta \in \Theta} \vert \frac{1}{n} \psi(Y^n_{t, \theta, \theta^0}) \rmd t - \pi(\theta, \theta^0;
  \psi)\vert > \tilde{\epsilon})$:
  \begin{align}
    &\mathbb{P}\bigg( \sup_{\theta \in \Theta } \Big\vert \frac{1}{n} \int_{0}^n \psi\big(Y^n_{t,\theta,\theta^0}\big) - \psi\big(Y^n_{t,\tilde{\theta}_{i(\theta)}, \theta^0}\big) \rmd t \Big\vert > \frac{\tilde{\epsilon}}{3}\bigg) \nonumber\\
    +\ &\mathbb{P}\bigg( \sup_{\theta \in \Theta} \Big\vert \frac{1}{n} \int_{0}^n \psi\big(Y^n_{t,\tilde{\theta}_{i(\theta)}, \theta^0}\big)\rmd t - \pi \big( \tilde{\theta}_{i(\theta)}, \theta^0; \psi \big) \Big\vert > \frac{\tilde{\epsilon}}{3}\bigg) \nonumber\\
    +\ &\mathbb{P}\bigg( \sup_{\theta \in \Theta} \big\vert \pi \big( \tilde{\theta}_{i(\theta)}, \theta^0; \psi \big) - \pi \big( \theta, \theta^0; \psi \big)  \big\vert > \frac{\tilde{\epsilon}}{3}\bigg). \label{eq:approxprocedureeq8}
  \end{align}
  As $\Theta$ is compact, by the continuity conditions imposed on $\nu$ and $g$ under
  \hyperref[cond:C4]{[C4]($p$)}, for any $\tilde{\epsilon} >0$, there exists a $\delta >0$ such that for all
  $\theta, \theta' \in \Theta$, $\vert \theta - \theta' \vert < \delta$ implies that
  $\vert \frac{1}{n} \int_0^n \psi(Y^n_{t, \theta, \theta^0}) - \psi (Y^n_{t, \theta', \theta^0} ) \rmd t \vert \le
  \frac{\tilde{\epsilon}}{3}$ with probability tending to 1. Hence the first term of \eqref{eq:approxprocedureeq8}
  converges to 0. The second term of \eqref{eq:approxprocedureeq8} is bounded by
  \begin{equation*}
    \sum_{i = 1}^K \PP \bigg(\Big\vert \frac{1}{n} \int_{0}^{n}\psi \big(Y^n_{t, \tilde{\theta}_{i}, \theta^0}\big) \rmd t -  \pi \big(\tilde{\theta}_{i}, \theta^0; \psi \big)  \Big\vert > \frac{\tilde{\epsilon}}{3}\bigg)
  \end{equation*}
  which also converges to 0 by Theorem~\ref{thm:asympergodicity}. Next, we turn to showing
  $\pi(\theta, \theta^0; \psi)$ is continuous in $\theta$, which in turn would imply the last term of
  \eqref{eq:approxprocedureeq8} equals 0. Recall that
  $\pi(\theta, \theta^0; \psi) = \int_0^1 \dot{\pi}(x, \theta, \theta^0; \psi) \rmd x$ and by Lemma~\ref{lm:ergodicity-lm4},
  $\{\dot{\pi}(x, \theta, \theta^0; \psi)\}_{\theta \in \Theta, \theta^0 \in \Theta, \psi \in D_p(E_1, \R)}$ is
  equicontinuous in $x$. Hence it suffices to show
  $\{\dot{\pi}(x, \theta, \theta^0; \psi)\}_{x \in [0,1], \theta^0 \in \Theta, \psi \in D_p(E_1, \R)}$ is equicontinuous
  in $\theta$. Following the proof of Lemma~\ref{lm:ergodicity-lm4}, by the mean value theorem and
  H\"{o}lder's inequality, $\vert \dot{\pi}(x, \theta, \theta^0; \psi) - \dot{\pi}(x, \theta', \theta^0; \psi)\vert$ is
  bounded by
  \begin{equation*}
    \big\Vert \nabla \psi(\dot{c}^x_{t, \theta, \theta', \theta^0}) \big\Vert_{\frac{p}{p-1}} \big\Vert \dot{Y}^x_{t, \theta, \theta^0} - \dot{Y}^x_{t, \theta', \theta^0}\big\Vert_p
  \end{equation*}
  where $\dot{c}^x_{t, \theta, \theta', \theta^0}$ lies on the line segment joining $\dot{Y}^x_{t, \theta, \theta^0}$
  and $\dot{Y}^x_{t, \theta', \theta^0}$. By Lemma~\ref{lm:prep-proofs-lm1},
  $\Vert \nabla \psi(\dot{c}^x_{t, \theta, \theta', \theta^0}) \Vert_{\frac{p}{p-1}}$ is bounded. So it remains to show
  $\{ \partial_\theta^{\otimes j} \dot{\lambda}^x_{t, \theta}\}_{x \in [0,1], t \in \R, j \in \{0, 1, 2\}}$ is
  equicontinuous in $\theta$ in $\mathbb{L}_p$-norm. Let us consider the case where $p = 1$ and $j = 0$ first. Let
  $\theta$, $\theta' \in \Theta$, $\E [ \vert \dot{\lambda}^x_{t, \theta} - \dot{\lambda}^x_{t, \theta'} \vert]$ is
  bounded by
  \begin{equation*}
    \big\vert \nu(x; \theta) - \nu(x; \theta') \big\vert +   \int_{-\infty}^{t-} \big\vert g(t-s; \theta) - g(t-s; \theta') \big\vert \E \big[ \dot{\lambda}^x_{s, \theta^0}\big] \rmd s.
  \end{equation*}
  Under \ref{cond:C4}, $\{\nu(x;\theta)\}_{x \in [0,1]}$ is equicontinuous in $\theta$. Hence it remains to
  show the second term above can be made arbitrarily small by letting $\theta'$ be sufficiently close to $\theta$. By
  Lemma~\ref{lm:prep-proofs-lm1} and the mean value theorem, there exists $\theta^\ast \in \Theta$ that
  lies on the line segment joining $\theta$ and $\theta'$ such that
  \begin{equation*}
    \int_0^{t-} \big\vert g(t-s; \theta) - g(t-s; \theta') \big\vert \E \big[ \lambda^n_{s, \theta^0}\big] \rmd s \le \big\vert \theta - \theta' \big\vert K \int_0^{t-} \big\vert \partial_{\theta^\ast} g(t-s; \theta^\ast) \big\vert \rmd s.
  \end{equation*}
  Since $\vert \partial_\theta g(\cdot; \theta) \vert$ is integrable for any $\theta \in \Theta$, it follows that the
  above bound can be made arbitrarily small by letting $\theta$ and $\theta'$ to be sufficiently close to each
  other. This proves the case when $p = 1$ and $j = 0$. The proof to generalise the result for $p >1$ and
  $j \in \{1, 2\}$ is a straightforward adaptation of the proof above, Lemmas~\ref{lm:prep-proofs-lm1}
  and \ref{lm:prep-proofs-lm2}, and a consequence of the integrability of
  $\vert \partial_\theta^{\otimes j} g(\cdot; \theta) \vert^p$, $j \in \{1, 2, 3\}$.

\section{Conditions on non-exponential $g$}\label{sec:Examples-of-g}
The partial derivatives of the generalised Pareto kernel with respect to $\alpha$ and $\beta$ up to order 2 are
\begin{align*}
	\partial_\alpha \tilde{g}_p(t; \alpha, \beta) = {} & \bigg( \frac{\big( \beta + \alpha t \big) \ln \big( 1 + \frac{\alpha t}{\beta} \big) - \alpha \big(\alpha + 1\big) t}{\alpha^2 \big(\beta + \alpha t \big)} \bigg) \tilde{g}_p(t; \alpha, \beta); \\
	\partial_\beta \tilde{g}_p(t; \alpha, \beta) = {} & \Big( \frac{t - \beta}{\beta \big(\beta + \alpha t \big)} \Big) \tilde{g}_p(t; \alpha, \beta); \\
	\partial_\alpha^2 \tilde{g}_p(t; \alpha, \beta) = {} & \Bigg( \frac{\Big( \frac{1}{\alpha} + 1 \Big)t^2 }{\big( \beta + \alpha t \big)^2 } - \frac{2 \ln\big( 1 + \frac{\alpha t}{\beta} \big)}{\alpha^3} + \frac{2t}{\alpha^2 \big(\beta + \alpha t \big) }  \Bigg) \tilde{g}_p(t; \alpha, \beta) \\
	{} & + \Bigg( \frac{\ln\big( 1 + \frac{\alpha t }{\beta}\big)}{\alpha^2} - \frac{\big( \frac{1}{\alpha} + 1\big)t}{\beta + \alpha t} \Bigg)^2 \tilde{g}_p(t; \alpha, \beta);\\
	\partial_\beta^2 \tilde{g}_p(t; \alpha,\beta) = {} & \Big( \frac{2 \beta^2 - (\alpha - 1) t^2 - 4 \beta t}{\beta^2 (\beta + \alpha t)^2} \Big)\tilde{g}_p(t; \alpha, \beta);\\
	\partial_\alpha \partial_\beta \tilde{g}_p(t; \alpha, \beta) = {} & \Bigg( \frac{\big( t - \beta \big) \Big( \big(\beta + \alpha t \Big) \ln \big( 1 + \frac{\alpha t}{\beta}  \big) - \alpha \big( 2 \alpha + 1 \big) t \Big)}{\alpha^2 \beta \big(\beta + \alpha \beta \big)^2} \Bigg) \tilde{g}_p(t; \alpha, \beta).
\end{align*}
Since $\tilde{g}_p(t; \alpha, \beta)$ decays at a polynomial rate of $1 + 1/\alpha$, it is straightforward to see from here that
$\tilde{g}_p(t; \alpha, \beta)$ satisfies \hyperref[cond:C4]{[C3](p)} for $\alpha < 1/2$ and $\beta > 0$. Following a similar approach, we duduce that
\hyperref[cond:C4]{[C4]($p$)-(ii)} is satisfied for $\alpha < \frac{1}{3}$ and $\beta >0$.

Denote $\Gamma'(\alpha)$ and $\Gamma''(\alpha)$ to be the first two derivatives of the Gamma function with respect to $\alpha$ respectively, the
partial derivatives of the Gamma density with respect to $\alpha$ and $\beta$, up to order 2, are
\begin{align*}
	\partial_\alpha \tilde{g}_g(t; \alpha, \beta) = {} & \Big( \ln(t) - \ln(\beta) - \frac{\Gamma'(\alpha) }{\Gamma(\alpha)} \Big) \tilde{g}_g(t; \alpha, \beta);\\
	\partial_\beta\tilde{g}_g(t; \alpha, \beta) = {} & \Big(\frac{t}{\beta^2} - \frac{\alpha}{\beta} \Big) \tilde{g}_g(t; \alpha, \beta);\\
	\partial_\alpha^2 \tilde{g}_g(t; \alpha, \beta) = {} & \Big( \Big( \ln(t) - \ln(\beta) - \frac{\Gamma'(\alpha) }{\Gamma(\alpha)} \Big)^2 -  \frac{\Gamma''(\alpha)}{\Gamma(\alpha)} + \Big( \frac{\Gamma'(\alpha) }{\Gamma(\alpha)} \Big)^2 \Big) \tilde{g}_g(t; \alpha, \beta);\\
	\partial_\beta^2 \tilde{g}_g(t; \alpha, \beta) = {} & \Big( \frac{\alpha}{\beta^2} - \frac{2t}{\beta^3} + \Big(\frac{t}{\beta^2} - \frac{\alpha}{\beta}\Big)^2 \Big) \tilde{g}_g(t; \alpha, \beta);\\
	\partial_\alpha \partial_\beta \tilde{g}_g(t; \alpha, \beta) = {} &\Big( \Big( \ln(t) - \ln(\beta) - \frac{\Gamma'(\alpha) }{\Gamma(\alpha)} \Big) \Big(\frac{t}{\beta^2} - \frac{\alpha}{\beta}\Big) - \frac{1}{\beta} \Big) \tilde{g}_g(t; \alpha, \beta).
\end{align*}
Also termed the digamma function, the first logarithmic derivative of the Gamma function is denoted by $\psi(\alpha) = \frac{\Gamma'(\alpha)}{\Gamma(\alpha)}$. By Theorem 1 of \cite{Guo_2014}, we have the following bound for $\psi(\alpha)$:
\begin{equation*}
	\ln(\alpha + \frac{1}{2}) - \frac{1}{\alpha} < \psi(\alpha) < \ln(\alpha + e^{-\gamma}) - \frac{1}{\alpha} ,
\end{equation*}
where $\gamma = 0.577 \cdots$ stands for the Euler-Mascheroni's constant, and by Lemma 1.2 of
\cite{batir2005inequalities}, we have
$\psi'(\alpha) = \frac{\Gamma''(\alpha)}{\Gamma(\alpha)} - \frac{\Gamma'(\alpha)^2}{\Gamma(\alpha)^2} <
e^{-\psi(\alpha)}$.  From here, we deduce that $\tilde{g}_g(t; \alpha, \beta)$ satisfies \ref{cond:C3} and
\hyperref[cond:C4]{[C4]($p$)-(ii)} for $\alpha > 1-\frac{1}{p}$ and $\beta >0$.

Finally, the first and second order partial derivatives of the Weibull kernel with respect to $\alpha$ and $\beta$ are 
\begin{align*}
	\partial_\alpha \tilde{g}_w(t; \alpha, \beta) = {} & \Big( \frac{1}{\alpha} + \ln \Big( \frac{t}{\beta}\Big) \Big( 1 - \frac{t^\alpha}{\beta^\alpha} \Big) \Big) \tilde{g}_w(t; \alpha, \beta); \\
	\partial_\beta \tilde{g}_w(t; \alpha, \beta) = {} & \Big( \frac{\alpha t^\alpha}{\beta^{\alpha + 1}} - \frac{\alpha}{\beta} \Big) \tilde{g}_w(t; \alpha, \beta); \\
	\partial_\alpha^2 \tilde{g}_w(t; \alpha, \beta) = {} & \Big( \Big( \frac{1}{\alpha} + \ln \Big( \frac{t}{\beta}\Big) \Big( 1 - \frac{t^\alpha}{\beta^\alpha} \Big) \Big)^2 - \frac{1}{\alpha^2} - \frac{t^\alpha}{\beta^\alpha} \ln \Big(\frac{t}{\beta}\Big)^2 \Big) \tilde{g}_w(t; \alpha, \beta);\\
	\partial_\beta^2 \tilde{g}_w(t; \alpha, \beta) = {} & \Big( \frac{\alpha}{\beta^2} - \frac{\alpha (\alpha + 1) t^\alpha}{\beta^{\alpha + 2}}  + \Big( \frac{\alpha t^\alpha}{\beta^{\alpha + 1}} - \frac{\alpha}{\beta}\Big)^2 \Big) \tilde{g}_w(t; \alpha, \beta); \\
	\partial_\alpha \partial_\beta \tilde{g}_w(t; \alpha, \beta) = {} & \Big( \frac{t^\alpha }{\beta^{\alpha + 1}} \Big( 1 + \alpha \ln \Big(\frac{t}{\beta}\Big) \Big) - \frac{1}{\beta} \Big) \tilde{g}_w(t; \alpha, \beta)\\
	& + \frac{\alpha}{\beta} \Big(\frac{ t^\alpha}{\beta^\alpha} - 1  \Big) \Big( \frac{1}{\alpha} + \ln\Big( \frac{t}{\beta}\Big) \Big( 1 - \frac{t^\alpha}{\beta^\alpha} \Big)\Big) \tilde{g}_w(t; \alpha, \beta).
\end{align*}
Similar to the Gamma kernel, a quick calculation shows that $\tilde{g}_w(t; \alpha, \beta)$ satisfies \ref{cond:C3} and \hyperref[cond:C4]{[C4]($p$)-(ii)} for $\alpha > 1-\frac{1}{p}$ and $\beta >0$.

It remains to show that \hyperref[cond:C5]{[C5]-(iii)} holds. Note that for all three kernels considered in this
subsection, there exist $t$, $t' \in \R_+$ such that $\partial_{\alpha} \tilde{g}(t; \alpha, \beta) \ne 0$ and
$\partial_\beta \tilde{g}(t'; \alpha, \beta) \ne 0$. From this, we deduce that $\tilde{g}(t; \tilde{\theta}_g) =
\tilde{g}(t; \tilde{\theta}_g')$ for all $t \in\R_+$ only when $\tilde{\theta}_g = \tilde{\theta}_g'$. Next, note again
that for all three candidates of $g$, there exists a $t \in \R_+$ such that $\partial_{\theta_{g, i}} g(t; \theta_g) \ne
0$ for all $i \in \{1, \ldots, d_g\}$. This implies the latter of \hyperref[cond:C5]{[C5]-(iii)} holds.

\end{appendix}

\bibliographystyle{abbrv}
\bibliography{mybibfile}

\begin{thebibliography}{10}

\bibitem{abergel2015Long}
F.~Abergel and A.~Jedidi.
\newblock Long-time behavior of a {H}awkes process--based limit order book.
\newblock {\em SIAM Journal on Financial Mathematics}, 6(1):1026--1043, 2015.

\bibitem{andersen2012statistical}
P.~K. Andersen, O.~Borgan, R.~D. Gill, and N.~Keiding.
\newblock {\em Statistical models based on counting processes}.
\newblock Springer Science \& Business Media, 1993.

\bibitem{bacry2015hawkes}
E.~Bacry, I.~Mastromatteo, and J.-F. Muzy.
\newblock Hawkes processes in finance.
\newblock {\em Market Microstructure and Liquidity}, 01(01):1550005, 2015.

\bibitem{batir2005inequalities}
N.~Batir.
\newblock Some new inequalities for gamma and polygamma functions.
\newblock {\em J. Inequal. Pure Appl. Math}, 6(4):1--9, 2005.

\bibitem{chen2013inference}
F.~Chen and P.~Hall.
\newblock {Inference for a nonstationary self-exciting point process with an
  application in ultra-high frequency financial data modeling}.
\newblock {\em {Journal of Applied Probability}}, 50(4):1006--1024, 2013.

\bibitem{clinet2018statistical}
S.~Clinet and Y.~Potiron.
\newblock Statistical inference for the doubly stochastic self-exciting
  process.
\newblock {\em Bernoulli}, 24(4B):3469--3493, 2018.

\bibitem{clinet2017statistical}
S.~Clinet and N.~Yoshida.
\newblock Statistical inference for ergodic point processes and application to
  limit order book.
\newblock {\em Stochastic Processes and their Applications}, 127(6):1800--1839,
  2017.

\bibitem{engle1998autoregressive}
R.~F. Engle and J.~R. Russell.
\newblock Autoregressive conditional duration: a new model for irregularly
  spaced transaction data.
\newblock {\em Econometrica}, pages 1127--1162, 1998.

\bibitem{escobar2020hawkes}
J.~V. Escobar.
\newblock A {H}awkes process model for the propagation of covid-19: Simple
  analytical results.
\newblock {\em EPL (Europhysics Letters)}, 131(6):68005, 2020.

\bibitem{fleming2011counting}
T.~R. Fleming and D.~P. Harrington.
\newblock {\em Counting processes and survival analysis}, volume 169.
\newblock John Wiley \& Sons, 2011.

\bibitem{Guo_2014}
B.-N. Guo and F.~Qi.
\newblock Sharp inequalities for the psi function and harmonic numbers.
\newblock {\em Analysis}, 34(2), 2014.

\bibitem{hawkes1972spectra}
A.~Hawkes.
\newblock Spectra of some mutually exciting point processes with associated
  variables.
\newblock {\em Stochastic point processes}, pages 261--271, 1972.

\bibitem{hawkes1971point}
A.~G. Hawkes.
\newblock Point spectra of some mutually exciting point processes.
\newblock {\em Journal of the Royal Statistical Society: Series B
  (Methodological)}, 33(3):438--443, 1971.

\bibitem{hawkes1971spectra}
A.~G. Hawkes.
\newblock {Spectra of some self-exciting and mutually exciting point
  processes}.
\newblock {\em {Biometrika}}, 58(1):83--90, 1971.

\bibitem{hawkes1974cluster}
A.~G. Hawkes and D.~Oakes.
\newblock {A cluster process representation of a self-exciting process}.
\newblock {\em {Journal of Applied Probability}}, 11(3):493--503, 1974.

\bibitem{kwan2023asymptotic}
J.~Kwan.
\newblock {\em Asymptotic analysis and ergodicity of the Hawkes process and its
  extensions}.
\newblock PhD thesis, UNSW Sydney, 2023.

\bibitem{kwan2022alternative}
T.-K.~J. Kwan, F.~Chen, and W.~T. Dunsmuir.
\newblock Alternative asymptotic inference theory for a nonstationary hawkes
  process.
\newblock {\em Journal of Statistical Planning and Inference}, 227:75--90,
  2023.

\bibitem{kwan2024ergodic}
T.-K.~J. Kwan, F.~Chen, and W.~T. Dunsmuir.
\newblock Ergodic properties of the hawkes process with a general excitation
  kernel.
\newblock {\em Journal of Applied Probability}, 2024.
\newblock To appear.

\bibitem{lauer2020incubation}
S.~A. Lauer, K.~H. Grantz, Q.~Bi, F.~K. Jones, Q.~Zheng, H.~R. Meredith, A.~S.
  Azman, N.~G. Reich, and J.~Lessler.
\newblock The incubation period of coronavirus disease 2019 (covid-19) from
  publicly reported confirmed cases: Estimation and application.
\newblock {\em Annals of Internal Medicine}, 172(9):577--582, 2020.

\bibitem{gould2013limit}
S.~W. M. M. D. J.~F. Martin D.~Gould, Mason A.~Porter and S.~D. Howison.
\newblock Limit order books.
\newblock {\em Quantitative Finance}, 13(11):1709--1742, 2013.

\bibitem{oakes1975markovian}
D.~Oakes.
\newblock The markovian self-exciting process.
\newblock {\em Journal of Applied Probability}, pages 69--77, 1975.

\bibitem{stindl2018likelihood}
T.~Stindl and F.~Chen.
\newblock Likelihood based inference for the multivariate renewal {H}awkes
  process.
\newblock {\em Computational Statistics \& Data Analysis}, 123:131--145, 2018.

\bibitem{stindl2019extreme}
T.~Stindl and F.~Chen.
\newblock Modeling extreme negative returns using marked renewal {H}awkes
  processes.
\newblock {\em Extremes}, 22(4):705--728, 2019.

\bibitem{van2000asymptotic}
A.~W. Van~der Vaart.
\newblock {\em Asymptotic statistics}, volume~3.
\newblock Cambridge university press, 2000.

\end{thebibliography}
\end{document}